\documentclass[11pt]{article}
\usepackage{fullpage}
\usepackage{amsmath,amssymb,amsthm,amsfonts}
\usepackage{mathtools}
\usepackage{hyperref}
\hypersetup{hidelinks}
\allowdisplaybreaks
\theoremstyle{plain}
\newtheorem{theorem}{Theorem}[section]
\newtheorem{proposition}[theorem]{Proposition}
\newtheorem{lemma}[theorem]{Lemma}

\newtheorem{problem}[theorem]{Open Problem}
\theoremstyle{definition}

\theoremstyle{remark}
\newtheorem{remark}[theorem]{Remark}
\newcommand{\C}{\mathbb{C}}
\newcommand{\Q}{\mathbb{Q}}

\title{Counterexamples to the Jacobian conjecture\\ in dimensions greater than two}
\author{Shuhong Gao\thanks{AI disclosure: The main idea and framework are
due to the author, and Claude Fable 5 assisted in the proofs and in the writing
up of the paper.}\\[4pt]
\normalsize School of Mathematical and Statistical Sciences\\
Clemson University, Clemson, SC 29634-0975\\
Email: \texttt{sgao@clemson.edu}}
\date{July 31, 2026}

\begin{document}
\maketitle

\begin{abstract}
The Jacobian conjecture, open since 1939, asks whether every polynomial map of
$\C^n$ whose Jacobian determinant is a nonzero constant must have a polynomial
inverse. It was refuted in dimension three by Alp\"oge on July 19, 2026, with
an infinite family by Gallagher (July 20) and a geometric explanation by
Speyer (July 23): the counterexample sweeps the tangent lines of a plane
curve --- a map that classical duality forces to hit most points several
times. We give a self-contained account of this tangent-sweep mechanism and
generalize it from plane curves to direction fields on hypersurfaces. The
resulting construction produces counterexamples in every dimension greater
than two and, in each dimension, of arbitrarily large geometric degree (the
number of preimages of a typical point). We work it out in five new explicit
maps: one three-dimensional of degree four, two four-dimensional of degrees
five and ten, and two five-dimensional of degrees six and twelve. The
counterexamples provide explicit examples of \'etale coverings
$\C^n\to\C^n$ that are not proper: they are everywhere unramified, and fail
to be injective only through points escaping to infinity. All identities were
verified in exact rational arithmetic; an appendix determines exact fiber
structures through Gr\"obner bases.
\end{abstract}

\section{Introduction}

Let $F\colon\C^n\to\C^n$ be a polynomial map. Throughout this paper we use the
following conventions. The value $F(x)$ is written as a \emph{column} vector of
the $n$ components of $F$; consequently each partial derivative
$\partial_iF:=\partial F/\partial x_i$ is again a column vector, and the Jacobian
matrix of $F$ is
\[
J(F)\;=\;\bigl(\partial_1F,\;\partial_2F,\;\dots,\;\partial_nF\bigr),
\]
the square matrix whose columns are the partial derivatives, with the variables
taken in the order in which they are displayed. More generally, for column
vectors $V_1,\dots,V_k$ of equal length, $(V_1,\dots,V_k)$ denotes the matrix
with these columns; adjoining a column $\Delta$ to a matrix $J$ is written
$(\Delta,J)$. If $F$ has a polynomial inverse, the chain rule forces $\det J(F)$
to be a nonzero constant. The \emph{Jacobian conjecture}, raised by Keller
\cite{Keller1939}, asserts the converse: a polynomial map with
$\det J(F)\in\C^\times$ (a \emph{Keller map}) is a polynomial automorphism. The
problem became widely known through the survey of Bass, Connell and Wright
\cite{BCW1982} and the book of van den Essen \cite{Essen2000}.

On July 19, 2026, the conjecture was refuted in dimension three by an explicit
counterexample announced by L.~Alp\"oge \cite{Alpoge2026}. The next day,
Gallagher \cite{Gallagher2026} gave a uniform one-variable construction
producing, for every integer $d\ge3$, an explicit counterexample of geometric
degree $d$; expositions by Tao (July 21) \cite{Tao2026} and Speyer
(July 23) \cite{Speyer2026} followed. The first
counterexample is a polynomial map $F\colon\C^3\to\C^3$ with
$\det J(F)\equiv-2$ that identifies three distinct points. Its mechanism, isolated
and named the \emph{tangent sweep} by Speyer \cite{Speyer2026}, is strikingly
classical: one sweeps the tangent lines of a plane curve --- a map that
is unavoidably many-to-one, by projective duality --- and then conjugates by
monomial maps so that the Jacobian factor picked up by the sweep is cancelled
exactly. The price of the cancellation is that the ramification of the sweep is
pushed to infinity: the resulting Keller map is everywhere unramified but not
proper, and injectivity fails through escape to infinity, a loophole the Jacobian
hypothesis never excluded.

This paper has several goals. First (\S\ref{sec:sweep}), we give
a self-contained account of the tangent-sweep mechanism in the plane and of the
first counterexample (\S\ref{sec:first}), including a complete description of its
fibers: the generic
fiber has $3$ points, the fiber degenerates to $1$ point over an irreducible
surface $\Sigma$ in the target and to the empty set over a rational curve
$\mathcal{C}\subset\overline{\Sigma}$, and no fiber has exactly $2$ points. The
entire stratification is the projective duality of a cuspidal cubic read through a
monomial twist. Second (\S\ref{sec:deg4}), we construct along the same lines a new
three-dimensional Keller map of geometric degree \emph{four}, based on a rational
quartic curve with two cusps and a node; throughout the paper, the
\emph{geometric degree} of a generically finite polynomial map is the number of
points in its generic fiber, so ``geometric degree $d$'' and ``generic fiber of
exactly $d$ points'' are synonymous. Third (\S\ref{sec:dim4}), we give a general construction, valid in every
dimension $n>2$, based on sweeping tangent direction fields on parametrized
hypersurfaces. The methods are developed in
\S\ref{subsec:sweephyp}--\S\ref{subsec:stage}: prescribed direction fields and
their tangency criterion, formulated as exterior-product identities for any
direction field with unimodular component row, which split the construction
into escape branches (\S\ref{subsec:forms}); and the side conditions of the
monomial twist, organized into the discrete data and the stage equations
(\S\ref{subsec:stage}). Concrete examples follow in dimension four
(\S\ref{subsec:n4}): sweeping a tangent direction field on a suitable
\emph{non-cylindrical ruled surface} in $\C^3$ produces a Keller map
$F_4\colon\C^4\to\C^4$ with $\det J(F_4)\equiv-\tfrac{44}{9}$ and geometric
degree \emph{five} --- since geometric degree is invariant under composition
with polynomial automorphisms, $F_4$ is not equivalent to any product
$\Phi\times\mathrm{id}$ of a three-dimensional Keller map $\Phi$ of geometric
degree $\neq5$ with the identity, in particular not to the trivial extension
of Alp\"oge's example --- while specializing the second ($M$-)branch to the
direction field $(1,w_1,w_2)^{\mathsf T}$ yields a further four-dimensional
counterexample $F_5$ of geometric degree \emph{ten} (\S\ref{subsec:specII}).
The framework is then carried into dimension five (\S\ref{subsec:n5}), where a
new phenomenon appears: the direction field decides not only \emph{which}
escape branch is available but whether a branch is \emph{rigid}. For the mixed
field $\Delta=(1,w_1,w_1^2,w_2)^{\mathsf T}$ --- the first direction field that is
neither curve-like nor tautological --- the top branch vanishes identically,
and we prove that on the middle branch the governing equation degenerates,
on a natural characteristic-invariance locus, to a linear transport equation all
of whose solutions are suspensions of four-dimensional sweeps --- whether the
entire middle branch is rigid is left as an open problem
(Problem~\ref{prob:rigid5}); the bottom
branch, by contrast, is flexible, and we construct from it an explicit
five-dimensional Keller map $F_6$ with $\det J(F_6)\equiv-290$, of geometric
degree six, whose fiber over a punctured coordinate axis has exactly four
points. Finally (\S\ref{subsec:curvefam}), the curve-type
direction field $(1,w_1,\dots,w_1^{n-2})^{\mathsf T}$ admits a \emph{reductive}
construction, uniform in every dimension: its normalization is
\emph{equivalent} to prescribing a
$w_1$-family of constant-Jacobian maps of $\C^{n-3}$ plus one free potential,
its fibers are governed by a single univariate polynomial generalizing the
tangency polynomial of the plane theory, and it yields a second
five-dimensional counterexample $F_7$, of geometric degree twelve; since the
transverse data need only have constant Jacobian, Keller counterexamples in
$n-3$ variables can themselves serve as data, so the construction propagates
its own counterexamples along $n\mapsto n+3$. In particular, the construction
introduced here is not limited to the five explicit maps of this paper: it
produces counterexamples in every dimension $n>2$ and, within each fixed
dimension, of arbitrarily large geometric degree --- for $n=3$ through
Gallagher's family, and for every $n\ge4$ through the curve-type field, whose
free data may be chosen of arbitrarily high degree, the geometric degree being
the degree of the univariate fiber polynomial of
Proposition~\ref{prop:curvefib}.

All polynomial identities asserted below (Jacobian determinants, divisibilities,
fiber counts at rational points) were verified by exact rational arithmetic; the
verification scripts are available from the author. Appendix~\ref{app:fibers}
defines graph ideals and analyzes exact fiber sizes: for $F$ we give the full
(six-element) reduced lexicographic Gr\"obner basis certifying the stratification
of \S\ref{sec:first}; for $G$ we prove a complete fiber-size theorem
(Theorem~\ref{thm:fibG}) showing that $G$ attains \emph{every} fiber size in
$\{4,3,2,1,0\}$; for $F_4$--$F_7$, whose Gr\"obner bases are out of reach, we
summarize the univariate elimination data behind the generic counts, the
complete stratifications being deferred to a subsequent version.

\section{Background and previous work}\label{sec:background}

We recall what was known before 2026; see \cite{Essen2000} for a comprehensive
treatment. Wang \cite{Wang1980} proved the conjecture for maps of degree at most
$2$ in any dimension. Bass--Connell--Wright \cite{BCW1982} and Yagzhev
\cite{Yagzhev1980} showed that it suffices to prove the conjecture for maps of the
form $x+H(x)$ with $H$ cubic homogeneous (in all dimensions simultaneously), and
Dru\.zkowski \cite{Druzkowski1983} reduced further to the case where each
component of $H$ is the cube of a linear form. In dimension two, Moh
\cite{Moh1983} verified the conjecture for maps of degree at most $100$; the
two-dimensional case remains open and is untouched by the counterexamples
discussed here, which exist only in dimension $\ge3$ (by Wang's theorem, degree
$2$ examples are impossible, and the known constructions produce degree $\ge3$).

Two structural facts frame the counterexample. A theorem of
Bia{\l}ynicki-Birula and Rosenlicht \cite{BBR1962} states that an injective
polynomial map $\C^n\to\C^n$ is bijective with polynomial inverse; hence a Keller
counterexample must fail injectivity. Since a Keller map is \'etale (unramified),
it is a local biholomorphism everywhere, so non-injectivity can only occur through
failure of \emph{properness}: distinct preimages cannot collide over any target
point, they can only escape to infinity. The locus of points over which a
polynomial map is not proper was studied by Jelonek \cite{Jelonek1993}, who showed
it is either empty or a hypersurface. In the real category, Pinchuk
\cite{Pinchuk1994} had already constructed a non-injective polynomial map of
$\mathbb{R}^2$ with everywhere nonvanishing (nonconstant) Jacobian, an early
warning that unramifiedness does not force injectivity. The counterexamples below
realize this scenario over $\C$ with \emph{constant} Jacobian: they are \'etale of
geometric degree $d>1$ onto their (open, dense) image, with the fiber count
dropping only across the Jelonek hypersurface.

\section{The tangent sweep of a plane curve}\label{sec:sweep}

In this section we describe the main idea behind Alp\"oge's counterexample and
Gallagher's family \cite{Gallagher2026}, following the \emph{tangent sweep}
picture due to Speyer \cite{Speyer2026}.

\subsection{The sweep and its Jacobian}
Let $p,q\in\C[w]$ with $p$ nonconstant, and let
$\mathcal{K}\colon w\mapsto(p(w),q(w))^{\mathsf T}$ be a parametrized plane curve.
Throughout this section we assume
\[
\deg q\;\le\;\deg p+1 ,
\]
so that the tangent slope $q'/p'$ stays bounded as $w\to\infty$: the tangent
direction of $\mathcal K$ does not degenerate to the vertical at its place at
infinity. The assumption is exactly the compatibility condition for the normal
form below: comparing degrees in the normalization $q'=\tfrac{w}{2}p'$ shows
that any pair satisfying it has $\deg q=\deg p+1$ \emph{exactly}, while a pair
with $\deg q>\deg p+1$ cannot be brought to that form; and it is what makes
the degree and leading-coefficient statements of Lemma~\ref{lem:fibroots}
--- hence the fiber counts --- uniform over all targets.

The \emph{tangent sweep} of $\mathcal{K}$ is the map
sending (point of $\mathcal K$, position along its tangent line) to the plane:
\[
T_0(\gamma,w)=\bigl(p(w)+\gamma p'(w),\;q(w)+\gamma q'(w)\bigr)^{\mathsf T},
\qquad
\det J(T_0)=\gamma\,(p'q''-p''q') ,
\]
the variables being ordered $(\gamma,w)$.
The factor $p'q''-p''q'$ vanishes at inflection points. Normalizing the direction
field to $\delta=(2,w)^{\mathsf T}$, which is tangent exactly when $q'=\tfrac{w}{2}p'$, i.e.
\begin{equation}\label{eq:normalization}
q(w)=\int_0^w \tfrac{s}{2}\,p'(s)\,ds ,
\end{equation}
gives the inflection-free normal form
\[
S(\gamma,w)=\bigl(p(w)+2\gamma,\;q(w)+\gamma w\bigr)^{\mathsf T},\qquad \det J(S)=2\gamma .
\]
Thus the sweep is unramified except over $\gamma=0$, which maps onto the curve
itself, and its Jacobian is a \emph{coordinate}.

\subsection{Fibers and duality}\label{subsec:fibdual}

Fix a target point $(X,Y)\in\C^2$ and solve $S(\gamma,w)=(X,Y)$. The first
coordinate determines the position along the tangent line,
$\gamma=\tfrac12(X-p(w))$; substituting into the second coordinate leaves a
single equation in $w$, the \emph{tangency polynomial}
\begin{equation}\label{eq:tangency}
W_{X,Y}(w)\;=\;q(w)+\tfrac{w}{2}\bigl(X-p(w)\bigr)-Y .
\end{equation}
Its meaning is geometric: the tangent line of $\mathcal K$ at the parameter
value $w$ is the line $\{\mathcal K(w)+\gamma\,(2,w)^{\mathsf T}:\gamma\in\C\}$, and
$W_{X,Y}(w)=0$ says exactly that this line passes through $(X,Y)$. A root
$w_0$ of $W_{X,Y}$ is thus a \emph{parameter of tangency}: it names the point
$\mathcal K(w_0)$ of the curve at which some tangent line through $(X,Y)$
touches $\mathcal K$. (Picture a point outside a parabola: two tangent lines
pass through it, touching the parabola at two points; the two parameter values
naming those points of contact are the two roots of $W_{X,Y}$.)

\begin{lemma}\label{lem:fibroots}
For every $(X,Y)\in\C^2$ the assignment
$w_0\mapsto\bigl(\tfrac12(X-p(w_0)),\,w_0\bigr)$ is a bijection from the set
of distinct roots of $W_{X,Y}$ onto the fiber $S^{-1}(X,Y)$. If $\deg p=d$
then $\deg_wW_{X,Y}=d+1$ with leading coefficient
$-\mathrm{lc}(p)/(2(d+1))$, independent of $(X,Y)$: every fiber of $S$ is
finite of cardinality at most $d+1$, and $S$ is generically $(d+1)$-to-one.
Moreover the normalization \eqref{eq:normalization} gives the exact derivative
identity
\begin{equation}\label{eq:Wprime}
W_{X,Y}'(w)\;=\;\tfrac12\bigl(X-p(w)\bigr)\;=\;\gamma ,
\qquad
W_{X,Y}''(w)\;=\;-\tfrac12\,p'(w) .
\end{equation}
\end{lemma}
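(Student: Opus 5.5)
The plan is to solve $S(\gamma,w)=(X,Y)$ by elimination and then read off everything else from the normalization \eqref{eq:normalization}. For the bijection, fix $(X,Y)$. The first coordinate equation $p(w)+2\gamma=X$ is linear in $\gamma$ with unit-free coefficient $2$, so it is equivalent to $\gamma=\tfrac12(X-p(w))$. Substituting into the second coordinate turns $q(w)+\gamma w=Y$ into exactly $W_{X,Y}(w)=0$. Hence $(\gamma,w)\in S^{-1}(X,Y)$ if and only if $w$ is a root of $W_{X,Y}$ and $\gamma=\tfrac12(X-p(w))$. The map $w_0\mapsto(\tfrac12(X-p(w_0)),w_0)$ is therefore well defined into the fiber and surjective. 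It is injective because the second coordinate recovers $w_0$.

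For the derivative identity, differentiate \eqref{eq:tangency} directly and use $q'=\tfrac w2p'$:
\[
W'_{X,Y}=\tfrac w2p'+\tfrac12(X-p)-\tfrac w2p'=\tfrac12(X-p).
\]
This equals $\gamma$ along the fiber. Differentiating once more gives $W''_{X,Y}=-\tfrac12p'$.

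For the degree, write $p=aw^d+\dots$ with $a=\mathrm{lc}(p)$ and $d\ge1$. From \eqref{eq:normalization}, $q=\tfrac{ad}{2(d+1)}w^{d+1}+\dots$. The term $-\tfrac w2p$ contributes $-\tfrac a2w^{d+1}$, while the terms $\tfrac w2X$ and $-Y$ have degree at most $1\le d$. So the coefficient of $w^{d+1}$ is
\[
\tfrac a2\Bigl(\tfrac d{d+1}-1\Bigr)=-\tfrac{a}{2(d+1)}\neq0,
\]
independent of $(X,Y)$. Consequently $W_{X,Y}$ is a nonzero polynomial of degree exactly $d+1$ for every target, and every fiber has at most $d+1$ points.

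The only step needing an argument beyond bookkeeping is that $S$ is generically $(d+1)$-to-one, i.e.\ that $W_{X,Y}$ has $d+1$ distinct roots for generic $(X,Y)$. I would argue through the discriminant. The leading coefficient is a constant independent of $(X,Y)$, so $\operatorname{disc}_w W_{X,Y}$ is a polynomial in $(X,Y)$, and it suffices to show it is not identically zero. A repeated root $w_0$ means $W(w_0)=W'(w_0)=0$, which by the identity just proved means $\gamma=0$ and $X=p(w_0)$, hence $Y=q(w_0)$. The bad targets thus lie on the image curve $\mathcal K$. This curve is a proper Zariski-closed subset of $\C^2$, since its image is one-dimensional: its Zariski closure is an irreducible curve, and $\mathcal K$ omits only finitely many points of that closure. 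Off $\mathcal K$ all $d+1$ roots are simple, so the fiber has exactly $d+1$ points. This also shows directly that the ramification of $S$ lies over the curve $\{\gamma=0\}$, consistent with $\det J(S)=2\gamma$.
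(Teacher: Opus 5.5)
Your proof is correct and follows essentially the same route as the paper: eliminate $\gamma$ via the first coordinate to obtain $W_{X,Y}$, use $q'=\tfrac w2p'$ to get $W'=\tfrac12(X-p)$ and $W''=-\tfrac12p'$, read off the constant leading coefficient $-\mathrm{lc}(p)/(2(d+1))$, and use $W=W'=0\Rightarrow(X,Y)=\mathcal K(w_0)$ to confine multiple roots to the curve, as in the paper's ``multiple roots lie on the curve'' discussion following the lemma. One small remark: since $p$ is nonconstant the parametrization is finite, so $\mathcal K$ is already Zariski closed and omits no points of its closure, although your genericity argument only needs $\mathcal K$ to lie in a proper closed subset.
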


The identity \eqref{eq:Wprime} --- the derivative of the tangency polynomial
\emph{is} the sweep multiplier --- organizes all fiber degenerations, and is
used repeatedly later:
\begin{itemize}
\item \emph{Multiple roots lie on the curve.} $w_0$ is a multiple root iff
$W=W'=0$ there, i.e.\ iff $\gamma=0$ and $(X,Y)=\mathcal K(w_0)$: the
discriminant locus of $W_{X,Y}$ in the $(X,Y)$-plane is the swept curve
itself (the envelope of its tangent lines). Concretely, for the cuspidal cubic
of \S\ref{sec:first}, where $W_{X,Y}$ is proportional to $w^3-2w^2+Xw-2Y$,
one has the exact identity
$\operatorname{disc}_w\bigl(w^3-2w^2+Xw-2Y\bigr)=-4\,E(X,Y)$ with $E$ the
implicit equation of the cubic --- the sweep-level source of the discriminant
factorizations in Theorem~\ref{thm:strat3} and Appendix~\ref{app:fibers}.
\item \emph{Contact bookkeeping.} A point of $\mathcal K$ sees its own
tangent doubly: at a smooth point $\mathcal K(w_0)$ the root $w_0$ of
$W_{\mathcal K(w_0)}$ has multiplicity exactly $2$ (by \eqref{eq:Wprime},
$W''(w_0)=-\tfrac12p'(w_0)\neq0$). At a cusp ($p'(w_0)=0$, hence also
$q'(w_0)=0$) the multiplicity is $\ge3$: the cusp sees its cuspidal tangent
triply. At a node, \emph{two} parameters $w_1\neq w_2$ are each double roots.
\item \emph{The multiplier detects the point of contact.} A root has
$\gamma=0$ iff the target $(X,Y)$ is the point of tangency itself; these are
exactly the solutions lying on the ramification locus of $S$.
\end{itemize}

Counting distinct roots gives the fiber sizes of the sweep, and --- after the
twist of \S\ref{subsec:twist}, which deletes precisely the solutions with
$\gamma=0$ --- of the Keller maps built from it:

\begin{proposition}[fiber sizes]\label{prop:fibsizes}
Let $\deg p=d$. The fiber of the sweep $S$ over $(X,Y)$ has cardinality
\[
d+1\ \ \text{if }(X,Y)\notin\mathcal K,\qquad
d\ \ \text{at a smooth point of }\mathcal K,\qquad
d-1\ \ \text{at a cusp or a node},
\]
each contact of order $k$ collapsing $k$ simple roots into one multiple root.
The solutions with $\gamma=0$ are exactly the multiple roots, so for the
twisted map the corresponding sheets escape to infinity and the fiber sizes
become
\[
d+1\ \ (\text{generic}),\qquad
d-1\ \ (\text{smooth curve points}),\qquad
d-2\ \ (\text{cusps}),\qquad
d-3\ \ (\text{nodes}),
\]
away from the degeneration loci of the monomial conjugation itself, which can
contribute further strata. Both refinements are visible in the examples: for
$d=2$ (Theorem~\ref{thm:strat3}) the sizes are $3,1,0$ --- no nodes, and no
extra strata; for $d=3$ (Theorem~\ref{thm:fibG}) they are $4,2,1,0$,
\emph{plus} a twist stratum of size $3$ over a hyperplane invisible at sweep
level.
\end{proposition}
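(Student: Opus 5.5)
We reduce everything to counting distinct roots of the tangency polynomial, using Lemma~\ref{lem:fibroots}. That lemma gives $\#S^{-1}(X,Y)$ as the number of distinct roots of $W_{X,Y}$. Since $\deg_w W_{X,Y}=d+1$ with constant leading coefficient, the count equals $d+1$ minus the total excess multiplicity $\sum(m_i-1)$ over the roots $w_i$ of multiplicity $m_i$.

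\textbf{Step 1: locate the multiple roots.} By \eqref{eq:Wprime}, a root $w_0$ is multiple iff $W_{X,Y}'(w_0)=\tfrac12(X-p(w_0))=0$, i.e.\ iff $\gamma=0$. Then $X=p(w_0)$ and $W_{X,Y}(w_0)=q(w_0)-Y=0$, so $(X,Y)=\mathcal K(w_0)$. Hence if $(X,Y)\notin\mathcal K$ all roots are simple and the fiber has $d+1$ points.

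\textbf{Step 2: read off the multiplicities.} If $(X,Y)\in\mathcal K$, the multiple roots are exactly the parameters $w_0$ with $\mathcal K(w_0)=(X,Y)$. By \eqref{eq:Wprime} the multiplicity is $2$ when $p'(w_0)\ne0$, and at least $3$ when $p'(w_0)=0$. The normalization forces $q'(w_0)=0$ as well there, so $w_0$ is a cusp. For an ordinary cusp we show the multiplicity is exactly $3$: $W'''=-\tfrac12p''(w_0)\ne0$, since a simple cusp has $p''(w_0)\neq0$ (otherwise the cusp is higher order). A node has two parameters, each a double root. The counts follow:
\begin{itemize}
\item a smooth point has one double root, giving $d+1-1=d$;
\item a cusp has one triple root, giving $d+1-2=d-1$;
\item a node has two double roots, giving $d+1-2=d-1$.
\end{itemize}
In general, a contact of order $k$ merges $k$ roots into one, which justifies the parenthetical remark. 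We interpret ``cusp'' and ``node'' as ordinary singularities, which is the implicit hypothesis of the statement.

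\textbf{Step 3: the twisted fibers.} From \S\ref{subsec:twist}, we use the stated fact that the twist preserves exactly the solutions with $\gamma\ne0$ and deletes those with $\gamma=0$; we invoke it as given. By Step 1, the deleted solutions are precisely the multiple roots. The twisted fiber is therefore the set of simple roots, counted as follows:
\begin{itemize}
\item generic points keep all $d+1$ roots;
\item a smooth point loses its double root: $d-1$;
\item a cusp loses its triple root: $d+1-3=d-2$;
\item a node loses both double roots: $d+1-4=d-3$.
\end{itemize}
The caveat about additional degeneration loci of the monomial conjugation is precisely where this correspondence $\{\gamma\ne0\}\leftrightarrow$ twisted fiber may fail. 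We state it as an exclusion rather than prove it. The final sentences about $d=2,3$ are forward references, checked in Theorems~\ref{thm:strat3} and \ref{thm:fibG}.

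\textbf{Main obstacle.} The delicate point is Step 3: justifying that the twisted map's fiber corresponds bijectively to the $\gamma\ne0$ sweep solutions away from the conjugation's own exceptional loci. This depends on the structure of the monomial twist, so the honest proof must make that locus explicit. A secondary subtlety is exactness of the cusp multiplicity, which needs the ordinary-cusp hypothesis. A further subtlety is a point that is simultaneously smooth on one branch and a cusp on another; in that case the counts add.
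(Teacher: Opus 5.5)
Your argument is correct and is essentially the paper's own. Lemma~\ref{lem:fibroots} reduces the count to distinct roots of $W_{X,Y}$; the identities $W'=\gamma$ and $W''=-\tfrac12p'$ place the multiple roots on $\mathcal K$ and give their multiplicities (your extra step $W'''=-\tfrac12p''$, which pins an ordinary cusp to multiplicity exactly $3$, sharpens the paper's ``$\ge3$''); and the twist deletes exactly the $\gamma=0$ solutions. The Step~3 correspondence you flag as the main obstacle is actually routine for targets with $C\neq0$: $x=C/\gamma$, $u=w/\gamma$, and the affine stage inverts because $x\neq0$, which is how the paper uses it in Theorem~\ref{thm:fibG}.
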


Behind the generic count stands classical duality: $d+1$ is the \emph{class}
of $\mathcal{K}$, the degree of the dual curve $\mathcal{K}^\vee$, computable
from the Pl\"ucker formula $m=\deg(\deg-1)-2\delta-3\kappa$ for a curve with
$\delta$ nodes and $\kappa$ cusps; the constancy of the leading coefficient of
$W_{X,Y}$ says that no tangency parameter escapes to $w=\infty$ as the target
moves, so the count $d+1$ holds over \emph{every} target, with multiplicity.

\subsection{The monomial twist}\label{subsec:twist}
The sweep is not a Keller map ($\det J(S)=2\gamma$), but the factor $\gamma$ can be
cancelled by conjugation with monomial maps. Pad the sweep to three variables and
compose:
\begin{multline*}
\begin{pmatrix}x\\ y\\ z\end{pmatrix}
\;\xrightarrow{\ \text{monomial}\ }\;
\begin{pmatrix}x\\ xy\\ x^2z\end{pmatrix}
\;\xrightarrow{\ \text{affine}\ }\;
\begin{pmatrix}x\\ \gamma\\ u\end{pmatrix}
\;\xrightarrow{\ w=\gamma u\ }\;
\begin{pmatrix}x\\ \gamma\\ w\end{pmatrix}\\
\xrightarrow{\ \text{padded sweep}\ }\;
\begin{pmatrix}\gamma x\\ P\\ Q\end{pmatrix}
\;\xrightarrow{\ \text{twist}\ }\;
\begin{pmatrix}C\\ P/C\\ Q/C^2\end{pmatrix},
\end{multline*}
where $\gamma=\gamma_0+a\,xy+b\,x^2z$, $u=1+xy$, $P=p(w)+2\gamma$,
$Q=q(w)+\gamma w$ and $C=\gamma x$. The Jacobians multiply to
\[
C^{-3}\cdot 2\gamma^2\cdot\gamma\cdot x^3\cdot(-b)\;=\;-2b,
\]
a nonzero constant. (Permuting the components of the final map changes the sign
of the determinant by the sign of the permutation; this accounts for
$\det J(F)=-2$ below, where the traditional component order reverses the one
displayed here.) The twist requires $C\mid P$ and $C^2\mid Q$; these
divisibilities amount to finitely many \emph{linear} conditions on the
coefficients of $p$ (and on $\gamma_0,a$), the ``side conditions'' of
\cite{Speyer2026}; \S\ref{subsec:stage} systematizes these conditions in every
dimension. Because the
ramification locus $\gamma=0$ of the sweep is exactly where the twisted coordinate
$C=\gamma x$ degenerates, the composite is everywhere unramified; the
$d+1$ sheets of the sweep survive, but sheets that would have merged over the
curve now escape to infinity. \emph{The construction converts ramification into
non-properness.} Varying $d=\deg p$ produces Keller counterexamples of geometric
degree $d+1$ for every $d\ge2$ --- this is Gallagher's family
\cite{Gallagher2026} --- which defeats all approaches to the conjecture through
degree bounds.

\subsection{The first counterexample and its fiber structure}\label{sec:first}

We first run the scheme of \S\ref{subsec:twist} with $\deg p=2$, which recovers
the counterexample of Alp\"oge \cite{Alpoge2026}. The solution of the side
conditions in closed form is
\[
p(w)=-3w^2+4w,\qquad
q(w)=-w^3+w^2,\qquad
\gamma=2-3xy-x^2z,\quad u=1+xy,\quad w=\gamma u
\]
($q$ is \eqref{eq:normalization} for this $p$), and the map, in Alp\"oge's
original component order,
\[
F\;=\;\Bigl(\frac{q(w)+\gamma w}{(\gamma x)^2},\;\;
\frac{p(w)+2\gamma}{\gamma x},\;\;\gamma x\Bigr)^{\mathsf T}\colon\ \C^3\longrightarrow\C^3
\]
(the sweep order of \S\ref{subsec:twist} lists the components in reverse; the
reversal is a transposition of the outer components and multiplies the Jacobian
determinant by $-1$).

\begin{theorem}\label{thm:alpoge}
$F$ is a polynomial map with components of degrees $7,6,4$; its Jacobian
determinant is identically $-2$; and its generic fiber consists of exactly
$3$ points.
\end{theorem}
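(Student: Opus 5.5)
The plan is to verify the three claims of Theorem~\ref{thm:alpoge} in turn: polynomiality, the Jacobian, and the generic fiber. Each one is read off from the factorization of $F$ through the chain of \S\ref{subsec:twist}, with $\gamma_0=2$, $a=-3$, $b=-1$.

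\emph{Polynomiality.} Put $t=xy$ and $s=x^2z$, so that $\gamma=2-3t-s$ and $u=1+t$. With $w=\gamma u$ one gets
\[
P=p(w)+2\gamma=\gamma\bigl(-3\gamma u^2+4u+2\bigr),\qquad
Q=q(w)+\gamma w=\gamma^2\bigl(-\gamma u^3+u^2+u\bigr).
\]
Hence $C=\gamma x$ divides $P$ exactly when $x\mid -3\gamma u^2+4u+2$. Reducing modulo $x$ sets $t=s=0$, $\gamma=2$, $u=1$, and gives $-6+4+2=0$.

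Similarly, $C^2\mid Q$ exactly when $x^2\mid R:=-\gamma u^3+u^2+u$. Modulo $x^2$ only the constant and $t$-linear parts of $R$ survive, since $s$ and $t^2$ are $O(x^2)$. The constant part is $-2+1+1=0$. The $t$-linear part is $-(2\cdot3-3)+2+1=0$. So both quotients are polynomials. The degrees $7,6,4$ are then obtained by tracking the top-degree monomials of $\gamma$, $u$ and $R/x^2$ in $x,y,z$; this is routine bookkeeping.

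\emph{Jacobian.} On the dense open set $\{x\gamma\neq0\}$ every map in the chain of \S\ref{subsec:twist} is a local isomorphism. The chain rule therefore gives, in sweep order,
\[
\det J=C^{-3}\cdot2\gamma^2\cdot\gamma\cdot x^3\cdot(-b)=2
\]
for $b=-1$. Reversing the components to Alp\"oge's order multiplies the determinant by $-1$, giving $-2$. Since $\det J(F)$ is a polynomial that equals $-2$ on a dense set, it equals $-2$ identically.

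\emph{Generic fiber.} Fix a target $(Y_1,Y_2,c)$ with $c\neq0$ and set $(X,Y)=(cY_2,\,c^2Y_1)$. I claim that points of $F^{-1}(Y_1,Y_2,c)$ correspond bijectively to points $(\gamma,w)\in S^{-1}(X,Y)$ with $\gamma\neq0$.

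In one direction, such a $(\gamma,w)$ determines $x=c/\gamma$, $u=w/\gamma$, $y=(u-1)/x$, and then $z$ from $\gamma=2-3xy-x^2z$; all of these are well defined because $x\neq0$. In the other direction, a point of the fiber has $\gamma x=c\neq0$, so $\gamma\neq0$ and $x\neq0$, and the same formulas invert the assignment.

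The map $(Y_1,Y_2,c)\mapsto(cY_2,c^2Y_1)$ is dominant, so for generic targets $(X,Y)$ is a generic point of $\C^2$ and in particular lies off $\mathcal K$. By Lemma~\ref{lem:fibroots} with $d=2$, the tangency polynomial $W_{X,Y}$ then has $3$ distinct roots. By \eqref{eq:Wprime}, a root with $\gamma=W'=0$ would be a multiple root, so all three roots have $\gamma\neq0$. The generic fiber of $F$ therefore has exactly $3$ points.

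The main obstacle is this last step, specifically making the bijection airtight. One must check that the inverse formulas reproduce the given $\gamma$ and $u$, and that genericity of $(Y_1,Y_2,c)$ really transfers to genericity of $(X,Y)$. The divisibility and Jacobian checks are mechanical by comparison.
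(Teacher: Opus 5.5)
Your proposal is correct and follows essentially the paper's own argument: the explicit factorizations $P=\gamma(-3\gamma u^2+4u+2)$ and $Q=\gamma^2(-\gamma u^3+u^2+u)$ for the divisibilities, the chain factorization $-2b=2$ with the sign $-1$ from the component reversal, and the tangency cubic with constant leading coefficient, with the twist deleting exactly the $\gamma=0$ roots (your lifting $x=c/\gamma$, $u=w/\gamma$ is the one the paper writes out for $G$ in Theorem~\ref{thm:fibG}). Your check that the $t$-linear coefficient of $-\gamma u^3+u^2+u$ vanishes is what $C^2\mid Q$ actually requires; it is more careful than the paper's remark that the bracket vanishes at $x=0$, which only gives divisibility by $x$.
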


A few comments on the ingredients. In the sweep order the determinant is
$-2b=2$ with $b=-1$, and the component reversal contributes the sign $-1$.
The divisibilities required by the twist are explicit:
$p(w)+2\gamma=\gamma\bigl(-3\gamma u^2+4u+2\bigr)$ and
$q(w)+\gamma w=\gamma^2\bigl(-\gamma u^3+u^2+u\bigr)$, and both bracketed
factors vanish at $x=0$. The tangency
polynomial \eqref{eq:tangency} is, up to a constant factor, the cubic
\[
w^3-2w^2+Xw-2Y ,
\]
whose leading coefficient is constant, so every target point admits three
tangency parameters counted with multiplicity.

Written out in full --- matching the form in \cite{Speyer2026,Tao2026} --- the
map is $F=(f_1,f_2,f_3)^{\mathsf T}$ with $f_3=\gamma x$,
$f_2f_3=p(w)+2\gamma$, $f_1f_3^{\,2}=q(w)+\gamma w$:
\begin{align*}
f_1 &= (1+xy)^3 z + y^2(1+xy)(4+3xy),\\
f_2 &= y + 3x(1+xy)^2 z + 3xy^2(4+3xy),\\
f_3 &= 2x - 3x^2y - x^3 z ,
\end{align*}
and
$F(0,0,-\tfrac14)=F(1,-\tfrac32,\tfrac{13}{2})=F(-1,\tfrac32,\tfrac{13}{2})
=(-\tfrac14,0,0)$: three distinct points with a common image. The underlying
curve $\mathcal{K}=(p,q)^{\mathsf T}$ is a rational \emph{cuspidal cubic} with one cusp, at
$(p,q)(\tfrac23)=(\tfrac43,\tfrac4{27})$, and no node; the Pl\"ucker formula
gives class $m=3\cdot2-3\cdot1=3$, matching the fiber count.

The fibers of $F$ are completely described by the following stratification, which
we established by exact computation (a lexicographic Gr\"obner basis of the graph
ideal, together with symbolic lifting certificates over the function fields of the
strata).

\begin{theorem}\label{thm:strat3}
Let $c_3(v)=27v_1^2v_3^2-18v_1v_2v_3+16v_1+v_2^3v_3-v_2^2$ and
$a_2(v)=54v_1v_3^2-18v_2v_3+16=\partial c_3/\partial v_1$, and let
$\sigma(v)=(v_2v_3,\,v_1v_3^2)^{\mathsf T}$. Then:
\begin{enumerate}
\item $\operatorname{disc}_x$ of the elimination cubic of the graph ideal equals
$-c_3a_2^2$, and $a_2^2-4(4-3v_2v_3)^3=108v_3^2c_3$;
\item $v_3^2c_3=E\circ\sigma$ where $E(X,Y)=X^3-X^2-18XY+16Y+27Y^2$ is the
implicit equation of $\mathcal{K}$, and $a_2$ is (a constant multiple of) the
equation of the cuspidal tangent line pulled back by $\sigma$;
\item the fiber of $F$ over $v$ has exactly $3$ points if $c_3(v)\ne0$ (with two
of them sharing their $x$-coordinate precisely when $a_2(v)=0$), exactly $1$
point if $c_3(v)=0$ and $v\notin\mathcal{C}$, and is empty if
$v\in\mathcal{C}=\{(\tfrac4{27}t^{-2},\tfrac43t^{-1},t):t\ne0\}$, the
$\sigma$-preimage of the cusp. In particular no fiber has exactly $2$ points, the
image of $F$ is $\C^3\setminus\mathcal{C}$, and the Jelonek non-properness set is
the irreducible surface $\{c_3=0\}=\sigma^{-1}(\mathcal{K})$.
\end{enumerate}
\end{theorem}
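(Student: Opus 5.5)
Rather than attacking the graph ideal head-on, I would get everything from the sweep picture of \S\ref{subsec:fibdual}, read through the twist. The key observation is that $F$ factors through $\sigma$. If $v=F(x,y,z)$ then $v_3=C=\gamma x$, $v_2v_3=P$ and $v_1v_3^2=Q$, so $S(\gamma,w)=\sigma(v)$. When $v_3\neq0$, a point of $F^{-1}(v)$ is therefore the same thing as a solution $(\gamma,w)$ of $S(\gamma,w)=\sigma(v)$ with $\gamma\neq0$, together with a solution of $\gamma x=v_3$, $1+xy=w/\gamma$, $2-3xy-x^2z=\gamma$. Given $(\gamma,w)$ with $\gamma\ne0$ and $v_3\ne0$, these equations recover $x=v_3/\gamma$, then $y$, then $z$, all uniquely, since $x\neq0$.

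By Lemma~\ref{lem:fibroots} and \eqref{eq:Wprime}, such solutions correspond to the roots $w_0$ of the cubic $W_{\sigma(v)}\propto w^3-2w^2+Xw-2Y$ with $W'(w_0)\ne0$, that is, to its simple roots. Away from $v_3=0$ this gives the counts directly:
\begin{itemize}
\item three points when $E(\sigma(v))\ne0$;
\item one point at a smooth point of $\mathcal K$, where there is one double root and one simple root;
\item zero points at the cusp, where there is a triple root.
\end{itemize}
This is Proposition~\ref{prop:fibsizes} with $d=2$. The stated identity $\operatorname{disc}_w=-4E$ links this to item 2: I would verify $v_3^2c_3=E(v_2v_3,v_1v_3^2)$ by expanding, and $\sigma^{-1}(\text{cusp})$ is the curve $v_2v_3=\tfrac43$, $v_1v_3^2=\tfrac4{27}$, which is exactly $\mathcal C$. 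For the cuspidal tangent line I would compute the tangent direction $(p',q')$ at $w=\tfrac23$ by l'Hôpital; it is proportional to $(2,\tfrac23)$. The tangent line is then the affine line through $(\tfrac43,\tfrac4{27})$, and pulling it back by $\sigma$ and dividing by $v_3$ should give a multiple of $a_2$. The identity $a_2^2-4(4-3v_2v_3)^3=108v_3^2c_3$ is a direct polynomial check.

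The main obstacle is the plane $v_3=0$, where $\sigma$ collapses everything and the factorization says nothing. Here I would argue directly. $F^{-1}(v)$ with $v_3=0$ forces $x=0$ or $\gamma=0$.
\begin{itemize}
\item If $x=0$, then $F=(z,y,0)$, which gives exactly one point.
\item If $\gamma=0$ with $x\ne0$, then $w=0$. Now $f_2,f_1$ are the cofactors $-3\gamma u^2+4u+2$ and $-\gamma u^3+u^2+u$ evaluated at $\gamma=0$, divided by $x$ and $x^2$. With $u=1+xy$ they are $(4xy+6)/x$ and $(1+xy)(2+xy)/x^2$. Equating these to $v_2$ and $v_1$ and eliminating $xy$ gives a relation between $v_1,v_2$ and $x$.
\end{itemize}
I expect this to produce the extra roots matching the $c_3$ count on the plane $v_3=0$, where $c_3=16v_1-v_2^2$. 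This degenerate plane is the reason the paper cites a Gröbner basis.

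For item 1, I would form the elimination polynomial in $x$ of the graph ideal, which the analysis above predicts is a cubic in $x$ (roots $x=v_3/\gamma_i$). Computing its discriminant then gives $-c_3a_2^2$, presumably by exact computation. The clause about two points sharing $x$ when $a_2=0$ follows because distinct simple roots have distinct $\gamma_i$ unless $a_2=0$. Finally, the image is $\C^3\setminus\mathcal C$. The Jelonek set is where the count drops, namely $\{c_3=0\}$. This set is irreducible because it is the closure of $\sigma^{-1}(\mathcal K)$ over $v_3\ne0$, and $\sigma$ restricted to $v_3\neq0$ is a $\C^\times$-bundle over the irreducible curve $\mathcal K$.
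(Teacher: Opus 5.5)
Your argument is correct in substance, but it takes a different route from the paper's proof of this theorem.

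\textbf{How the two proofs differ.} The paper never uses the sweep in its proof of this theorem. It computes and independently certifies the six-element reduced lexicographic Gr\"obner basis of $J_F$ (Appendix~\ref{app:F}). It then identifies $h_1=c_3x^3+(4-3v_2v_3)x-2v_3$ as the generator of $J_F\cap\Q[x,v]$. The counts come from $\operatorname{disc}_xh_1=-c_3a_2^2$ together with three lifting certificates: in $\Q(v)[x]/(h_1)$, over $\{a_2=0\}$, and over $\{c_3=0\}$. The contact-order picture appears there only as an after-the-fact explanation. You instead factor through $\sigma$, count simple roots of the tangency cubic for $v_3\neq0$, and treat $v_3=0$ by hand. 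This is exactly the method the paper uses for $G$ in Theorem~\ref{thm:fibG}.

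\textbf{What your route buys.} It needs no Gr\"obner basis, and it makes conceptual even the parts you left as assertions. Put $(X,Y)=\sigma(v)$ and let $w_1,w_2,w_3$ be the tangency parameters. Then $\gamma_i-\gamma_j=\tfrac12(w_j-w_i)(3w_k-2)$. So two fiber points share $x=v_3/\gamma$ iff the remaining parameter is the cusp parameter $\tfrac23$, i.e.\ iff $W_{\sigma(v)}(\tfrac23)=0$, i.e.\ iff $a_2(v)=0$. This is the proof of your unjustified ``distinct $\gamma_i$ unless $a_2=0$''. Moreover, combining $\prod_{i<j}(w_i-w_j)^2=-4E$, $\prod_k(3w_k-2)=a_2$ and $\prod_k\gamma_k=E/2$ yields $\operatorname{disc}_xh_1=-c_3a_2^2$ by hand.

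\textbf{What the paper's route buys.} It gives a certified generator of the elimination ideal, which item~1 presupposes. On your side you must still produce it. One way is $h_1=-x^3\Gamma(v_3/x)/v_3^2$, where $\Gamma(\gamma)=2\gamma^3-(4-3X)\gamma^2-E$ is the cubic satisfied by the $\gamma_k$. You then check that $h_1$ lies in $J_F$ and is primitive of $x$-degree $3$, which forces it to generate.

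\textbf{Local slips to repair.}
\begin{enumerate}
\item $F(0,y,z)=(z+4y^2,\,y,\,0)$, not $(z,y,0)$. The $x=0$ branch still contributes exactly one point.
\item The $\gamma=0$ branch on $v_3=0$, which you left at ``I expect'', closes in one line. With $s=xy$ one has $v_2x=4s+6$ and $16v_1x^2=16(1+s)(2+s)=(v_2x-2)(v_2x+2)$, so $(16v_1-v_2^2)x^2=-4$. Since $c_3(v_1,v_2,0)=16v_1-v_2^2$, this branch gives two points if $c_3\neq0$ and none otherwise. The totals are therefore $3$ and $1$, with three distinct $x$-values, consistent with $a_2=16$ there.
\item The cuspidal tangent line is $\{W_{X,Y}(\tfrac23)=0\}=\{54Y-18X+16=0\}$. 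Its pullback by $\sigma$ is $a_2$ itself; nothing is divided by $v_3$ (that happens only for $E$).
\item For irreducibility you also need $v_3\nmid c_3$, which the restriction in item 2 shows. Note that $\sigma^{-1}(\mathcal K)$ literally contains the plane $v_3=0$, because $\sigma(v_1,v_2,0)=(0,0)=\mathcal K(0)$. So your reading of it as the closure of the part over $v_3\neq0$ is the right one.
\item ``Jelonek set $=$ locus where the count drops'' deserves a line. $F$ is \'etale and every fiber has at most $3$ points. A full fiber at $v$ therefore leaves no room for escaping sheets nearby. Conversely, properness near $v$ would make $F$ a covering there, with locally constant count.
\end{enumerate}
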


The mechanism is the contact geometry of
Proposition~\ref{prop:fibsizes} (here $d=2$: sizes $3,1,0$): crossing
$\sigma^{-1}(\mathcal K)$, a double contact makes two sheets escape together
(whence the exact square $a_2^2$ in the discriminant, and the impossibility of
fiber size $2$); over the cusp the contact is triple and all three sheets escape.

\subsection{A three-dimensional example of geometric degree four}\label{sec:deg4}

We now run the scheme of \S\ref{subsec:twist} with $\deg p=3$. Gallagher's
family \cite{Gallagher2026} contains examples of every geometric degree
$n\ge3$, including a degree-four instance; the example below was obtained
independently by solving the side conditions directly, and is recorded here with
its verification data. The solution in closed form is:
\[
p(w)=w^3-6w^2+6w,\qquad
q(w)=\tfrac38 w^4-2w^3+\tfrac32 w^2,\qquad
\gamma=2-4xy-x^2z,\quad u=1+xy,\quad w=\gamma u ,
\]
($q$ is \eqref{eq:normalization} for this $p$), and the map
\[
G\;=\;\Bigl(\gamma x,\;\;\frac{p(w)+2\gamma}{\gamma x},\;\;
\frac{q(w)+\gamma w}{(\gamma x)^2}\Bigr)^{\mathsf T}\colon\ \C^3\longrightarrow\C^3 .
\]

\begin{theorem}\label{thm:deg4}
$G$ is a polynomial map with components of degrees $4,11,12$; its Jacobian
determinant is identically $2$; and its generic fiber consists of exactly
$4$ points.
\end{theorem}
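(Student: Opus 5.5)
The plan is to verify the three assertions in order: polynomiality (the twist's divisibilities), the constant Jacobian (the chain-rule product of \S\ref{subsec:twist}), and the generic fiber count (Lemma~\ref{lem:fibroots} with $d=3$).

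\textbf{Polynomiality.} Write $h_1(\gamma,u)=\gamma^2u^3-6\gamma u^2+6u+2$ and $h_2(\gamma,u)=\tfrac38\gamma^2u^4-2\gamma u^3+\tfrac32u^2+u$. Substituting $w=\gamma u$ into $p$ and into $q$ given by \eqref{eq:normalization} yields the factorizations
\[
p(w)+2\gamma=\gamma\,h_1(\gamma,u),\qquad q(w)+\gamma w=\gamma^2\,h_2(\gamma,u).
\]
It therefore suffices to show that $x\mid h_1(\gamma,u)$ and $x^2\mid h_2(\gamma,u)$ as polynomials in $x,y,z$, where $\gamma=2-4xy-x^2z$ and $u=1+xy$.

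At $x=0$ we have $(\gamma,u)=(2,1)$. There $h_1=4-12+6+2=0$ and $h_2=\tfrac32-4+\tfrac32+1=0$, so both vanish to first order. For the second-order condition on $h_2$, note that $\partial_x\gamma|_{x=0}=-4y$ and $\partial_xu|_{x=0}=y$. At $(2,1)$ one computes $\partial_\gamma h_2=\tfrac34\gamma u^4-2u^3=-\tfrac12$ and $\partial_uh_2=\tfrac32\gamma^2u^3-6\gamma u^2+3u+1=-2$. Hence
\[
\partial_x h_2\big|_{x=0}=(-\tfrac12)(-4y)+(-2)y=0,
\]
so $x^2\mid h_2$. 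Thus $G$ is polynomial. The component degrees $4,11,12$ follow by tracking top-degree terms: the top-degree part of $\gamma$ is $-x^2z$, and that of $u$ is $xy$. I would confirm that the leading forms do not cancel, which is a routine check.

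\textbf{Jacobian.} $G$ is exactly the composite of \S\ref{subsec:twist}, listed in sweep order $(C,P/C,Q/C^2)$. The Jacobians multiply to $C^{-3}\cdot2\gamma^2\cdot\gamma\cdot x^3\cdot(-b)=-2b$. With $b=-1$ this gives $\det J(G)=2$, and no permutation sign arises.

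\textbf{Generic fiber.} Take a target $v$ with $v_1\neq0$ and set $(X,Y)=(v_1v_2,\,v_1^2v_3)$. Any preimage $(x,y,z)$ gives a sweep solution $S(\gamma,w)=(X,Y)$ with $\gamma x=v_1$, so $\gamma\neq0$ and $x\neq0$.

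Conversely, a sweep solution $(\gamma,w)$ with $\gamma\neq0$ determines the preimage uniquely: $x=v_1/\gamma$, $u=w/\gamma$, $y=(u-1)/x$, and $z=(2-4xy-\gamma)/x^2$. Then $G(x,y,z)=v$ because $C=v_1$, $P/C=X/v_1=v_2$ and $Q/C^2=v_3$. So the fiber of $G$ over $v$ is in bijection with the roots of $W_{X,Y}$ whose $\gamma=W'_{X,Y}$ is nonzero.

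By Lemma~\ref{lem:fibroots} with $d=3$, $W_{X,Y}$ has degree $4$ with constant leading coefficient. Its roots are distinct with $\gamma\neq0$ exactly when $(X,Y)\notin\mathcal K$, by the multiple-root discussion after \eqref{eq:Wprime}. The map $v\mapsto(X,Y)$ is dominant on $\{v_1\neq0\}$, so for generic $v$ the point $(X,Y)$ avoids $\mathcal K$ and the fiber has exactly $4$ points.

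\textbf{Main obstacle.} The only delicate points are the second-order divisibility $x^2\mid h_2$ and bookkeeping the degrees. The first is settled by the derivative computation above; for the second I would simply expand the highest-degree terms.
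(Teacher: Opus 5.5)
Your proposal is correct and follows the paper's own route. It uses the same factorizations $p(w)+2\gamma=\gamma h_1$ and $q(w)+\gamma w=\gamma^2h_2$, the chain factorization $-2b=2$ with $b=-1$, and the bijection between fiber points over $v_1\neq0$ and roots of the tangency quartic with $\gamma\neq0$; your inverse formulas agree with those of Theorem~\ref{thm:fibG}.

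Your first-order check $\partial_xh_2|_{x=0}=0$ is a genuine improvement in care. The paper's remark that the bracketed factors vanish at $x=0$ only yields $x\mid h_2$, whereas $Q/C^2$ needs $x^2\mid h_2$. That is exactly the $r=1$ side condition of \S\ref{subsec:stage}, which you verify.
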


Here $b=-1$ and the components are in the displayed order, so
$\det J(G)=-2b=2$. The divisibilities are again explicit:
$p(w)+2\gamma=\gamma\bigl(6u-6\gamma u^2+\gamma^2u^3+2\bigr)$
and $q(w)+\gamma w=\gamma^2\bigl(\tfrac38\gamma^2u^4-2\gamma u^3+\tfrac32
u^2+u\bigr)$, and both bracketed factors vanish at $x=0$. The tangency
polynomial \eqref{eq:tangency} is the quartic
\[
\tfrac14 w^4-2w^3+3w^2-Xw+2Y ,
\]
whose leading coefficient is constant, so every target point admits four tangency
parameters counted with multiplicity.

Written out in full, $G=(g_1,g_2,g_3)^{\mathsf T}$ with
{\small
\begin{align*}
g_1 ={}& -x^3z-4x^2y+2x\\[2pt]
g_2 ={}& x^6y^3z^2+8x^5y^4z+3x^5y^2z^2+16x^4y^5+20x^4y^3z+3x^4yz^2+32x^3y^4+18x^3y^2z+x^3z^2\\
&+28x^2y^3+8x^2yz+16xy^2+2xz+2y\\[2pt]
g_3 ={}& \tfrac{3}{8}x^6y^4z^2+3x^5y^5z+\tfrac{3}{2}x^5y^3z^2+6x^4y^6+\tfrac{21}{2}x^4y^4z\\
&+\tfrac{9}{4}x^4y^2z^2+18x^3y^5+14x^3y^3z+\tfrac{3}{2}x^3yz^2+\tfrac{43}{2}x^2y^4\\
&+9x^2y^2z+\tfrac{3}{8}x^2z^2+14xy^3+3xyz+\tfrac{9}{2}y^2+\tfrac{1}{2}z
\end{align*}
}
All statements were verified exactly; the fiber count $4$ was confirmed at random
rational targets. The underlying curve $(p,q)^{\mathsf T}$ is a rational quartic with
\emph{two} cusps, at $w=2\pm\sqrt2$ (the common roots of $p'$ and $q'$), and, by
the genus count $\delta+\kappa=3$ for a rational quartic, one node; the Pl\"ucker
formula gives class $m=4\cdot3-2\cdot1-3\cdot2=4$, matching the fiber count. The
richer singularity structure of the curve produces a richer stratification than in
Theorem~\ref{thm:strat3}, which is worked out completely in
Appendix~\ref{app:G} (Theorem~\ref{thm:fibG}), and follows the pattern of
Proposition~\ref{prop:fibsizes} with $d=3$: double contact over smooth curve
points gives fibers of size $2$, the two cusps give size $1$, the node --- where
two branches each impose a double contact --- gives \emph{empty} fibers, and over
the hyperplane $v_1=0$, where the elimination quartic degenerates, the fiber has
$3$ points (or $1$ on a distinguished conic) --- the twist stratum invisible at
sweep level. Thus $G$ attains every fiber size in $\{4,3,2,1,0\}$.

\section{A general construction from parametrized hypersurfaces}\label{sec:dim4}

In this section we propose a general framework, valid in any dimension $n>2$:
one sweeps a tangent direction field on a parametrized hypersurface of dimension
$n-2$ in $\C^{n-1}$, pads by one multiplier variable, and twists by monomial
maps. The framework is set up in \S\ref{subsec:sweephyp} (where the branch
coefficients $L_k$ are listed explicitly for $n=4$ and $n=5$) and
\S\ref{subsec:forms} (which treats general $n$ only). We then specialize:
\S\ref{subsec:n4} is devoted to dimension four, with two counterexamples
$F_4$ and $F_5$, and \S\ref{subsec:n5} to dimension five, where the direction
field $\Delta=(1,w_1,w_1^2,w_2)^{\mathsf T}$ leads to a rigidity theorem for the middle
escape branch and, on the bottom branch, to a five-dimensional counterexample
$F_6$. Finally, \S\ref{subsec:curvefam} solves the curve-type direction field
$\Delta=(1,w_1,\dots,w_1^{n-2})^{\mathsf T}$ uniformly in every dimension --- the
normalization reduces to a $w_1$-family of constant-Jacobian maps of
$\C^{n-3}$ together with a free potential --- and produces a second
five-dimensional counterexample $F_7$, of geometric degree twelve.

\subsection{Sweeping a direction field on a hypersurface}\label{subsec:sweephyp}
The architecture sweeps a \emph{single} tangent direction field on a
parametrized hypersurface, and we set it up at once for arbitrary dimension
$n\ge3$. Let $w=(w_1,\dots,w_{n-2})$, let $X\in\C[w]^{\,n-1}$ be a column vector
of $n-1$ polynomials, viewed as a parametrized hypersurface
$X\colon\C^{n-2}\to\C^{n-1}$, and write $\partial_i=\partial/\partial w_i$, so
that $J(X)=(\partial_1X,\dots,\partial_{n-2}X)$ is an
$(n-1)\times(n-2)$ matrix whose column span is the tangent space of $X$ at $w$.
A \emph{polynomial tangent field} is a column vector $\Delta\in\C[w]^{\,n-1}$
with $\Delta(w)$ in the column span of $J(X)(w)$ for all $w$, equivalently the
\emph{tangency criterion}
\begin{equation}\label{eq:tangcrit}
\det\bigl(\Delta,\,J(X)\bigr)\;\equiv\;0 ,
\end{equation}
where $(\Delta,J(X))$ is the square matrix obtained by adjoining the column
$\Delta$. Under the standard identification
$\Lambda^{n-1}\C^{\,n-1}\cong\C$, the left-hand side is the exterior product
$\Delta\wedge\partial_1X\wedge\dots\wedge\partial_{n-2}X$ --- the
exterior-product form of the criterion referred to in the introduction. The padded sweep is
\[
F_0(x,\gamma,w)\;=\;\begin{pmatrix}\gamma x\\ X(w)+\gamma\Delta(w)\end{pmatrix}\colon
\ \C^n\to\C^n .
\]

\begin{lemma}\label{lem:JL}
Let $\Delta$ be a polynomial tangent field of $X$, i.e.\
$\det\bigl(\Delta,J(X)\bigr)\equiv0$. Then, with the variables ordered
$(x,\gamma,w_1,\dots,w_{n-2})$,
\[
\det J(F_0)\;=\;\gamma\,\det\bigl(\Delta,\;J(X)+\gamma J(\Delta)\bigr)
\;=\;\sum_{k=1}^{n-2}L_k\,\gamma^{\,k+1},
\]
where $L_k$ is the sum of the determinants $\det(\Delta,C_1,\dots,C_{n-2})$ in
which exactly $k$ of the columns $C_i$ are taken from $J(\Delta)$ and the rest
from $J(X)$.
\end{lemma}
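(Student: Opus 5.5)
We compute the Jacobian matrix of $F_0$ directly in block form and then expand by multilinearity. With the variables ordered $(x,\gamma,w_1,\dots,w_{n-2})$, the first row of $J(F_0)$ is $(\gamma,\,x,\,0,\dots,0)$. The remaining $n-1$ rows form the block
\[
\bigl(0,\;\Delta,\;J(X)+\gamma J(\Delta)\bigr),
\]
because $\partial_x(X+\gamma\Delta)=0$, $\partial_\gamma(X+\gamma\Delta)=\Delta$, and $\partial_i(X+\gamma\Delta)=\partial_iX+\gamma\,\partial_i\Delta$. The first column is therefore $(\gamma,0,\dots,0)^{\mathsf T}$. Expanding the determinant along this column gives
\[
\det J(F_0)=\gamma\,\det\bigl(\Delta,\,J(X)+\gamma J(\Delta)\bigr),
\]
which is the first equality. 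There is no sign issue, since the entry sits in position $(1,1)$.

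For the second equality we expand $\det(\Delta,\,\partial_1X+\gamma\partial_1\Delta,\dots,\partial_{n-2}X+\gamma\partial_{n-2}\Delta)$ by multilinearity in the last $n-2$ columns. This produces a sum over subsets $I\subseteq\{1,\dots,n-2\}$. Each term is $\gamma^{|I|}$ times the determinant in which column $i$ is $\partial_i\Delta$ for $i\in I$ and $\partial_iX$ otherwise. Grouping the terms by $k=|I|$ gives $\sum_{k=0}^{n-2}\gamma^kL_k$, with $L_k$ exactly as described in the statement; the columns keep their positions, so again there are no signs. Multiplying by the prefactor $\gamma$ turns this into $\sum_{k=0}^{n-2}L_k\gamma^{k+1}$.

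The only substantive point is removing the $k=0$ term. That term is $L_0=\det(\Delta,J(X))$, which vanishes identically by the tangency criterion \eqref{eq:tangcrit}. This leaves the sum over $1\le k\le n-2$. I expect no real obstacle. The only care needed is to keep the column bookkeeping consistent with the paper's convention that $J$ lists the partial derivatives as columns in the displayed variable order, so that no stray sign enters the cofactor expansion. As a sanity check, for $n=3$ with $\Delta=\delta=(2,w)^{\mathsf T}$ the formula gives $L_1\gamma^2$ with $L_1=\det(\delta,\partial_w\delta)=2$. This agrees with $\gamma\cdot\det J(S)=2\gamma^2$ from \S\ref{sec:sweep}.
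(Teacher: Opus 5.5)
Your proof is correct and is the direct expansion the paper relies on. The paper gives no written argument for Lemma~\ref{lem:JL} beyond ``verified by exact expansion''; your cofactor expansion along the first column $(\gamma,0,\dots,0)^{\mathsf T}$, followed by the multilinear expansion in the last $n-2$ columns with $L_0=\det(\Delta,J(X))\equiv0$ removed by tangency, is that computation made explicit for general $n$ (and your $n=3$ check $L_1=2$ matches the entry $\det J(F_0)=2\gamma^2$ in the table of \S\ref{subsec:stage}).
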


We record the coefficients explicitly in the two dimensions used below. For
$n=4$ (parameters $w=(w_1,w_2)$, columns in $\C[w]^3$) we write $L:=L_1$ and
$M:=L_2$:
\[
\det J(F_0)=\gamma^2L+\gamma^3M,\qquad
L=\det(\Delta,\partial_1X,\partial_2\Delta)+\det(\Delta,\partial_1\Delta,\partial_2X),
\qquad
M=\det(\Delta,\partial_1\Delta,\partial_2\Delta).
\]
For $n=5$ (parameters $w=(w_1,w_2,w_3)$, columns in $\C[w]^4$),
$\det J(F_0)=\gamma^2L_1+\gamma^3L_2+\gamma^4L_3$ with
\begin{align*}
L_1&=\det(\Delta,\partial_1\Delta,\partial_2X,\partial_3X)
   +\det(\Delta,\partial_1X,\partial_2\Delta,\partial_3X)
   +\det(\Delta,\partial_1X,\partial_2X,\partial_3\Delta),\\
L_2&=\det(\Delta,\partial_1\Delta,\partial_2\Delta,\partial_3X)
   +\det(\Delta,\partial_1\Delta,\partial_2X,\partial_3\Delta)
   +\det(\Delta,\partial_1X,\partial_2\Delta,\partial_3\Delta),\\
L_3&=\det(\Delta,\partial_1\Delta,\partial_2\Delta,\partial_3\Delta).
\end{align*}
Note that $L_{n-2}$ involves $\Delta$ alone, and more generally the higher
coefficients are dominated by derivatives of $\Delta$: which of them can be made
a nonzero constant is decided by the direction field before the hypersurface is
chosen. All of these identities were verified by exact expansion.

\subsection{Prescribed direction fields and their tangency criterion}\label{subsec:forms}

In Lemma~\ref{lem:JL} the pair $(X,\Delta)$ is given together, with $\Delta$
tangent to $X$. In the constructions that follow we reverse the roles: we
\emph{prescribe} the direction field --- an explicit polynomial map
$\Delta\colon\C^{n-2}\to\C^{n-1}$, assigning to each parameter value $w$ the
direction that will be swept at the (yet unknown) point $X(w)$ --- and regard
the hypersurface $X$ as the unknown. For a fixed $\Delta$, the tangency
criterion \eqref{eq:tangcrit} then becomes a first-order partial differential
equation on the components of $X$, quasi-linear and multilinear in its first
derivatives; the normalization
adds the conditions on the coefficients $L_k$, in which $X$ appears with
progressively smaller weight as $k$ grows: $L_k$ is of degree $n-2-k$ in the
derivatives of $X$, and in particular the top coefficient $L_{n-2}$ involves
$\Delta$ alone. Which normalization branch is available is therefore decided by
the choice of $\Delta$ before any equation on $X$ is solved. This section works
with a general $n>2$ throughout; everything specific to $n=4$ and $n=5$ is
deferred to \S\ref{subsec:n4} and \S\ref{subsec:n5}.

Both the tangency criterion and Lemma~\ref{lem:JL} reduce to plain matrix
algebra, which we use to organize this analysis. Assume the column
$\Delta\in\C[w]^{\,n-1}$ is \emph{unimodular}, i.e.\ its entries generate the
unit ideal of $\C[w]$. Complete $\Delta$ to a matrix
$U\in\mathrm{SL}_{n-1}(\C[w])$ with first column $\Delta$, and let
\[
N\;\in\;\C[w]^{\,(n-2)\times(n-1)}
\]
be the matrix formed by the last $n-2$ rows of $U^{-1}$. Then $N\Delta=0$ (the
rows of $N$ are a free basis of the syzygies of $\Delta$), and for arbitrary
columns $V_1,\dots,V_{n-2}$,
\begin{equation}\label{eq:pairing}
\det\bigl(\Delta,V_1,\dots,V_{n-2}\bigr)
\;=\;\det\bigl(N\,(V_1,\dots,V_{n-2})\bigr),
\end{equation}
since $U^{-1}(\Delta,V_1,\dots,V_{n-2})$ has first column $e_1$ and
$\det U^{-1}=1$. Applying \eqref{eq:pairing} to the columns of
$J(X)+\gamma J(\Delta)$ and setting
\[
P\;:=\;N\,J(X),\qquad
\widetilde P\;:=\;N\,J(\Delta)
\qquad\bigl((n-2)\times(n-2)\ \text{matrices}\bigr),
\]
Lemma~\ref{lem:JL} becomes
\begin{equation}\label{eq:wedge}
\det J(F_0)\;=\;\gamma\,\det\bigl(P+\gamma\widetilde P\bigr),
\qquad\text{with}\qquad
\det P=0\ \ (\text{tangency}),
\end{equation}
and the branch coefficients $L_k$ of Lemma~\ref{lem:JL} are the coefficients of
$\det(P+\gamma\widetilde P)$ as a polynomial in $\gamma$: the tangency
$L_0=\det P\equiv0$, and, for $1\le k\le n-2$, $L_k$ is the sum of the
determinants obtained from $P$ by replacing exactly $k$ of its columns by the
corresponding columns of $\widetilde P$.

The normalization conditions now split into $n-2$ \emph{branches}, one for each
index $1\le k\le n-2$:
\[
\text{the \emph{$L_k$-branch}:}\qquad
L_j\equiv0\ (j\neq k),\qquad L_k\equiv\text{const}\neq0,
\qquad\text{giving}\qquad \det J(F_0)=L_k\,\gamma^{\,k+1} .
\]
(For $n=4$, where $L=L_1$ and $M=L_2$ as in \S\ref{subsec:sweephyp}, the two
branches are called the \emph{$L$-branch} and the \emph{$M$-branch}.)
Which branches can be realized is constrained by $\widetilde P=N\,J(\Delta)$,
which depends on $\Delta$ alone: since every term of $L_k$ contains $k$ columns
of $\widetilde P$, one has $L_j\equiv0$ automatically for all
$j>\operatorname{rank}\widetilde P$, so the rank of $\widetilde P$ bounds the
highest available branch. At the two extremes, a direction field depending on a
single parameter forces $\operatorname{rank}\widetilde P\le1$ (only the bottom
branch survives), while the tautological field with $\widetilde P$ the identity
opens the top branch $k=n-2$. Mixed direction fields, whose $\widetilde P$ has
intermediate rank, first occur for $n=5$ and are the subject of
\S\ref{subsec:n5}.

Two structural remarks apply for every $n$. First, on the locus where the first
row of $P$ is nonzero, tangency ($\det P\equiv0$, i.e.\ $P$ singular) expresses
the remaining rows through rational multipliers, and the equality of mixed
partial derivatives of the entries of $X$ converts the row relations defining
$P$ into \emph{scalar potentials}; the branch equations then become a system of
first-order partial differential equations on the potentials. Second, when
$\Delta$ has a constant entry, the potentials can be chosen globally. Write
$\Delta=(\delta_1,\dots,\delta_{n-1})^{\mathsf T}$ with $\delta_1=1$, take the
syzygy matrix $N$ with rows $-\,\delta_{i+1}e_1^{\mathsf T}+e_{i+1}^{\mathsf T}$
($1\le i\le n-2$), and define the \emph{potentials}
\[
G_i\;:=\;X_i-\delta_i\,X_1\qquad(i=2,\dots,n-1),
\qquad
\Gamma\;:=\;(G_2,\dots,G_{n-1})^{\mathsf T},
\qquad
\widehat\Delta\;:=\;(\delta_2,\dots,\delta_{n-1})^{\mathsf T}.
\]
Differentiating by the product rule,
$\partial_jG_i=\partial_jX_i-\delta_i\,\partial_jX_1-X_1\,\partial_j\delta_i$,
which in matrix form is
\[
P\;=\;N\,J(X)\;=\;J(\Gamma)+X_1\,J(\widehat\Delta),
\qquad\qquad
\widetilde P\;=\;N\,J(\Delta)\;=\;J(\widehat\Delta)
\]
(for the second identity, the row $-\delta_{i+1}e_1^{\mathsf T}+e_{i+1}^{\mathsf T}$
applied to $\partial_j\Delta$ gives
$\partial_j\delta_{i+1}-\delta_{i+1}\partial_j\delta_1=\partial_j\delta_{i+1}$,
since $\delta_1=1$ is constant). The whole branch analysis therefore takes
place in the $(n-2)\times(n-2)$ pencil
\[
P+\gamma\widetilde P\;=\;J(\Gamma)+\mu\,J(\widehat\Delta),
\qquad
\mu\;=\;X_1+\gamma ,
\]
in which the hypersurface enters only through the potentials $\Gamma$ and the
scalar $X_1$; this device is used systematically in \S\ref{subsec:n5}
and \S\ref{subsec:curvefam}.

\subsection{The twist in dimension $n$: the stage equations}\label{subsec:stage}

All the higher-dimensional maps below are produced by one and the same
procedure, which converts a normalized sweep into a Keller map. Since we use
it repeatedly, we record it once, together with the equations that have to be
solved. The input is a padded sweep $F_0=(\gamma x,\,S)^{\mathsf T}$,
$S=X+\gamma\Delta$, on a branch of \S\ref{subsec:forms}:
$\det J(F_0)=c\,\gamma^{\,k+1}$. The output has the shape
\[
F\;=\;\Bigl(C,\;\frac{S_1}{C^{\,e_1}},\;\dots,\;
\frac{S_{n-1}}{C^{\,e_{n-1}}}\Bigr)^{\mathsf T},
\qquad C=\gamma x ,
\]
after $\gamma$ and $w$ are expressed through the source variables by a
monomial map and a stage. Three layers of choices are involved.

\emph{(i) Discrete data.} Positive integers: weights $d_j$ for the parameters
($\operatorname{wt}w_j=d_j$), twist exponents $e_i$ for the target components,
and $x$-degrees $m_j$ for the source monomials. They must satisfy
\begin{equation}\label{eq:discrete}
\operatorname{wt}(X_i)\ge e_i\ \ (\text{every monomial}),\qquad
1+\operatorname{wt}(\Delta_i)\ge e_i,\qquad
\sum_j m_j=\sum_i e_i,\qquad
\sum_j d_j+(k+1)=\sum_i e_i .
\end{equation}
The first two conditions make
$E_i:=S_i\bigl(\gamma,\;\gamma^{d_1}u_1,\dots,\gamma^{d_{n-2}}u_{n-2}\bigr)
/\gamma^{\,e_i}$ a \emph{polynomial} in $(\gamma,u)$; the last two make the
$x$- and $\gamma$-powers cancel in the determinant chain below.

\emph{(ii) The stage.} On the source $\C^n\ni(x,y_1,\dots,y_{n-1})$ put
$v_j=x^{m_j}y_j$ and let the stage be
\[
(\gamma,u_1,\dots,u_{n-2})^{\mathsf T}
\;=\;\theta^0+L\,v+\textstyle\sum_\mu q_\mu\,\mu(v):
\]
a base point $\theta^0$, an invertible linear part $L$ with columns
$\mathrm{col}_1,\dots,\mathrm{col}_{n-1}$, and finitely many correction
vectors $q_\mu\in\C^{n-1}$ attached to monomials $\mu(v)$ of degree $\ge2$.
Let $J\subseteq\{1,\dots,n-1\}$ be the set of indices of the variables $v_j$
occurring in some correction monomial.

\begin{lemma}\label{lem:stage}
If every correction vector lies in
$V:=\mathrm{span}\{\mathrm{col}_j:j\notin J\}$, then the stage is a polynomial
automorphism of $\C^{n-1}$ with constant Jacobian determinant $\det L$.
\end{lemma}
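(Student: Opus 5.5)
The plan is to change coordinates on the target of the stage so that it becomes triangular. Write $\Theta(v)=\theta^0+Lv+\sum_\mu q_\mu\,\mu(v)$ and consider $\Psi:=L^{-1}(\Theta-\theta^0)$. This map satisfies
\[
\Psi(v)=v+\sum_\mu \bigl(L^{-1}q_\mu\bigr)\,\mu(v).
\]
Because $L^{-1}\mathrm{col}_j=e_j$, the hypothesis $q_\mu\in V=\mathrm{span}\{\mathrm{col}_j:j\notin J\}$ means exactly that each $L^{-1}q_\mu$ lies in $\mathrm{span}\{e_j:j\notin J\}$. Thus every correction term affects only the coordinates $\Psi_j$ with $j\notin J$. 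In contrast, every correction monomial $\mu(v)$ involves only the variables $v_j$ with $j\in J$, by the definition of $J$. Consequently:
\begin{itemize}
\item for $j\in J$ we have $\Psi_j(v)=v_j$;
\item for $j\notin J$ we have $\Psi_j(v)=v_j+h_j\bigl((v_i)_{i\in J}\bigr)$, where each $h_j$ is a polynomial in the $J$-variables only.
\end{itemize}

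This is a triangular (de Jonquières-type) map, and I would write down its inverse explicitly: $v_j=\Psi_j$ for $j\in J$, and $v_j=\Psi_j-h_j\bigl((\Psi_i)_{i\in J}\bigr)$ for $j\notin J$. The inverse is polynomial, so $\Psi$ is a polynomial automorphism of $\C^{n-1}$. After ordering the variables with $J$ first, its Jacobian matrix is block unipotent: the identity on the $J$-block, zero entries from the $(\notin J)$-variables into the $J$-rows, and the identity on the diagonal of the $(\notin J)$-block, because $h_j$ does not depend on any $v_i$ with $i\notin J$. Hence $\det J(\Psi)\equiv1$.

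Finally, $\Theta=\theta^0+L\circ\Psi$ is the composition of an automorphism with an invertible affine map, so it is itself a polynomial automorphism. By the chain rule its Jacobian determinant is $\det L\cdot\det J(\Psi)=\det L$, which is constant.

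The only delicate point is the bookkeeping in the middle step. One has to check that "correction vectors in $V$" really translates, after applying $L^{-1}$, into "corrections only in coordinates outside $J$", and that the monomials use only coordinates inside $J$. It is precisely the disjointness of these two index sets that kills any feedback and makes the map triangular rather than merely of unit Jacobian. No other obstacle is expected.
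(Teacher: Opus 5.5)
Your proof is correct and is essentially the paper's: the paper's inverse argument is exactly your observation that $L^{-1}q_\mu\in\mathrm{span}\{e_j:j\notin J\}$ makes the $J$-coordinates affine in the stage values, with the remaining coordinates recovered by substitution. The only difference is in the determinant. The paper obtains $\det L$ from a multilinear expansion of $\det\bigl(L+\sum_\mu q_\mu\nabla\mu(v)\bigr)$, where any replaced column puts more than $\dim V$ columns into $V$; you read it off the block-unipotent Jacobian of $\Psi$ and the chain rule, which is equivalent bookkeeping.
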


\begin{proof}
The Jacobian is $L+\sum_\mu q_\mu\,\nabla\mu(v)$; in the multilinear expansion
of its determinant, any term replaces a nonempty set of columns
$\mathrm{col}_j$ with $j\in J$ (since $\partial_j\mu\neq0$ forces $j\in J$) by
vectors in $V$; together with the untouched columns
$\{\mathrm{col}_j:j\notin J\}$ this places more than $\dim V$ columns inside
$V$, so every such term vanishes. For the inverse: $L^{-1}q_\mu\in
\mathrm{span}\{e_j:j\notin J\}$, so the $J$-coordinates of $v$ are affine in
the stage values; substituting them into the corrections (which involve only
those coordinates) recovers the remaining coordinates polynomially.
\end{proof}

\emph{(iii) The side conditions.} $F$ is a polynomial map iff
$x^{\,e_i}\mid E_i\circ\text{stage}$ for every $i$; since $v_j$ has
$x$-degree $m_j$, this is the finite system
\begin{equation}\label{eq:side}
\bigl[\mu\bigr]\bigl(E_i\circ\text{stage}\bigr)\;=\;0
\qquad\text{for every monomial $\mu(v)$ of $x$-degree}<e_i,\ i=1,\dots,n-1 .
\end{equation}
Ordered by the $x$-degree $r$ of $\mu$, the system is \emph{triangular}:
\begin{itemize}
\item $r=0$: $E_i(\theta^0)=0$ for all $i$ --- equivalently $S(\gamma_0,w^0)=0$
at the base parameters $w_j^{\,0}=\gamma_0^{\,d_j}u_j^0$: the swept
hypersurface passes through the origin. These $n-1$ equations are absorbed
into the free data of $X$.
\item $r=1$: $\nabla E_i(\theta^0)\cdot\mathrm{col}_j=0$ for each $v_j$ of
$x$-degree $1$ and each $i$ with $e_i\ge2$: the low columns lie in common
kernels of gradients.
\item $r\ge2$: for each column of $x$-degree $r$, kernel conditions
$\nabla E_i\cdot\mathrm{col}_j=0$ ($e_i>r$); and for each correction monomial
$\mu$ of $x$-degree $r$, the inhomogeneous linear equation
\[
\nabla E_i\cdot q_\mu\;=\;-\bigl[\mu\bigr]
\bigl(E_i\circ(\text{affine part of the stage})\bigr)
\qquad(e_i>r),
\]
whose right-hand side is a Taylor coefficient of $E_i$ along the columns
already chosen.
\end{itemize}
Every unknown --- the base point, a column, a correction vector --- enters
\eqref{eq:side} for the first time \emph{linearly}, so the hierarchy is solved
by finite-dimensional linear algebra, order by order; the only nondegeneracy
required is the linear independence of the gradients involved at each step,
together with the invertibility of the assembled $L$ and the span condition of
Lemma~\ref{lem:stage}. The Jacobian determinants then multiply to
\begin{equation}\label{eq:chain}
\det J(F)\;=\;
\underbrace{x^{\sum m_j}}_{\text{monomials}}\cdot
\underbrace{\det L}_{\text{stage}}\cdot
\underbrace{\gamma^{\sum d_j}}_{\text{scalings}}\cdot
\underbrace{c\,\gamma^{\,k+1}}_{\det J(F_0)}\cdot
\underbrace{(\gamma x)^{-\sum e_i}}_{\text{twist}}
\;=\;c\,\det L ,
\end{equation}
a nonzero constant. The discrete data of the six maps of this paper are:
\begin{center}
\begin{tabular}{lcccccc}
\hline
map & $n$ & $\det J(F_0)$ & $(d_j)$ & $(e_i)$ & $(m_j)$ & $\det J(F)=\pm c\det L$\\
\hline
$F$   & $3$ & $2\gamma^2$ & $(1)$     & $(1,2)$     & $(1,2)$     & $-2$\\
$G$   & $3$ & $2\gamma^2$ & $(1)$     & $(1,2)$     & $(1,2)$     & $2$\\
$F_4$ & $4$ & $2\gamma^2$ & $(1,1)$   & $(1,2,1)$   & $(1,1,2)$   & $-\tfrac{44}{9}$\\
$F_5$ & $4$ & $\gamma^3$  & $(1,2)$   & $(1,2,3)$   & $(1,2,3)$   & $\tfrac{160}{29}$\\
$F_6$ & $5$ & $\gamma^2$  & $(1,3,4)$ & $(1,2,3,4)$ & $(1,2,3,4)$ & $-290$\\
$F_7$ & $5$ & $\gamma^2$  & $(1,3,4)$ & $(1,2,3,4)$ & $(1,2,3,4)$ & $119377$\\
\hline
\end{tabular}
\end{center}
(the sign for $F$ comes from the traditional component reversal). All the
stage displays in the following subsections are solutions of
\eqref{eq:side} found exactly this way.

\subsection{The case $n=4$}\label{subsec:n4}

\subsubsection{Specialization I: $\Delta=(1,w_1,w_1^2)^{\mathsf T}$ and the map $F_4$}
\label{subsec:specI}

The column $\Delta=(1,w_1,w_1^2)^{\mathsf T}$ is unimodular, with the natural
syzygy matrix of \S\ref{subsec:forms} (rows
$-\delta_{i+1}e_1^{\mathsf T}+e_{i+1}^{\mathsf T}$):
\[
N=\begin{pmatrix}-w_1&1&0\\[1pt] -w_1^2&0&1\end{pmatrix},
\qquad N\Delta=0,\qquad
\det(\Delta,V,W)=\det\bigl(N\,(V,W)\bigr)\ \ \forall V,W .
\]
Since $\Delta$ depends only on $w_1$, its independent partial derivatives are
$\Delta'=\partial_1\Delta=(0,1,2w_1)^{\mathsf T}$ and
$\Delta''=\partial_1^2\Delta=(0,0,2)^{\mathsf T}$ (all higher ones vanish), and
$(\Delta,\Delta',\Delta'')$ is a frame of constant volume
$\det(\Delta,\Delta',\Delta'')=2$; against the syzygy matrix,
\[
N\Delta=0,\qquad
N\Delta'=\begin{pmatrix}1\\ 2w_1\end{pmatrix},\qquad
N\Delta''=\begin{pmatrix}0\\ 2\end{pmatrix}.
\]
For a ruled surface $X=A(w_1)+w_2B(w_1)$ --- so that
$\partial_1X=A'(w_1)+w_2B'(w_1)$ and $\partial_2X=B(w_1)$ --- expand the data
in the frame,
\[
B=b_1\Delta+b_2\Delta'+b_3\Delta'',\qquad
A'=a_1\Delta+a_2\Delta'+a_3\Delta''\qquad(a_i,b_i\in\C[w_1]),
\]
so that, using $\Delta'''=0$,
$B'=b_1'\Delta+(b_1+b_2')\Delta'+(b_2+b_3')\Delta''$. The product
$P=N\,J(X)=(N\,\partial_1X,\;N\,\partial_2X)$ is now explicit: abbreviating
$c_2=a_2+w_2(b_1+b_2')$ and $c_3=a_3+w_2(b_2+b_3')$,
\[
P=\begin{pmatrix}c_2 & b_2\\ 2(w_1c_2+c_3) & 2(w_1b_2+b_3)\end{pmatrix},
\qquad
\widetilde P=N\,J(\Delta)=\begin{pmatrix}1&0\\ 2w_1&0\end{pmatrix},
\]
and therefore
\[
\det\bigl(P+\gamma\widetilde P\bigr)\;=\;2\,(c_2b_3-c_3b_2)\;+\;2\,\gamma\,b_3 .
\]
The coefficient $M$ of $\gamma^2$ vanishes \emph{automatically} --- this
$\Delta$ lives on the $L$-branch --- the normalization $L\equiv2$ forces
$b_3\equiv1$, and the tangency, i.e.\ the identical vanishing in $w_2$ of the
$\gamma$-free term, reduces to
\[
b_3=1,\qquad b_1=b_2^2-b_2',\qquad a_2=a_3b_2 .
\]
Thus, on the ruled ansatz, the branch data are \emph{three free polynomials}
$b_2,a_3,a_1\in\C[w_1]$ (together with the constant of integration of $A$):
the relations displayed determine $b_3,b_1,a_2$, the curve $A$ is recovered
from $A'$ by integration --- which stays polynomial --- and \emph{every}
such choice satisfies $\det J(F_0)=2\gamma^2$ (verified by exact expansion on
randomized data). The finitely many side conditions of the twist then select
special members of this family. (In the language of \S\ref{subsec:curvefam},
this is the case $n=4$ of the curve-type family with $G_2$ linear in $w_2$:
the three free univariate polynomials correspond to the coefficients of
$G_2=\alpha(w_1)w_2+\beta(w_1)$ and to $\varphi(w_1)$ in
$H_3=cw_2+\varphi$.)
Crucially there are solutions with $b_2\not\equiv0$: the surface is then ruled but
\emph{not} a cylinder, and the second parameter $w_2$ enters every swept
coordinate. (Cylindrical solutions $b_1=b_2=0$ produce only shears of
three-dimensional examples.)

Take $b_2=w_1$, $a_3=1$ (so $a_2=w_1$, $b_1=w_1^2-1$) and
$a_1=\tfrac13-\tfrac83w_1$. Integrating, the curve and ruling are
\[
A(w_1)=\Bigl(\tfrac13w_1-\tfrac43w_1^2,\;\;
\tfrac23w_1^2-\tfrac89w_1^3,\;\;
2w_1+\tfrac79w_1^3-\tfrac23w_1^4\Bigr)^{\mathsf T},\qquad
B(w_1)=\bigl(w_1^2-1,\;w_1^3,\;w_1^4+w_1^2+2\bigr)^{\mathsf T},
\]
so the surface $X=A+w_2B=(p,q,r)^{\mathsf T}$ and the tangent field $\Delta$ are, explicitly,
\begin{align*}
p(w_1,w_2)&=\tfrac13w_1-\tfrac43w_1^2+w_2\,(w_1^2-1),\\
q(w_1,w_2)&=\tfrac23w_1^2-\tfrac89w_1^3+w_2\,w_1^3,\\
r(w_1,w_2)&=2w_1+\tfrac79w_1^3-\tfrac23w_1^4+w_2\,(w_1^4+w_1^2+2),\\
\Delta(w_1,w_2)&=(1,\;w_1,\;w_1^2)^{\mathsf T}
\qquad(\text{independent of }w_2).
\end{align*}
The tangency identity $\det(\Delta,J(X))\equiv0$ and the
normalization $L\equiv2$, $M\equiv0$ hold for this pair by construction (and were
verified by expansion).
Define on $\C^4\ni(x,y,z,t)$:
\[
\gamma=1+xy+x^2t,\quad u_1=1+xz,\quad
u_2=-\tfrac79+\tfrac{22}9xy+\tfrac83xz,\quad
w_1=\gamma u_1,\quad w_2=\gamma u_2,\quad C=\gamma x,
\]
\[
S_i=A_i(w_1)+w_2B_i(w_1)+\gamma\,\Delta_i(w_1)\quad(i=1,2,3),
\]
\[
\boxed{\;F_4\;=\;\Bigl(C,\;\;\frac{S_1}{C},\;\;\frac{S_2}{C^2},\;\;\frac{S_3}{C}\Bigr)^{\mathsf T}.\;}
\]
In the format of the composition diagram of \S\ref{subsec:twist}, $F_4$ is the
composite
\[
\begin{pmatrix}x\\ y\\ z\\ t\end{pmatrix}
\xrightarrow{\ \text{monomial}\ }
\begin{pmatrix}x\\ xy\\ xz\\ x^2t\end{pmatrix}
\xrightarrow{\ \text{affine}\ }
\begin{pmatrix}x\\ \gamma\\ u_1\\ u_2\end{pmatrix}
\xrightarrow{\ \substack{w_1=\gamma u_1\\[1pt] w_2=\gamma u_2}\ }
\begin{pmatrix}x\\ \gamma\\ w_1\\ w_2\end{pmatrix}
\xrightarrow{\ \text{padded sweep}\ }
\begin{pmatrix}\gamma x\\ S_1\\ S_2\\ S_3\end{pmatrix}
\xrightarrow{\ \text{twist}\ }
\begin{pmatrix}C\\ S_1/C\\ S_2/C^2\\ S_3/C\end{pmatrix},
\]
whose Jacobian determinants multiply to the constant computed after
Theorem~\ref{thm:F4} below.

\begin{theorem}\label{thm:F4}
$F_4$ is a polynomial map $\C^4\to\C^4$ with components of degrees $4,11,12,21$;
its Jacobian determinant is identically $-\tfrac{44}{9}$; and its generic fiber
consists of exactly $5$ points.
\end{theorem}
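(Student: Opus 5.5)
The plan is to check the three assertions separately, each through the composition diagram displayed before the theorem.

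\emph{Polynomiality and degrees.} First I check the discrete conditions \eqref{eq:discrete} for the data $(d_j)=(1,1)$, $(e_i)=(1,2,1)$, $(m_j)=(1,1,2)$. Every monomial of $p$ and $r$ has weight $\ge1$ (note $B(0)=(-1,0,2)^{\mathsf T}$, so $w_2B$ has weight $\ge1$). Every monomial of $q$ has weight $\ge2$, and $\operatorname{wt}\Delta_2+1=2$. Hence $E_i=S_i(\gamma,\gamma u)/\gamma^{e_i}$ is a polynomial in $(\gamma,u)$.

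Next I verify the side conditions \eqref{eq:side} in their triangular form. At $r=0$, with $\theta^0=(\gamma,u_1,u_2)=(1,1,-\tfrac79)$, I check $E_i(\theta^0)=0$ for $i=1,2,3$, i.e.\ $S(1,1,-\tfrac79)=0$. At $r=1$, only $E_2$ has $e_2=2$. Since $v_1=xy$ and $v_2=xz$ have $x$-degree $1$, I need $\nabla E_2(\theta^0)\cdot\mathrm{col}_1=\nabla E_2(\theta^0)\cdot\mathrm{col}_2=0$, where $\mathrm{col}_1=(1,0,\tfrac{22}9)^{\mathsf T}$ and $\mathrm{col}_2=(0,1,\tfrac83)^{\mathsf T}$. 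These are a handful of rational evaluations. The stage has no correction terms, so Lemma~\ref{lem:stage} is trivial here. Expanding the resulting quotients then gives the component degrees $4,11,12,21$.

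\emph{Jacobian.} I apply the chain \eqref{eq:chain}. By the branch computation of \S\ref{subsec:specI}, $\det J(F_0)=2\gamma^2$, so $c=2$ and $k=1$. The linear part of the stage, with rows $\gamma:(1,0,1)$, $u_1:(0,1,0)$, $u_2:(\tfrac{22}9,\tfrac83,0)$, has $\det L=-\tfrac{22}9$. The monomial factor $x^4$, the scaling factor $\gamma^2$ and the twist factor $(\gamma x)^{-4}$ cancel, leaving $2\cdot(-\tfrac{22}9)=-\tfrac{44}9$. The only real input is the identity $\det(P+\gamma\widetilde P)=2b_3=2$ with $b_3=1$, together with the tangency relations, which I would recheck for the displayed $A,B$.

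\emph{Generic fiber.} Every arrow of the diagram is bijective over the open set $\{x\neq0,\gamma\neq0\}$. The monomial map is invertible on $x\ne0$, the stage is an affine automorphism, $w=\gamma u$ is invertible on $\gamma\ne0$, and the twist is invertible on $C\ne0$. So for a generic target, which has $C\ne0$, the fiber of $F_4$ is in bijection with the solutions of $X(w)+\gamma\Delta(w)=Y$ having $\gamma\ne0$. The first coordinate gives $\gamma=Y_1-p$. Eliminating $\gamma$ from the other two coordinates gives
\[
q-w_1p=Y_2-w_1Y_1,\qquad r-w_1^2p=Y_3-w_1^2Y_1 .
\]
Since $X$ is linear in $w_2$ and $\Delta$ is independent of $w_2$, both equations are linear in $w_2$. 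The $w_2$-coefficients are $w_1^3-w_1(w_1^2-1)=w_1$ and $w_1^4+w_1^2+2-w_1^2(w_1^2-1)=2w_1^2+2$. Solving the first for $w_2$ (valid when $w_1\ne0$) and substituting into the second yields, after clearing the factor $w_1$, a univariate polynomial $R_Y(w_1)$. This is the analogue of the tangency polynomial. Conversely, each root with $w_1\neq0$ determines $w_2$ and $\gamma$ uniquely.

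\emph{Main obstacle.} The hard step is to show that $R_Y$ has degree exactly $5$ with a leading coefficient independent of $Y$, that it is squarefree for generic $Y$, and that generically none of its roots gives $w_1=0$ or $\gamma=0$. For the first point I would compute the top coefficients directly; I expect the degree-$6$ parts to cancel, as the $w_1^4$ terms did above. For squarefreeness and the exclusions, it suffices to exhibit one rational target whose $R_Y$ has five distinct roots, all with $w_1\ne0$ and $\gamma\ne0$, because these are open conditions in $Y$. If the degree count instead comes out as $6$ with a root forced at $w_1=0$ or $\gamma=0$, I would remove that spurious factor explicitly before counting.
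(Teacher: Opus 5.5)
Your proposal is correct and follows essentially the same route as the paper. You use the chain factorization $x^4\cdot(-\tfrac{22}{9})\cdot\gamma^2\cdot 2\gamma^2\cdot(\gamma x)^{-4}$ for the Jacobian and the triangular side conditions at the base point $(1,1,-\tfrac79)$ for polynomiality; for the fibers you eliminate $\gamma$ by the first sweep equation, take the resultant in $w_2$ of two linear equations with coefficients $w_1$ and $2w_1^2+2$, and certify squarefreeness and $\gamma\neq0$ at one rational target, then invoke openness. Two small points when you carry it out: the resultant is directly the quintic $\det(A-Y,B,\Delta)$ with constant leading coefficient $\tfrac29$, so no degree-$6$ cancellation occurs; and since $w_1$ and $2w_1^2+2$ have no common zero, every root, including $w_1=0$, carries a unique $(w_2,\gamma)$ without appealing to genericity.
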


Consequently $F_4$ is a non-injective Keller map
in dimension four which is not equivalent, under composition with polynomial
automorphisms on either side, to $\Phi\times\mathrm{id}$ for any
three-dimensional Keller map $\Phi$ of geometric degree $\neq5$: geometric degree
is invariant under such compositions, and $\deg(\Phi\times\mathrm{id})
=\deg\Phi$. In particular $F_4$ is not a trivial extension of Alp\"oge's example
or of the map $G$. Whether $F_4$ is equivalent to $\Phi\times\mathrm{id}$ for a
degree-\emph{five} member $\Phi$ of Gallagher's family \cite{Gallagher2026} is
left open; the fiber stratification of a product has product form, so the
stratification deferred to Appendix~\ref{app:F4} is expected to decide this.

\begin{proof}[Proof sketch]
The degrees and divisibilities are read off the construction, and
$\det J(F_4)\equiv-\tfrac{44}9$ is the chain factorization through the stage
(Lemma~\ref{lem:stage}); the constancy was also confirmed at random rational
points. For the fibers we use the elimination scheme that recurs for $F_5$,
$F_6$ and $F_7$: eliminate $\gamma$ through the first sweep equation, then
eliminate the remaining transverse parameter by a resultant, leaving a
univariate polynomial in $w_1$ certified by exact gcd computations. Here, let
$v_1\neq0$, set $(Y_1,Y_2,Y_3)=(v_1v_2,\,v_1^2v_3,\,v_1v_4)$, and substitute
$\gamma=Y_1-A_1-w_2B_1$ (the first sweep equation; $\Delta_1=1$) into the
other two. The resulting equations are \emph{linear} in $w_2$, with leading
coefficients $w_1$ and $2w_1^2+2$ having no common zero, and their resultant
with respect to $w_2$ is, up to sign, the compatibility determinant
\[
R(w_1;Y)\;=\;\det\bigl(A(w_1)-Y,\;B(w_1),\;\Delta(w_1)\bigr)
\;=\;\tfrac29w_1^5+\tfrac29w_1^4+\bigl(\tfrac89+Y_1\bigr)w_1^3
-\bigl(\tfrac43+2Y_2\bigr)w_1^2+(2Y_1+Y_3)\,w_1-2Y_2 ,
\]
a quintic with \emph{constant} leading coefficient $\tfrac29$ --- the tangency
quintic of the swept surface, playing the role of the tangency polynomial
\eqref{eq:tangency}. Each root carries a unique $(w_2,\gamma)$ (the two
leading coefficients are coprime), and each root with $\gamma\neq0$ lifts to
exactly one fiber point via $x=v_1/\gamma$ and the triangular stage, which
recovers $(y,z,t)$ rationally. At the witness target
$v=(\tfrac23,-\tfrac15,\tfrac12,3)$, exact univariate gcd certificates show
that $R$ is squarefree and coprime to the $w_2$-resultant of the second sweep
equation with $\gamma=0$, so that fiber has exactly $5$ points, and by
openness of these conditions the generic count is $5$.
\end{proof}

Written out in full, $F_4=(F_{4,1},F_{4,2},F_{4,3},F_{4,4})^{\mathsf T}$ with
{\small
\begin{align*}
F_{4,1} ={}& x^3t+x^2y+x\\[2pt]
F_{4,2} ={}&\tfrac{22}{9}x^6yz^2t^2+\tfrac{8}{3}x^6z^3t^2+\tfrac{44}{9}x^5y^2z^2t+\tfrac{16}{3}x^5yz^3t+\tfrac{44}{9}x^5yzt^2+\tfrac{41}{9}x^5z^2t^2+\tfrac{22}{9}x^4y^3z^2\\
&+\tfrac{8}{3}x^4y^2z^3+\tfrac{88}{9}x^4y^2zt+14x^4yz^2t+\tfrac{16}{3}x^4z^3t+\tfrac{22}{9}x^4yt^2+\tfrac{10}{9}x^4zt^2+\tfrac{44}{9}x^3y^3z+\tfrac{85}{9}x^3y^2z^2\\
&+\tfrac{16}{3}x^3yz^3+\tfrac{44}{9}x^3y^2t+12x^3yzt+\tfrac{70}{9}x^3z^2t-\tfrac{7}{9}x^3t^2+\tfrac{22}{9}x^2y^3+\tfrac{98}{9}x^2y^2z+\tfrac{92}{9}x^2yz^2+\tfrac{8}{3}x^2z^3\\
&+\tfrac{10}{3}x^2yt-\tfrac{4}{9}x^2zt+\tfrac{37}{9}xy^2+\tfrac{40}{9}xyz+\tfrac{29}{9}xz^2-\tfrac{26}{9}xt-\tfrac{26}{9}y-\tfrac{35}{9}z\\[2pt]
F_{4,3} ={}& \tfrac{22}{9}x^6yz^3t^2+\tfrac{8}{3}x^6z^4t^2+\tfrac{44}{9}x^5y^2z^3t+\tfrac{16}{3}x^5yz^4t+\tfrac{22}{3}x^5yz^2t^2+\tfrac{65}{9}x^5z^3t^2+\tfrac{22}{9}x^4y^3z^3\\
&+\tfrac{8}{3}x^4y^2z^4+\tfrac{44}{3}x^4y^2z^2t+\tfrac{58}{3}x^4yz^3t+\tfrac{16}{3}x^4z^4t+\tfrac{22}{3}x^4yzt^2+\tfrac{17}{3}x^4z^2t^2+\tfrac{22}{3}x^3y^3z^2\\
&+\tfrac{109}{9}x^3y^2z^3+\tfrac{16}{3}x^3yz^4+\tfrac{44}{3}x^3y^2zt+26x^3yz^2t+\tfrac{122}{9}x^3z^3t+\tfrac{22}{9}x^3yt^2+\tfrac{1}{3}x^3zt^2+\tfrac{22}{3}x^2y^3z\\
&+\tfrac{61}{3}x^2y^2z^2+16x^2yz^3+\tfrac{8}{3}x^2z^4+\tfrac{44}{9}x^2y^2t+\tfrac{46}{3}x^2yzt+\tfrac{26}{3}x^2z^2t-\tfrac{7}{9}x^2t^2+\tfrac{22}{9}xy^3+15xy^2z\\
&+16xyz^2+\tfrac{19}{3}xz^3+\tfrac{10}{3}xyt-2xzt+\tfrac{37}{9}y^2+\tfrac{16}{3}yz+\tfrac{11}{3}z^2-\tfrac{22}{9}t\\[2pt]
F_{4,4} ={}& \tfrac{22}{9}x^{12}yz^4t^4+\tfrac{8}{3}x^{12}z^5t^4+\tfrac{88}{9}x^{11}y^2z^4t^3+\tfrac{32}{3}x^{11}yz^5t^3+\tfrac{88}{9}x^{11}yz^3t^4+\tfrac{89}{9}x^{11}z^4t^4\\
&+\tfrac{44}{3}x^{10}y^3z^4t^2+16x^{10}y^2z^5t^2+\tfrac{352}{9}x^{10}y^2z^3t^3+\tfrac{148}{3}x^{10}yz^4t^3+\tfrac{32}{3}x^{10}z^5t^3+\tfrac{88}{9}x^9y^4z^4t\\
&+\tfrac{32}{3}x^9y^3z^5t+\tfrac{44}{3}x^{10}yz^2t^4+\tfrac{116}{9}x^{10}z^3t^4+\tfrac{176}{3}x^9y^3z^3t^2+\tfrac{266}{3}x^9y^2z^4t^2+32x^9yz^5t^2\\
&+\tfrac{22}{9}x^8y^5z^4+\tfrac{8}{3}x^8y^4z^5+\tfrac{176}{3}x^9y^2z^2t^3+\tfrac{272}{3}x^9yz^3t^3+\tfrac{350}{9}x^9z^4t^3+\tfrac{352}{9}x^8y^4z^3t+\tfrac{620}{9}x^8y^3z^4t\\
&+32x^8y^2z^5t+\tfrac{88}{9}x^9yzt^4+6x^9z^2t^4+88x^8y^3z^2t^2+\tfrac{584}{3}x^8y^2z^3t^2+\tfrac{394}{3}x^8yz^4t^2+16x^8z^5t^2\\
&+\tfrac{88}{9}x^7y^5z^3+\tfrac{59}{3}x^7y^4z^4+\tfrac{32}{3}x^7y^3z^5+\tfrac{352}{9}x^8y^2zt^3+\tfrac{248}{3}x^8yz^2t^3+\tfrac{440}{9}x^8z^3t^3+\tfrac{176}{3}x^7y^4z^2t\\
&+\tfrac{1520}{9}x^7y^3z^3t+146x^7y^2z^4t+32x^7yz^5t+\tfrac{22}{9}x^8yt^4-\tfrac{4}{9}x^8zt^4+\tfrac{176}{3}x^7y^3zt^2+212x^7y^2z^2t^2\\
&+\tfrac{616}{3}x^7yz^3t^2+\tfrac{172}{3}x^7z^4t^2+\tfrac{44}{3}x^6y^5z^2+52x^6y^4z^3+\tfrac{482}{9}x^6y^3z^4+16x^6y^2z^5+\tfrac{88}{9}x^7y^2t^3\\
&+\tfrac{112}{3}x^7yzt^3+20x^7z^2t^3+\tfrac{352}{9}x^6y^4zt+200x^6y^3z^2t+264x^6y^2z^3t+\tfrac{1120}{9}x^6yz^4t+\tfrac{32}{3}x^6z^5t\\
&-\tfrac{7}{9}x^7t^4+\tfrac{44}{3}x^6y^3t^2+\tfrac{344}{3}x^6y^2zt^2+\tfrac{1354}{9}x^6yz^2t^2+\tfrac{655}{9}x^6z^3t^2+\tfrac{88}{9}x^5y^5z+\tfrac{194}{3}x^5y^4z^2\\
&+\tfrac{968}{9}x^5y^3z^3+\tfrac{604}{9}x^5y^2z^4+\tfrac{32}{3}x^5yz^5+\tfrac{20}{3}x^6yt^3-\tfrac{40}{9}x^6zt^3+\tfrac{88}{9}x^5y^4t+\tfrac{1040}{9}x^5y^3zt\\
&+\tfrac{2168}{9}x^5y^2z^2t+\tfrac{554}{3}x^5yz^3t+\tfrac{338}{9}x^5z^4t+\tfrac{74}{3}x^5y^2t^2+\tfrac{452}{9}x^5yzt^2+\tfrac{287}{9}x^5z^2t^2+\tfrac{22}{9}x^4y^5\\
&+\tfrac{116}{3}x^4y^4z+\tfrac{994}{9}x^4y^3z^2+\tfrac{1007}{9}x^4y^2z^3+40x^4yz^4+\tfrac{8}{3}x^4z^5-\tfrac{34}{9}x^5t^3+\tfrac{236}{9}x^4y^3t+\tfrac{1024}{9}x^4y^2zt\\
&+\tfrac{382}{3}x^4yz^2t+\tfrac{454}{9}x^4z^3t+\tfrac{52}{9}x^4yt^2-\tfrac{47}{9}x^4zt^2+9x^3y^4+\tfrac{532}{9}x^3y^3z+\tfrac{859}{9}x^3y^2z^2+\tfrac{542}{9}x^3yz^3\\
&+\tfrac{83}{9}x^3z^4+\tfrac{206}{9}x^3y^2t+\tfrac{346}{9}x^3yzt+\tfrac{250}{9}x^3z^2t-\tfrac{17}{3}x^3t^2+\tfrac{40}{3}x^2y^3+\tfrac{131}{3}x^2y^2z+\tfrac{404}{9}x^2yz^2\\
&+\tfrac{41}{3}x^2z^3+\tfrac{10}{3}x^2yt+\tfrac{10}{9}x^2zt+9xy^2+\tfrac{142}{9}xyz+\tfrac{89}{9}xz^2-\tfrac{28}{9}xt+\tfrac{20}{3}y+\tfrac{29}{3}z
\end{align*}
}

The proof of the Jacobian statement is the verified chain factorization
$x^4\cdot(-\tfrac{22}9)\cdot\gamma^2\cdot2\gamma^2\cdot(\gamma x)^{-4}
=-\tfrac{44}9$, in which the factor $2\gamma^2$ is Lemma~\ref{lem:JL} for the
surface above ($L=2$, $M=0$; a polynomial identity), the factor $\gamma^2$ comes
from the scaling $(w_1,w_2)=(\gamma u_1,\gamma u_2)$, $x^4$ from the monomial
substitution $(v_1,v_2,v_3)=(xy,xz,x^2t)$ with $x$-degrees $(1,1,2)$, and
$-\tfrac{22}9$ is the determinant of the affine stage. The five side conditions
(divisibility of $S_1,S_3$ by $C$ and of $S_2$ by $C^2$) determine the constants
$(\gamma_0,u_1^0,u_2^0,\alpha_0,\alpha_1)=(1,1,-\tfrac79,\tfrac13,-\tfrac83)$;
the grading $(1,1,2)$ leaves the affine stage invertible. The fiber count was
established by exact
elimination: the tangency equation of the swept ruled family has degree $7$,
always divisible by the spurious factor $w_1^2-1$, leaving a squarefree quintic;
all five roots lift, as confirmed by explicit fibers with residuals below
$10^{-9}$, e.g. the target of $(0.7,-0.4,0.3,0.5)$ has five preimages, three real
and one conjugate pair.

\subsubsection{Specialization II: $\Delta=(1,w_1,w_2)^{\mathsf T}$ and a map of geometric degree ten}
\label{subsec:specII}

The column $\Delta=(1,w_1,w_2)^{\mathsf T}$ is unimodular with the natural
syzygy matrix
\[
N=\begin{pmatrix}-w_1&1&0\\ -w_2&0&1\end{pmatrix},
\qquad N\Delta=0 ,
\]
and now the partial derivatives of $\Delta$ are the \emph{constant} columns
$\partial_1\Delta=e_2$ and $\partial_2\Delta=e_3$: the triple
$(\Delta,\partial_1\Delta,\partial_2\Delta)$ is a frame of constant volume
$\det(\Delta,e_2,e_3)=1$, with $N\Delta=0$, $Ne_2=(1,0)^{\mathsf T}$,
$Ne_3=(0,1)^{\mathsf T}$, so that $\widetilde P=N\,J(\Delta)=I_2$, the identity
matrix. Exactly as in \S\ref{subsec:specI}, expand the columns of $J(X)$ in the
frame,
\[
\partial_1X=a_1\Delta+a_2e_2+a_3e_3,\qquad
\partial_2X=b_1\Delta+b_2e_2+b_3e_3
\qquad(a_i,b_i\in\C[w_1,w_2]);
\]
comparing first entries forces $a_1=\partial_1X_1$ and $b_1=\partial_2X_1$, so
the first coefficients are automatically integrable, with $X_1$ itself as their
potential. The products with $N$ are again explicit:
\[
P=N\,J(X)=\begin{pmatrix}a_2 & b_2\\ a_3 & b_3\end{pmatrix},
\qquad
\det\bigl(P+\gamma\widetilde P\bigr)
\;=\;(a_2b_3-a_3b_2)\;+\;\gamma\,(a_2+b_3)\;+\;\gamma^2 .
\]
Thus $M\equiv1$ identically and $L=a_2+b_3$, the trace of $P$: this $\Delta$
lives on the $M$-branch, with normalization $L\equiv0$ and
$\det J(F_0)=\gamma^3$.

The branch is solved completely by combining the two algebraic conditions with
the integrability of the expansion. The tangency and the normalization
$L\equiv0$ give two equations,
\[
a_2b_3-a_3b_2\;=\;0
\qquad\text{and}\qquad
a_2+b_3\;=\;0 ,
\]
which say that $P$ is trace-free of rank $\le1$; this is parametrized by two
scalars $\rho,\lambda$:
\[
(a_2,\;b_2,\;a_3,\;b_3)\;=\;(-\rho\lambda,\;\lambda,\;-\rho^2\lambda,\;\rho\lambda).
\]
The equality of mixed partials $\partial_2(\partial_1X)=\partial_1(\partial_2X)$,
expanded in the frame (using $\partial_2\Delta=e_3$ and $\partial_1\Delta=e_2$,
the $e_i$ being constant), gives three scalar equations:
\[
\partial_2a_1=\partial_1b_1\ \ (\text{automatic}),\qquad
\partial_2a_2=b_1+\partial_1b_2,\qquad
a_1+\partial_2a_3=\partial_1b_3 .
\]
The last two express the gradient of $X_1$ through $(\rho,\lambda)$:
\[
\partial_2X_1=\partial_2a_2-\partial_1b_2=-\,\partial_2(\rho\lambda)-\partial_1\lambda,
\qquad
\partial_1X_1=\partial_1b_3-\partial_2a_3=\partial_1(\rho\lambda)+\partial_2(\rho^2\lambda),
\]
and the compatibility $\partial_1\partial_2X_1=\partial_2\partial_1X_1$ of this
gradient is the single \emph{linear} wave-type equation
\begin{equation}\label{eq:wave}
\lambda_{11}+2(\rho\lambda)_{12}+(\rho^2\lambda)_{22}\;=\;0 ,
\end{equation}
after which $X_1$ is recovered by integration,
and then the potentials
$G_2=X_2-w_1X_1$ and $G_3=X_3-w_2X_1$ --- whose gradients are, by
\S\ref{subsec:forms}, the rows of $P-X_1\widetilde P$ --- by two further
integrations.

Solutions of \eqref{eq:wave} are produced in two steps. First fix the
multiplier $\rho\in\C[w_1,w_2]$; since the rank-one parametrization above is
linear in $\lambda$ for fixed $\rho$, equation \eqref{eq:wave} is then
\emph{linear} in $\lambda$, with polynomial coefficients (a degenerate
second-order equation in the plane, cf.\ \cite{CourantHilbert1962}), and its
polynomial solutions of any bounded degree are found by comparing
coefficients: they form the kernel of an explicit linear map between
finite-dimensional spaces of polynomials. Each solution $(\rho,\lambda)$ then
determines a surface: $X_1$ is recovered from the displayed gradient ---
whose compatibility is exactly \eqref{eq:wave}, which is \emph{why} the
equation governs the branch --- and $X_2,X_3$ follow by the further
integrations. For the choice $\rho=w_1$ used below no ad hoc computation is
needed: \eqref{eq:wave} reads
$\lambda_{11}+2w_1\lambda_{12}+w_1^2\lambda_{22}+2\lambda_2=0$, and the
substitution $\eta=w_2-\tfrac12w_1^2$ transforms it into the \emph{heat
equation},
\[
\lambda(w_1,w_2)\;=\;\mu\bigl(w_1,\;w_2-\tfrac12w_1^2\bigr),
\qquad
\partial_{w_1}^{2}\mu+\partial_\eta\mu\;=\;0
\]
(verified by exact expansion). Its polynomial solutions are completely
described in the literature: by Rosenbloom and Widder
\cite{RosenbloomWidder1959}, they are spanned by the \emph{heat polynomials}
\[
v_k(x,t)\;=\;\sum_{j\le k/2}\frac{k!}{j!\,(k-2j)!}\,x^{\,k-2j}\,t^{\,j},
\]
one in each degree $k$. Hence \eqref{eq:wave} with $\rho=w_1$ has exactly one
new polynomial solution in each degree, namely
$v_k\bigl(w_1,\tfrac12w_1^2-w_2\bigr)$; normalized to be monic in $w_1$, the
first six are
\[
1,\quad w_1,\quad w_1^2-w_2,\quad w_1^3-\tfrac32w_1w_2,\quad
w_1^4-\tfrac{12}5w_1^2w_2+\tfrac65w_2^2,\quad
w_1^5-\tfrac{40}{13}w_1^3w_2+\tfrac{30}{13}w_1w_2^2 .
\]
The affine $\lambda$ chosen below is a combination of the first three; every
further heat polynomial is a candidate surface for the construction, whose
fiber geometry is unexplored.

(Under the normalization $X_1=w_1$
the branch is rigid --- only a two-parameter family survives, and its twist is
obstructed; the freedom of a general $X_1$ is essential.) Taking $\rho=w_1$ and
$\lambda=\nu_1+\nu_2w_1+\nu_3(w_2-w_1^2)$, which solves \eqref{eq:wave}, and
integrating, one obtains a three-parameter family of quintic surfaces, linear
in $(\nu_1,\nu_2,\nu_3)$.
The five divisibility constants of the twist (with scalings
$(w_1,w_2)=(\gamma u_1,\gamma^2u_2)$, monomials $(v_1,v_2,v_3)=(xy,x^2z,x^3t)$
of $x$-degrees $(1,2,3)$, and twist exponents $(1,2,3)$, so that
$\det=\;C^{-6}\cdot\gamma^3\cdot\gamma^3\cdot x^6\cdot\det(\text{stage})$)
now form a \emph{solvable linear system}: at the base point
$(\gamma_0,u_1^0,u_2^0)=(1,1,1)$ one finds
$(\nu_1,\nu_2,\nu_3)=\bigl(\tfrac{51}{29},-\tfrac{60}{29},-\tfrac{240}{29}\bigr)$,
with surface
\begin{align*}
X_1&=\tfrac{160}{29}w_1^3-\tfrac{60}{29}w_1^2-\tfrac{240}{29}w_1w_2+\tfrac{51}{29}w_1+\tfrac{60}{29}w_2,\\
X_2&=\tfrac{60}{29}w_1^4-\tfrac{20}{29}w_1^3-\tfrac{120}{29}w_2^2+\tfrac{51}{29}w_2,\\
X_3&=-\tfrac{48}{29}w_1^5+\tfrac{15}{29}w_1^4+\tfrac{240}{29}w_1^3w_2-\tfrac{17}{29}w_1^3-\tfrac{60}{29}w_1^2w_2-\tfrac{240}{29}w_1w_2^2+\tfrac{51}{29}w_1w_2+\tfrac{30}{29}w_2^2 .
\end{align*}
The residual quadratic side condition is absorbed by a \emph{unipotent nonlinear
stage}: with $v=(v_1,v_2,v_3)=(xy,x^2z,x^3t)$, set
\[
\gamma=1+\tfrac{20}{29}v_1,\qquad
u_1=1-\tfrac{49}{29}v_1+8v_2,\qquad
u_2=1-\tfrac{69}{29}v_1+9v_2+v_3+\tfrac{4197}{1682}\,v_1^2 ,
\]
where the $v_1$-column lies in $\ker(\nabla E_2)\cap\ker(\nabla E_3)$, the
$v_2$-column in $\ker(\nabla E_3)$, and the coefficient $\tau=\tfrac{4197}{1682}$
kills the $v_1^2$-obstruction while leaving the stage Jacobian constant
($=\tfrac{160}{29}$), because the corresponding cofactor vanishes by design.
With $w_1=\gamma u_1$, $w_2=\gamma^2u_2$, $C=\gamma x$ and
$S_i=X_i(w_1,w_2)+\gamma\Delta_i(w_1,w_2)$, define
\[
\boxed{\;F_5\;=\;\Bigl(C,\;\;\frac{S_1}{C},\;\;\frac{S_2}{C^2},\;\;\frac{S_3}{C^3}\Bigr)^{\mathsf T}.\;}
\]
As a composition diagram,
\[
\begin{pmatrix}x\\ y\\ z\\ t\end{pmatrix}
\xrightarrow{\ \text{monomial}\ }
\begin{pmatrix}x\\ xy\\ x^2z\\ x^3t\end{pmatrix}
\xrightarrow{\ \text{stage}\ }
\begin{pmatrix}x\\ \gamma\\ u_1\\ u_2\end{pmatrix}
\xrightarrow{\ \substack{w_1=\gamma u_1\\[1pt] w_2=\gamma^2u_2}\ }
\begin{pmatrix}x\\ \gamma\\ w_1\\ w_2\end{pmatrix}
\xrightarrow{\ \text{padded sweep}\ }
\begin{pmatrix}\gamma x\\ S_1\\ S_2\\ S_3\end{pmatrix}
\xrightarrow{\ \text{twist}\ }
\begin{pmatrix}C\\ S_1/C\\ S_2/C^2\\ S_3/C^3\end{pmatrix},
\]
in which the second arrow is now the unipotent nonlinear stage displayed above
(a polynomial automorphism of the coordinates $(\gamma,u_1,u_2)$ over $x$)
rather than a plain affine map.

\begin{theorem}\label{thm:F5}
$F_5$ is a polynomial map $\C^4\to\C^4$ with components of degrees $3,12,14,16$;
its Jacobian determinant is identically $\tfrac{160}{29}$; and its generic fiber
consists of exactly $10$ points.
\end{theorem}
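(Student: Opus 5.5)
The proof follows the pattern of Theorem~\ref{thm:F4}, in three parts: polynomiality and degrees, the Jacobian via the chain factorization \eqref{eq:chain}, and the generic fiber count via univariate elimination.

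\emph{Polynomiality and degrees.} First I check that the surface $X$ above, with $\Delta=(1,w_1,w_2)^{\mathsf T}$, is on the $M$-branch. This means $\det(\Delta,J(X))\equiv0$ and $L=a_2+b_3\equiv0$, so that $\det J(F_0)=\gamma^3$ by Lemma~\ref{lem:JL}. It should hold by construction from $\rho=w_1$ and the affine $\lambda$ solving \eqref{eq:wave}, and is confirmed by expanding the $2\times2$ pencil $P+\gamma I_2$.

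Next I verify the side conditions \eqref{eq:side}. Set $E_i=S_i(\gamma,\gamma u_1,\gamma^2u_2)/\gamma^{e_i}$ with $(e_i)=(1,2,3)$. The discrete conditions \eqref{eq:discrete} hold: $\Delta=(1,w_1,w_2)$ has weights $(0,1,2)$, and every monomial of $X_i$ has weight $\ge e_i$ under $\operatorname{wt}(w_1,w_2)=(1,2)$, which I check termwise on the displayed quintic. Then $x^{e_i}\mid E_i\circ\text{stage}$ reduces to the following finite list of Taylor coefficients:
\begin{itemize}
\item $E_i(1,1,1)=0$ for all $i$;
\item $\nabla E_2\cdot\mathrm{col}_1=\nabla E_3\cdot\mathrm{col}_1=0$;
\item $\nabla E_3\cdot\mathrm{col}_2=0$;
\item the $v_1^2$ coefficient of $E_3$ vanishes, which is where $\tau=\tfrac{4197}{1682}$ enters.
\end{itemize}
These are exact rational identities. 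The degrees $3,12,14,16$ are then read off after substitution.

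\emph{Jacobian.} Lemma~\ref{lem:stage} applies because the only correction monomial is $v_1^2$, so $J=\{1\}$, and its vector $(0,0,\tau)^{\mathsf T}$ lies in the span of $\mathrm{col}_2=(0,8,9)^{\mathsf T}$ and $\mathrm{col}_3=(0,0,1)^{\mathsf T}$. So the stage has constant determinant $\det L=\tfrac{20}{29}\cdot8\cdot1=\tfrac{160}{29}$, the matrix being triangular. The chain \eqref{eq:chain} then gives
\[
x^6\cdot\tfrac{160}{29}\cdot\gamma^{3}\cdot\gamma^3\cdot(\gamma x)^{-6}=\tfrac{160}{29},
\]
with no sign change since the components are in sweep order.

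\emph{Generic fiber.} Fix $v$ with $v_1\neq0$ and set $Y=(v_1v_2,v_1^2v_3,v_1^3v_4)$. Solving $F_5=v$ amounts to solving $S(\gamma,w)=Y$ with $\gamma\ne0$ and then lifting. The lift is unique: $x=v_1/\gamma$, then $(u_1,u_2)=(w_1/\gamma,w_2/\gamma^2)$, then $(v_1,v_2,v_3)$ by inverting the triangular stage, then $(y,z,t)$.

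Since $\Delta_1=1$, I eliminate $\gamma=Y_1-X_1(w)$. The remaining two equations are
\[
X_2+w_1(Y_1-X_1)=Y_2,\qquad X_3+w_2(Y_1-X_1)=Y_3.
\]
I eliminate $w_2$ between them by a resultant, obtaining a univariate polynomial $R(w_1;Y)$, the analogue of the tangency quintic. The claim is that after removing any $Y$-independent spurious factor (as with $w_1^2-1$ for $F_4$), $R$ has degree $10$ with constant leading coefficient.

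The main obstacle is this degree count and the lifting. One must show that the resultant's extraneous factors are independent of $Y$, and that each root of the reduced factor carries a unique $w_2$; this needs the leading coefficients in $w_2$ to be coprime, or else a subresultant argument. I would then finish as in Theorem~\ref{thm:F4}: exhibit a rational witness target at which exact gcd computations show the degree-$10$ factor is squarefree, has a unique $w_2$ per root, and is coprime to the locus $\gamma=0$, i.e.\ to the resultant with $Y_1=X_1$. Openness of these conditions then gives $10$ points generically. If the direct resultant degree is too large, I would first use the rank-one structure $P=\lambda(-\rho,1)^{\mathsf T}(1,\rho)$-type factorization to rewrite the system as a line-through-point condition, $\det(X(w)-Y,\partial\cdot,\Delta)$-style, before resultanting.
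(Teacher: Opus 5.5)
Your plan is essentially the paper's proof: the same chain factorization through the triangular stage ($\det L=\tfrac{160}{29}$, no sign change), then the same elimination $\gamma=Y_1-X_1$ and $w_2$-resultant, giving a degree-ten polynomial with constant leading coefficient. The paper certifies it at a rational witness target exactly as you propose: squarefreeness, coprimality with the $\gamma=0$ resultant, and coprimality with the first subresultant, followed by openness. One simplification: no spurious factor has to be removed here, since both remaining equations are quadratic in $w_2$ with constant leading coefficients $-\tfrac{120}{29}$ and $-\tfrac{30}{29}$, so your fallback via the rank-one factorization of $P$ is not needed.
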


Consequently $F_5$ is a four-dimensional
non-injective Keller map arising from the $M$-branch, of geometric degree ten,
not equivalent to $\Phi\times\mathrm{id}$ for any three-dimensional Keller map
$\Phi$ of geometric degree $\neq10$.

\begin{proof}[Proof sketch]
The divisibilities and the constancy of the determinant are the chain
factorization above, confirmed by exact evaluation at random rational points.
For the fibers we run the same elimination scheme as for $F_4$: let
$v_1\neq0$ and set $(Y_1,Y_2,Y_3)=(v_1v_2,\,v_1^2v_3,\,v_1^3v_4)$. Eliminating
$\gamma=Y_1-X_1(w_1,w_2)$ leaves the two equations
\[
X_2+(Y_1-X_1)\,w_1\;=\;Y_2,\qquad X_3+(Y_1-X_1)\,w_2\;=\;Y_3
\]
in $(w_1,w_2)$, whose resultant with respect to $w_2$ is, after the harmless
rescaling by $-29^4/30$, the degree-ten polynomial
{\small
\begin{align*}
R(w_1;Y)={}&20480w_1^{10}-51200w_1^{9}+57600w_1^{8}+\bigl(222720Y_1-126720\bigr)w_1^{7}\\
&+\bigl(-204160Y_1-742400Y_2+234480\bigr)w_1^{6}
+\bigl(-119712Y_1+890880Y_2+890880Y_3-155448\bigr)w_1^{5}\\
&+\bigl(201840Y_1^2+59160Y_1-417600Y_2-1113600Y_3+65025\bigr)w_1^{4}\\
&+\bigl(-67280Y_1^2-538240Y_1Y_2+41412Y_1+306240Y_2+556800Y_3-44217\bigr)w_1^{3}\\
&+\bigl(-146334Y_1^2+201840Y_1Y_2+807360Y_1Y_3+75429Y_1-88740Y_2-459360Y_3\bigr)w_1^{2}\\
&+\bigl(97556Y_1^3-42891Y_1^2-403680Y_1Y_3+177480Y_3\bigr)w_1\\
&-97556Y_1^2Y_2+42891Y_1Y_2+171564Y_1Y_3-25230Y_2^2+201840Y_2Y_3-403680Y_3^2-75429Y_3 ,
\end{align*}
}
again with \emph{constant} leading coefficient, so the number of roots never
drops and, as for $F_4$, all fiber degenerations pass through $\gamma=0$. At
the witness target
$v=(\tfrac12,\tfrac23,-\tfrac13,1)$, exact univariate gcd certificates show
that $R$ is squarefree; that $R$ is coprime to the $w_2$-resultant of the
first equation with $\gamma=0$, so every root has $\gamma\neq0$; and that $R$
is coprime to the first subresultant of the pair, so each root $w_1$ carries a
unique $w_2$. Since $x=v_1/\gamma$ and the triangular stage recovers $(y,z,t)$
rationally wherever $\gamma\neq0$, that fiber has exactly $10$ points, and by
openness of these conditions the generic count is $10$. (The counts were also
confirmed by Newton iteration at multiple targets.)
\end{proof}

Written out in full,
$F_5=(F_{5,1},F_{5,2},F_{5,3},F_{5,4})^{\mathsf T}$ with
{\small
\begin{align*}
F_{5,1} ={}& \tfrac{20}{29}x^2y+x\\[2pt]
F_{5,2} ={}& \tfrac{32768000}{24389}x^7y^2z^3-\tfrac{602112000}{707281}x^6y^3z^2-\tfrac{768000}{24389}x^6y^2zt+\tfrac{3276800}{841}x^6yz^3+\tfrac{2076288000}{20511149}x^5y^4z\\
&+\tfrac{4704000}{707281}x^5y^3t-\tfrac{54835200}{24389}x^5y^2z^2+\tfrac{80752000}{20511149}x^4y^5-\tfrac{76800}{841}x^5yzt+\tfrac{81920}{29}x^5z^3+\tfrac{152428800}{707281}x^4y^3z\\
&+\tfrac{374400}{24389}x^4y^2t-\tfrac{1044480}{841}x^4yz^2+\tfrac{169140800}{20511149}x^3y^4-\tfrac{1920}{29}x^4zt+\tfrac{515520}{24389}x^3y^2z+\tfrac{3360}{841}x^3yt+\tfrac{9600}{29}x^3z^2\\
&+\tfrac{907520}{707281}x^2y^3-\tfrac{108960}{841}x^2yz-\tfrac{180}{29}x^2t+\tfrac{14050}{24389}xy^2-\tfrac{252}{29}xz+y\\[2pt]
F_{5,3} ={}& \tfrac{98304000}{24389}x^8y^2z^4-\tfrac{2408448000}{707281}x^7y^3z^3+\tfrac{9830400}{841}x^7yz^4+\tfrac{22127616000}{20511149}x^6y^4z^2-\tfrac{191692800}{24389}x^6y^2z^3\\
&-\tfrac{90354432000}{594823321}x^5y^5z-\tfrac{48000}{24389}x^6y^2t^2+\tfrac{245760}{29}x^6z^4-\tfrac{201456000}{20511149}x^5y^4t+\tfrac{45158400}{24389}x^5y^3z^2\\
&-\tfrac{73022484000}{17249876309}x^4y^6-\tfrac{864000}{24389}x^5y^2zt-\tfrac{1310720}{841}x^5yz^3-\tfrac{5316643200}{20511149}x^4y^4z-\tfrac{4800}{841}x^5yt^2-\tfrac{13521600}{707281}x^4y^3t\\
&-\tfrac{25906560}{24389}x^4y^2z^2-\tfrac{4696088400}{594823321}x^3y^5-\tfrac{86400}{841}x^4yzt+\tfrac{112640}{29}x^4z^3+\tfrac{69664320}{707281}x^3y^3z-\tfrac{120}{29}x^4t^2\\
&+\tfrac{62760}{24389}x^3y^2t-\tfrac{1613760}{841}x^3yz^2+\tfrac{40879390}{20511149}x^2y^4-\tfrac{2160}{29}x^3zt+\tfrac{3661320}{24389}x^2y^2z+\tfrac{240}{29}x^2yt+\tfrac{9480}{29}x^2z^2\\
&-\tfrac{516820}{707281}xy^3-\tfrac{105360}{841}xyz-\tfrac{189}{29}xt-\tfrac{1437}{1682}y^2-z\\[2pt]
F_{5,4} ={}& -\tfrac{629145600}{24389}x^9y^2z^5+\tfrac{19267584000}{707281}x^8y^3z^4+\tfrac{49152000}{24389}x^8y^2z^3t-\tfrac{62914560}{841}x^8yz^5-\tfrac{132882432000}{20511149}x^7y^4z^3\\
&-\tfrac{903168000}{707281}x^7y^3z^2t+\tfrac{1975910400}{24389}x^7y^2z^4-\tfrac{15504384000}{20511149}x^6y^5z^2-\tfrac{768000}{24389}x^7y^2zt^2+\tfrac{4915200}{841}x^7yz^3t\\
&-\tfrac{1572864}{29}x^7z^5+\tfrac{2308608000}{20511149}x^6y^4zt-\tfrac{15174451200}{707281}x^6y^3z^3+\tfrac{3799290048000}{17249876309}x^5y^6z+\tfrac{4704000}{707281}x^6y^3t^2\\
&-\tfrac{85708800}{24389}x^6y^2z^2t+\tfrac{54312960}{841}x^6yz^4+\tfrac{8448384000}{594823321}x^5y^5t-\tfrac{11697561600}{20511149}x^5y^4z^2+\tfrac{2437443220800}{500246412961}x^4y^7\\
&-\tfrac{76800}{841}x^6yzt^2+\tfrac{122880}{29}x^6z^3t+\tfrac{195724800}{707281}x^5y^3zt-\tfrac{496404480}{24389}x^5y^2z^3+\tfrac{184263724800}{594823321}x^4y^5z\\
&+\tfrac{374400}{24389}x^5y^2t^2-\tfrac{1873920}{841}x^5yz^2t+\tfrac{184320}{29}x^5z^4+\tfrac{484262400}{20511149}x^4y^4t+\tfrac{1251989760}{707281}x^4y^3z^2\\
&+\tfrac{135717082080}{17249876309}x^3y^6-\tfrac{1920}{29}x^5zt^2+\tfrac{2238720}{24389}x^4y^2zt-\tfrac{844800}{841}x^4yz^3-\tfrac{4465704480}{20511149}x^3y^4z+\tfrac{2760}{841}x^4yt^2\\
&+\tfrac{7680}{29}x^4z^2t-\tfrac{6755640}{707281}x^3y^3t-\tfrac{9897600}{24389}x^3y^2z^2-\tfrac{2476692498}{594823321}x^2y^5-\tfrac{145200}{841}x^3yzt+\tfrac{123776}{29}x^3z^3\\
&-\tfrac{61829880}{707281}x^2y^3z-\tfrac{210}{29}x^3t^2-\tfrac{126390}{24389}x^2y^2t-\tfrac{1938744}{841}x^2yz^2+\tfrac{128225745}{41022298}xy^4-\tfrac{2412}{29}x^2zt\\
&+\tfrac{5740350}{24389}xy^2z+\tfrac{11001}{841}xyt+\tfrac{9318}{29}xz^2+\tfrac{1629343}{1414562}y^3-\tfrac{99879}{841}yz-\tfrac{160}{29}t
\end{align*}
}

\subsection{The case $n=5$: mixed direction fields}\label{subsec:n5}

In dimension five the parameters are $w=(w_1,w_2,w_3)$, the hypersurface
$X\in\C[w]^4$ is a parametrized threefold in $\C^4$, the matrices
$P,\widetilde P$ are $3\times3$, and there are three potential branches
$L_1,L_2,L_3$. Three natural unimodular direction fields with first entry $1$
present themselves, and the general principle of \S\ref{subsec:forms} --- the
rank of $\widetilde P$ bounds the highest branch --- sorts them immediately.
Throughout we use the syzygy matrix $N$ with rows
$-\delta_{i+1}e_1^{\mathsf T}+e_{i+1}^{\mathsf T}$ and the potentials
$\Gamma=(G_2,G_3,G_4)$ of \S\ref{subsec:forms}, so that
$\widetilde P=J(\widehat\Delta)$ and the branch analysis lives in the pencil
$J(\Gamma)+\mu\widetilde P$, $\mu=X_1+\gamma$.

\begin{itemize}
\item $\Delta=(1,w_1,w_1^2,w_1^3)^{\mathsf T}$: the field depends on $w_1$ alone, so
$J(\widehat\Delta)$ has only its first column nonzero and
$\operatorname{rank}\widetilde P=1$: $L_2\equiv L_3\equiv0$ automatically, only
the bottom branch is available, and the analysis is parallel to
Specialization~I --- one sweeps a one-parameter direction field along a
threefold, using the constant-volume frame $(\Delta,\Delta',\Delta'',\Delta''')$
of its partial derivatives ($\det=12$). In \S\ref{subsec:curvefam} we solve this case
completely, in every dimension at once, and extract a second five-dimensional
counterexample from it.
\item $\Delta=(1,w_1,w_2,w_3)^{\mathsf T}$: the tautological field, with
$\widetilde P=J(w_1,w_2,w_3)^{\mathsf T}=I_3$, so $L_3\equiv1$: the top branch
is open, $\det J(F_0)=\gamma^4$, and the normalization $L_1\equiv L_2\equiv0$
generalizes the linear wave-type equation \eqref{eq:wave} of
Specialization~II; the reduction through $\rho$-multipliers goes through
verbatim, one dimension up.
\item $\Delta=(1,w_1,w_1^2,w_2)^{\mathsf T}$: the first genuinely \emph{mixed} field ---
neither curve-like nor tautological. (Any field
$(1,w_1,w_1^2,\ell(w_2,w_3))^{\mathsf T}$ with $\ell$ a nonconstant linear
form, e.g.\ $w_2+w_3$, reduces to this one by an affine change of the
parameters $(w_2,w_3)$, which only rescales $\det J(F_0)$.) This is the case
we now work out; it looks very different, and indeed behaves differently: its
middle branch turns out to be rigid on a natural locus.
\end{itemize}

\medskip\noindent\textbf{The pencil for $\Delta=(1,w_1,w_1^2,w_2)^{\mathsf T}$.}
Here
\[
\partial_3\Delta=0,\qquad
\widehat\Delta=(w_1,w_1^2,w_2)^{\mathsf T},\qquad
\widetilde P=J(\widehat\Delta)=
\begin{pmatrix}1&0&0\\ 2w_1&0&0\\ 0&1&0\end{pmatrix},
\]
of rank $2$: the top branch dies, $L_3\equiv0$, and both the bottom and the
middle branch are a priori available.

The frame viewpoint of \S\ref{subsec:n4} applies here as well, provided one
takes \emph{all} independent partial derivatives of $\Delta$, including the
second-order one: $(\Delta,\partial_1\Delta,\partial_1^2\Delta,\partial_2\Delta)$
is a frame of constant volume $2$, with
\[
N\Delta=0,\qquad
N\partial_1\Delta=(1,\,2w_1,\,0)^{\mathsf T},\qquad
N\partial_1^2\Delta=(0,\,2,\,0)^{\mathsf T},\qquad
N\partial_2\Delta=(0,\,0,\,1)^{\mathsf T},
\]
and expanding
$\partial_jX=c_{0j}\Delta+c_{1j}\partial_1\Delta+c_{2j}\partial_1^2\Delta
+c_{3j}\partial_2\Delta$ forces $c_{0j}=\partial_jX_1$, as always, and gives
the columns of $P=N\,J(X)$ as $(c_{1j},\,2w_1c_{1j}+2c_{2j},\,c_{3j})^{\mathsf T}$
(all verified by expansion). The difference with \S\ref{subsec:n4} lies in
what must be done with these first-order data: the branch equations below have
to be \emph{integrated} --- twice on the middle branch --- so it is more
efficient to work from the start with the integrated objects, the potentials
of \S\ref{subsec:forms}, of whose derivatives the frame coefficients are
triangular combinations: $P=J(\Gamma)+X_1\widetilde P$. With the potentials
\[
G_2=X_2-w_1X_1,\qquad G_3=X_3-w_1^2X_1,\qquad G_4=X_4-w_2X_1 ,
\]
the pencil determinant is quadratic in $\mu$,
\[
\det\bigl(J(\Gamma)+\mu\widetilde P\bigr)\;=\;D_0+\mu D_1+\mu^2D_2 ,
\]
with coefficients (writing $H:=G_3-2w_1G_2$)
\begin{align*}
D_2&=-\,\partial_3H,\\
D_1&=(\partial_2H)\,\partial_3G_4-(\partial_3H)\,\partial_2G_4
-\bigl(\partial_1G_2\,\partial_3G_3-\partial_1G_3\,\partial_3G_2\bigr),\\
D_0&=\det J(G_2,G_3,G_4),
\end{align*}
and the padded sweep of Lemma~\ref{lem:JL} satisfies the exact identity
\begin{equation}\label{eq:pencil5}
\det J(F_0)\;=\;\gamma\,\Bigl[D_0+(X_1+\gamma)D_1+(X_1+\gamma)^2D_2\Bigr],
\end{equation}
verified by expansion for generic data. Comparing coefficients of $\gamma$,
the two branches become:
\begin{itemize}
\item \emph{bottom ($L_1$-)branch}: $D_2\equiv0$, $D_1\equiv c\neq0$, and
$X_1=-D_0/c$; then $\det J(F_0)=c\,\gamma^2$;
\item \emph{middle ($L_2$-)branch}: $D_2\equiv c\neq0$, $X_1=-D_1/(2c)$, and
the \emph{master equation}
\begin{equation}\label{eq:master5}
4c\,D_0\;=\;D_1^{\,2} ;
\end{equation}
then $\det J(F_0)=c\,\gamma^3$.
\end{itemize}
The two branches behave very differently --- the bottom one is solved in
closed form and yields our counterexample, while the middle one turns out to
be rigid on a natural locus --- and we treat them in turn.

\subsubsection{The bottom branch: closed-form solution and the map $F_6$}
\label{subsubsec:bottom5}

Here $D_2=-\partial_3H\equiv0$ says that $H=G_3-2w_1G_2$ does not involve
$w_3$:
\[
G_3\;=\;2w_1G_2+\varphi(w_1,w_2),
\qquad\text{$\varphi$ a free bivariate polynomial.}
\]
The remaining condition $D_1\equiv c$ is then solved in closed form.
Substituting this expression for $G_3$ into the displayed formula for $D_1$
--- so that $\partial_3H=0$, $\partial_2H=\partial_2\varphi$,
$\partial_3G_3=2w_1\,\partial_3G_2$ and
$\partial_1G_3=2G_2+2w_1\partial_1G_2+\partial_1\varphi$ --- two of the four
terms cancel and $D_1$ collapses to
\[
D_1\;=\;(\partial_2\varphi)\,\partial_3G_4
\;+\;\bigl(2G_2+\partial_1\varphi\bigr)\,\partial_3G_2 :
\]
a single first-order equation for $G_4$, with $G_2$ and $\varphi$ free. Taking
$\varphi=\tau w_2$ ($\tau\neq0$ constant --- the choice that makes the
left-hand side a total $\partial_3$-derivative, and the one compatible with
the weights of the twist below), the condition $D_1\equiv c$ reads
$\partial_3\bigl(\tau G_4+G_2^{\,2}-c\,w_3\bigr)=0$ and integrates at once.
The complete solution of the branch is therefore
\[
G_2\in\C[w_1,w_2,w_3]\ \text{and}\ \psi\in\C[w_1,w_2]\ \text{arbitrary},\qquad
G_3=2w_1G_2+\tau w_2,\qquad
G_4=\frac{c\,w_3-G_2^{\,2}}{\tau}+\psi ,
\]
followed by $X_1=-D_0/c$ and $X_{i+1}=G_{i+1}+\delta_{i+1}X_1$, which give
$\det J(F_0)=c\gamma^2$ for \emph{every} such choice: the branch is
\emph{flexible} --- an arbitrary trivariate $G_2$ generates a solution.

We now choose data with genuine, nonlinear dependence on all three
parameters:
\[
\tau=c=1,\qquad
G_2=w_2+w_1w_3+w_1^2-w_1^3,\qquad
\varphi=w_2,\qquad
\psi=2w_1^4-2w_1^5 ,
\]
so that $G_3=2w_1G_2+w_2$, $G_4=w_3-G_2^{\,2}+2w_1^4-2w_1^5$, and
$X_1=-D_0$. Explicitly,
\begin{align*}
X_1&=-w_3+2w_2-2w_1+2w_1w_3+5w_1^2-2w_1^3+8w_1^4-10w_1^5,\\
X_2&=w_2+2w_1w_2-w_1^2+2w_1^2w_3+4w_1^3-2w_1^4+8w_1^5-10w_1^6,\\
X_3&=w_2+2w_1w_2+w_1^2w_3+2w_1^2w_2+2w_1^3w_3+3w_1^4-2w_1^5+8w_1^6-10w_1^7,\\
X_4&=w_3-w_2w_3+w_2^2-2w_1w_2+3w_1^2w_2-w_1^2w_3^2-2w_1^3w_3+w_1^4+2w_1^4w_3+8w_1^4w_2-10w_1^5w_2-w_1^6 ,
\end{align*}
of degrees $5,6,7,6$. The sweep $S=X+\gamma\Delta$ satisfies the polynomial
identity $\det J(S)=\gamma$ (variables ordered $(\gamma,w_1,w_2,w_3)$), the
threefold $X$ has $\operatorname{rank}J(X)=3$ generically, and $w_3$ enters $X$
quadratically: in particular $X$ does not factor through two parameters, which
rules out the suspensions that dominate the middle branch
(Theorem~\ref{thm:collapse5} below). The pure-$w_1$ tails
$t_2=w_1^2-w_1^3\subset G_2$ and $t_4=\psi=2w_1^4-2w_1^5$ serve a second
purpose: they arrange the base point that the twist below needs. Indeed, since
$S_1=X_1+\gamma$ and $S_{i+1}=G_{i+1}+\delta_{i+1}(X_1+\gamma)$, the base
condition $S(1,1,0,0)=0$ is equivalent to the four scalar conditions
\[
G_2(1,0,0)=G_3(1,0,0)=G_4(1,0,0)=0,\qquad
X_1(1,0,0)=-1\ \ \text{(i.e.\ }D_0(1,0,0)=c=1\text{)}.
\]
Because $G_3(1,0,0)=2\,G_2(1,0,0)$ and $G_4(1,0,0)=-G_2(1,0,0)^2+\psi(1,0)$,
the first three reduce to $t_2(1)=0$ and $t_4(1)=0$, which the displayed tails
satisfy; the last, $D_0(1,0,0)=1$, is a linear condition on the leading tail
coefficients and holds by direct evaluation (all verified exactly).

For the twist, assign the weights $(1,3,4)$ to $(w_1,w_2,w_3)$. Every monomial of
$X_i$ has weight $\ge i$ against the twist exponents $(1,2,3,4)$, so the
scalings
\[
w_1=\gamma u_1,\qquad w_2=\gamma^3u_2,\qquad w_3=\gamma^4u_3
\]
make $E_i:=S_i/\gamma^{\,i}$ a polynomial in $(\gamma,u_1,u_2,u_3)$ for
$i=1,2,3,4$, with $E_i(1,1,0,0)=0$ at the base point. The gradients there are
\[
\nabla E_2=(-16,-17,3,2),\qquad
\nabla E_3=(-17,-18,5,3),\qquad
\nabla E_4=(-2,-2,0,1).
\]
On $\C^5\ni(x,y,z_1,z_2,z_3)$ --- we name the last three source coordinates
$z_1,z_2,z_3$ to keep them clear of the sweep parameters --- put $v=(v_1,v_2,v_3,v_4)=(xy,\,x^2z_1,\,x^3z_2,\,x^4z_3)$
(of $x$-degrees $1,2,3,4$) and define the \emph{stage}, solving the
hierarchy \eqref{eq:side},
\[
\begin{aligned}
\gamma&=1-29v_1+999v_1^2+355v_1v_2-41553v_1^3+v_4,\\
u_1&=1+27v_1-5v_2,\\
u_2&=v_1-12v_2+\tfrac{2128}{5}v_1^2+v_3,\\
u_3&=-4v_1-10v_2 .
\end{aligned}
\]
Here the $v_1$-column $(-29,27,1,-4)$ spans the common kernel
$\ker\nabla E_2\cap\ker\nabla E_3\cap\ker\nabla E_4$, the $v_2$-column
$(0,-5,-12,-10)$ lies in $\ker\nabla E_3\cap\ker\nabla E_4$, the $v_3$- and
$v_4$-columns are $e_{u_2}$ and $e_\gamma$ with
$\nabla E_4\cdot e_{u_2}=0$, and the three correction vectors (the
coefficients of $v_1^2$, $v_1v_2$, $v_1^3$) are chosen inside
$\mathrm{span}(e_{u_2},e_\gamma)$ --- the span of the last two columns --- to
kill the obstructing Taylor coefficients of $E_3$ and $E_4$ along the
$v_1$-direction. Two consequences: first, $E_i$ composed with the stage is
divisible by $x^{\,i}$; second, because the corrections lie in the span of two
columns of the linear part, the stage is an \emph{elementary polynomial
automorphism} of $\C^4$ with constant Jacobian determinant $-290$ (each
correction's contribution to the Jacobian is a determinant with a repeated
column). With $w_1=\gamma u_1$, $w_2=\gamma^3u_2$, $w_3=\gamma^4u_3$ substituted
throughout, the sweep components are
\[
S_i\;=\;X_i(w_1,w_2,w_3)+\gamma\,\Delta_i(w_1,w_2)\quad(i=1,2,3,4),
\quad\text{i.e.}\quad
\begin{aligned}
S_1&=X_1+\gamma, & S_2&=X_2+\gamma\,w_1,\\
S_3&=X_3+\gamma\,w_1^2, & S_4&=X_4+\gamma\,w_2 ,
\end{aligned}
\]
with $X_1,\dots,X_4$ the polynomials displayed above. With $C=\gamma x$, define
\[
\boxed{\;F_6\;=\;\Bigl(C,\;\;\frac{S_1}{C},\;\;\frac{S_2}{C^2},\;\;
\frac{S_3}{C^3},\;\;\frac{S_4}{C^4}\Bigr)^{\mathsf T}\;:\;\C^5\to\C^5 .\;}
\]
As a composition diagram, in the format of \S\ref{subsec:twist},
\begin{multline*}
\begin{pmatrix}x\\ y\\ z_1\\ z_2\\ z_3\end{pmatrix}
\;\xrightarrow{\ \text{monomial}\ }\;
\begin{pmatrix}x\\ xy\\ x^2z_1\\ x^3z_2\\ x^4z_3\end{pmatrix}
\;\xrightarrow{\ \text{stage}\ }\;
\begin{pmatrix}x\\ \gamma\\ u_1\\ u_2\\ u_3\end{pmatrix}
\;\xrightarrow{\ \substack{w_1=\gamma u_1\\[1pt] w_2=\gamma^3u_2\\[1pt] w_3=\gamma^4u_3}\ }\;
\begin{pmatrix}x\\ \gamma\\ w_1\\ w_2\\ w_3\end{pmatrix}\\
\xrightarrow{\ \text{padded sweep}\ }\;
\begin{pmatrix}\gamma x\\ S_1\\ S_2\\ S_3\\ S_4\end{pmatrix}
\;\xrightarrow{\ \text{twist}\ }\;
\begin{pmatrix}C\\ S_1/C\\ S_2/C^2\\ S_3/C^3\\ S_4/C^4\end{pmatrix},
\end{multline*}
where the stage is the elementary polynomial automorphism displayed above
(over $x$), of constant Jacobian determinant $-290$; the Jacobian determinants
of the five arrows multiply to the constant in the chain factorization of the
proof of Theorem~\ref{thm:F6}.

\begin{theorem}\label{thm:F6}
$F_6$ is a polynomial map $\C^5\to\C^5$ with components of degrees
$7,38,40,42,44$; its Jacobian determinant is identically $-290$; and its
generic fiber consists of exactly $6$ points.
\end{theorem}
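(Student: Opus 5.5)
The proof follows the template of Theorems~\ref{thm:F4} and~\ref{thm:F5}: a chain factorization for the Jacobian, the side conditions \eqref{eq:side} for polynomiality, and an elimination to one univariate polynomial for the fiber count.

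\emph{Jacobian.} First I verify the sweep identity $\det J(S)=\gamma$ for the displayed threefold. By \eqref{eq:pencil5}, this amounts to $D_2\equiv0$, $D_1\equiv1$ and $X_1=-D_0$, which hold by the closed-form solution of the bottom branch. Padding with $\gamma x$ gives $\det J(F_0)=\gamma^2$. The stage has constant determinant $-290$ by Lemma~\ref{lem:stage}. Its corrections (at $v_1^2$, $v_1v_2$, $v_1^3$) lie in $\mathrm{span}(e_{u_2},e_\gamma)$, the span of the $v_3$- and $v_4$-columns, and involve only $v_1,v_2$; so the hypothesis of that lemma is met. The determinant of the linear part is a direct $4\times4$ computation. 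The chain \eqref{eq:chain} with discrete data $(d_j)=(1,3,4)$, $(e_i)=(1,2,3,4)$, $(m_j)=(1,2,3,4)$ then gives
\[
x^{10}\cdot(-290)\cdot\gamma^{8}\cdot\gamma^2\cdot(\gamma x)^{-10}=-290 .
\]
Both balancing conditions in \eqref{eq:discrete} hold: $\sum m_j=10=\sum e_i$ and $\sum d_j+2=10$.

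\emph{Polynomiality.} I check that every monomial of $X_i$ has weight $\ge i$ and that $1+\operatorname{wt}\Delta_i\ge i$. This makes $E_i=S_i/\gamma^i$ polynomial in $(\gamma,u)$. It remains to show $x^i\mid E_i\circ\text{stage}$, which by \eqref{eq:side} reduces to the triangular list:
\begin{itemize}
\item the base condition $E_i(1,1,0,0)=0$;
\item the kernel conditions on the $v_1$- and $v_2$-columns against the displayed gradients;
\item the inhomogeneous equations fixing the coefficients $999$, $355$, $-41553$, $\tfrac{2128}{5}$.
\end{itemize}
These are finitely many rational identities, each confirmed by exact expansion. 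The component degrees $7,38,40,42,44$ are then read off the expanded map.

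\emph{Fibers.} This is the main obstacle. Fix a target $v$ with $v_1\ne0$ and set $Y_i=v_1^{\,i}v_{i+1}$. Since $\Delta_1=1$, the first sweep equation gives $\gamma=Y_1-X_1(w)$. The remaining equations are
\[
G_2+w_1\mu=Y_2,\qquad G_3+w_1^2\mu=Y_3,\qquad G_4+w_2\mu=Y_4,\qquad \mu=Y_1 .
\]
Using $G_3=2w_1G_2+w_2$, the second equation becomes $w_2=Y_3-2w_1Y_2+w_1^2Y_1$, which is linear in $w_1$ alone. The first equation, $w_2+w_1w_3+w_1^2-w_1^3+w_1Y_1=Y_2$, is then linear in $w_3$ with coefficient $w_1$. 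Substituting both into $G_4=w_3-G_2^2+\psi$ should leave a single univariate polynomial $R(w_1;Y)$ after clearing the factor $w_1$.

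The plan is to show that $R$ has constant leading coefficient and degree $6$ (counting the $w_1=0$ branch correctly), since that degree is the geometric degree. The step I expect to fight with is exactly this bookkeeping: the denominator $w_1$ and possible spurious factors, analogous to the factor $w_1^2-1$ for $F_4$.

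Finally, at a rational witness target I exhibit exact gcd certificates. These show three things:
\begin{itemize}
\item $R$ is squarefree;
\item $R$ is coprime to the $\gamma=0$ resultant, so no root escapes;
\item $R$ is coprime to $w_1$, so $w_3$ is unique.
\end{itemize}
Each root then lifts uniquely through $x=v_1/\gamma$ and the triangular stage. This gives exactly $6$ points there, and by openness $6$ generically.
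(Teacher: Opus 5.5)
Your proposal is correct and follows the paper's proof: the same chain factorization $x^{10}\cdot(-290)\cdot\gamma^{8}\cdot\gamma^{2}\cdot(\gamma x)^{-10}$, exact verification of the side conditions, and the same triangularization through the potentials $G_2,G_3,G_4$ down to a univariate sextic with constant leading coefficient, certified at a witness target. The only difference is the elimination order: the paper takes $w_3$ from the $G_4$-equation, where its coefficient is $1$, and substitutes into the $G_2$-equation, so no denominator $w_1$ appears. Indeed your cleared polynomial is exactly $-R_Y$ with no spurious factor, and your coprimality-with-$w_1$ certificate is unnecessary.
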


The proof below establishes two further properties: $F_6$ fixes the axis
$\{y=z_1=z_2=z_3=0\}$ pointwise, and for every $C_0\neq0$ the fiber over
$(C_0,0,0,0,0)$ consists of exactly $4$ points --- the visible one and three
escape companions. Consequently $F_6$ is a non-injective Keller map in
dimension five, of geometric degree six, not equivalent to
$\Phi\times\mathrm{id}$ for any four-dimensional Keller map $\Phi$ of geometric
degree $\neq6$, since geometric degree is invariant under composition with
polynomial automorphisms and multiplicative on products.

\begin{proof}[Proof sketch]
The Jacobian statement is the chain factorization
\[
\det J(F_6)\;=\;
\underbrace{x^{10}}_{\text{monomials}}\cdot
\underbrace{(-290)}_{\text{stage}}\cdot
\underbrace{\gamma^{8}}_{\text{scalings }\gamma\cdot\gamma^3\cdot\gamma^4}\cdot
\underbrace{\gamma^{2}}_{\det J(F_0)=1\cdot\gamma^2}\cdot
\underbrace{(\gamma x)^{-10}}_{\text{twist exponents }1+2+3+4}
\;=\;-290 ,
\]
together with the divisibility statements above, all verified exactly (and the
constancy confirmed at $17$ random rational points). For the fibers, let
$Y\in\C^4$ be a sweep target. The system $S(\gamma,w)=Y$ \emph{triangularizes}:
$S_1=Y_1$ gives $\gamma=Y_1-X_1$; substituting into
$S_{i+1}=G_{i+1}+\delta_{i+1}(X_1+\gamma)$ turns the remaining three equations
into the potential equations
\[
G_2(w)=Y_2-w_1Y_1,\qquad G_3(w)=Y_3-w_1^2Y_1,\qquad G_4(w)=Y_4-w_2Y_1 ,
\]
By our choice of the data, $G_3=2w_1G_2+w_2$ and
$G_4=w_3-G_2^{\,2}+\psi(w_1)$ with $\psi=2w_1^4-2w_1^5$; hence
\[
w_2\;=\;G_3-2w_1G_2\;=\;\bigl(Y_3-w_1^2Y_1\bigr)-2w_1\bigl(Y_2-w_1Y_1\bigr)
\;=\;Y_3-2w_1Y_2+w_1^2Y_1
\]
and, inserting $G_2=Y_2-w_1Y_1$ and this value of $w_2$ (the mixed terms
cancel identically),
\[
w_3\;=\;Y_4-w_2Y_1+G_2^{\,2}-\psi(w_1)
\;=\;Y_4+Y_2^{\,2}-Y_1Y_3+2w_1^5-2w_1^4 .
\]
Substituting both expressions into the one equation not yet used,
$G_2\bigl(w_1,w_2(w_1),w_3(w_1)\bigr)=Y_2-w_1Y_1$, leaves the univariate \emph{fiber polynomial}
\[
R_Y(w_1)\;=\;2w_1^6-2w_1^5-w_1^3+(Y_1+1)\,w_1^2
+\bigl(Y_4+Y_2^{\,2}-Y_1Y_3-2Y_2+Y_1\bigr)\,w_1+\bigl(Y_3-Y_2\bigr)\;=\;0 ,
\]
of degree exactly $6$ with \emph{constant} leading coefficient $2$. Since the
stage is a polynomial automorphism and the monomial conjugation is bijective
wherever $x\neq0$, the fiber of $F_6$ over a target with first coordinate
$C_0\neq0$ is in bijection with the roots of $R_Y$ having $\gamma=Y_1-X_1\neq0$,
via $x=C_0/\gamma$. For random rational targets $R_Y$ is squarefree with all
six roots satisfying $\gamma\neq0$ (verified exactly for the squarefreeness and
numerically for the roots), giving generic fiber size $6$. Over
$Y=0$ the fiber polynomial factors,
\[
R_0(w_1)\;=\;w_1^2\,(w_1-1)\,(2w_1^3-1),
\]
and $\gamma=-X_1$ vanishes at the double root $w_1=0$ (which is therefore
deleted by the twist) while $\gamma=1$ at $w_1=1$ and
$\gamma=3w_1^2-3w_1=3w_1(w_1-1)$ at the three roots of $2w_1^3=1$; since
$3w_1(w_1-1)$ vanishes only at $w_1\in\{0,1\}$, which are not roots of
$2w_1^3-1$, all four remaining points survive, and the fiber over
$(C_0,0,0,0,0)$ has exactly $4$ points --- the visible one
$(C_0,0,0,0,0)$ (note $S(1,1,0,0)=0$ makes $F_6$ fix the axis) and three
escape companions.
\end{proof}

The first component of $F_6$ is
\[
F_{6,1}\;=\;x-29x^2y+999x^3y^2-41553x^4y^3+355x^4yz_1+x^5z_3 ,
\]
and the remaining components have $342$, $421$, $507$ and $904$ terms
respectively; they are reproduced, together with all verification scripts, in
the ancillary files. Unlike $F_4$ and $F_5$, whose geometric degrees ($5$ and
$10$) were established by elimination of comparable difficulty, here the
triangular collapse of the fiber system to the sextic $R_Y$ makes the
five-dimensional example the \emph{easiest} of the three to analyze --- a
direct payoff of the potentials $(G_2,G_3,G_4)$ adapted to the mixed direction
field.

\subsubsection{The middle branch: rigidity}\label{subsubsec:middle5}

Here $D_2\equiv c$ integrates to
\[
G_3\;=\;2w_1G_2-c\,w_3+\varphi(w_1,w_2),
\qquad\text{$\varphi$ a free bivariate polynomial,}
\]
$X_1=-D_1/(2c)$ is determined, and the data $(G_2,G_4,\varphi)$ remain
constrained by the nonlinear master equation \eqref{eq:master5}: unlike its
neighbour, this branch does not integrate in closed form. Introduce the
\emph{characteristic field} and its invariant coordinate
\[
\mathcal W\;=\;c\,\partial_2+(\partial_2\varphi)\,\partial_3 ,
\qquad
\theta\;=\;w_3-\varphi(w_1,w_2)/c
\qquad(\mathcal W\theta=0,\ \ \mathcal W w_1=0).
\]
The quadratic part of \eqref{eq:master5} in the derivatives of $G_4$ is the
rank-one form $(\mathcal W G_4)^2$, so \eqref{eq:master5} is a
Hamilton--Jacobi-type equation degenerate along $\mathcal W$.

\begin{theorem}[collapse on the characteristic-invariance locus]\label{thm:collapse5}
Let $c\neq0$, let $\varphi(w_1,w_2)$ be arbitrary, and suppose the datum $G_2$ is
constant along the characteristic field, $\mathcal WG_2=0$, i.e.\
$G_2=G_2(w_1,\theta)$. Set
\[
K(w_1,\theta)\;=\;c\,\frac{\partial G_2}{\partial w_1}\Big|_{\theta}
\;+\;\frac{\partial\,(G_2^{\,2})}{\partial\theta}.
\]
Then the master equation \eqref{eq:master5} factors as a perfect square,
\[
4c\,D_0-D_1^{\,2}\;=\;-\bigl(\mathcal WG_4-K\bigr)^2 ,
\]
so it linearizes to the transport equation $\mathcal WG_4=K$, whose general
polynomial solution is $G_4=(w_2/c)\,K(w_1,\theta)+h(w_1,\theta)$ with $h$
arbitrary. For every such solution the components $X_1,X_2,X_3$ of the swept
threefold depend on $(w_1,\theta)$ alone, $X_4=h(w_1,\theta)$, and in the
parameters $(\gamma,w_1,\theta,w_2)$ the sweep is the triangular suspension
\[
S\;=\;\bigl(T(\gamma,w_1,\theta),\ \gamma w_2+h(w_1,\theta)\bigr),
\qquad
T\;=\;\widehat X(w_1,\theta)+\gamma\,(1,w_1,w_1^2)^{\mathsf T},
\qquad
\det J(T)=c\,\gamma:
\]
a four-dimensional bottom-branch sweep with direction field $(1,w_1,w_1^2)^{\mathsf T}$,
extended by one shear-type coordinate. In particular no datum with
$\mathcal WG_2=0$ produces a genuinely five-dimensional example.
\end{theorem}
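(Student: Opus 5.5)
The plan is to pass to the adapted coordinates $(w_1,w_2,\theta)$, with $\theta=w_3-\varphi(w_1,w_2)/c$, and compute $D_0,D_1,D_2$ there. This substitution is a triangular polynomial automorphism of the parameter space with Jacobian determinant $1$. In the new coordinates $\partial_\theta=\partial_3$, $\mathcal W=c\,\partial_{w_2}|_{w_1,\theta}$, and $\partial_1|_{w_2,w_3}=\partial_1|_{\theta}-(\partial_1\varphi/c)\,\partial_\theta$. The hypothesis $\mathcal WG_2=0$ then says exactly that $G_2$ does not involve $w_2$ in these coordinates, and polynomiality is preserved in both directions. On the middle branch we have $G_3=2w_1G_2-cw_3+\varphi=2w_1G_2-c\theta$, so $H=-c\theta$ and $D_2=c$.

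\emph{Step 1: $D_1$.} Substitute $\partial_3H=-c$ and $\partial_2H=\partial_2\varphi$ into the displayed formula for $D_1$. The first two terms combine to $(\partial_2\varphi)\partial_3G_4+c\,\partial_2G_4=\mathcal WG_4$. For the bracket, use $\partial_3G_3=2w_1\partial_3G_2-c$ and $\partial_1G_3=2G_2+2w_1\partial_1G_2+\partial_1\varphi$. The $w_1$-terms cancel, and the bracket equals $-c\,\partial_1G_2-(2G_2+\partial_1\varphi)\partial_3G_2$. By the chain rule above, $c\,\partial_1G_2+\partial_1\varphi\,\partial_\theta G_2=c\,\partial_1|_\theta G_2$. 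Hence
\[
D_1=\mathcal WG_4+K .
\]

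\emph{Step 2: $D_0$.} Row-reduce $J(G_2,G_3,G_4)$ by replacing $\nabla G_3$ with $\nabla G_3-2w_1\nabla G_2=\nabla(-c\theta)+2G_2\,\nabla w_1$. Since the coordinate change has determinant $1$, the result may be evaluated in $(w_1,w_2,\theta)$. The $w_2$-column has a single nonzero entry, $\partial_{w_2}G_4=\mathcal WG_4/c$, because $G_2$ and $\theta$ are free of $w_2$. Expanding along that column gives
\[
D_0=\frac{\mathcal WG_4}{c}\bigl(c\,\partial_1|_\theta G_2+2G_2\partial_\theta G_2\bigr)=\frac{\mathcal WG_4\cdot K}{c}.
\]
Then $4cD_0-D_1^2=4\,\mathcal WG_4\,K-(\mathcal WG_4+K)^2=-(\mathcal WG_4-K)^2$, which is the claimed perfect square. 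So \eqref{eq:master5} is equivalent to the transport equation $c\,\partial_{w_2}G_4=K(w_1,\theta)$. Because $K$ is free of $w_2$, the general polynomial solution is $G_4=(w_2/c)K+h(w_1,\theta)$.

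\emph{Step 3: the sweep.} From Step 1, $D_1=2K$, so $X_1=-D_1/(2c)=-K/c$ depends only on $(w_1,\theta)$. Then:
\begin{itemize}
\item $X_2=G_2+w_1X_1$ and $X_3=G_3+w_1^2X_1=2w_1G_2-c\theta+w_1^2X_1$ depend only on $(w_1,\theta)$;
\item $X_4=G_4+w_2X_1=h$.
\end{itemize}
Since $\Delta=(1,w_1,w_1^2,w_2)^{\mathsf T}$, the sweep is $S=(T,\ \gamma w_2+h)$ with $T=\widehat X(w_1,\theta)+\gamma(1,w_1,w_1^2)^{\mathsf T}$. In the variables $(\gamma,w_1,\theta,w_2)$ the $w_2$-column of $J(S)$ is $(0,0,0,\gamma)^{\mathsf T}$, so $\det J(S)=\gamma\det J(T)$. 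The middle-branch normalization gives $\det J(F_0)=c\gamma^3$, hence $\det J(S)=c\gamma^2$, and the change of parameters does not alter this. Therefore $\det J(T)=c\gamma$, which is a four-dimensional bottom-branch sweep in the sense of \S\ref{subsec:specI}. The final sentence of the theorem follows because $S$ is a shear-type suspension of $T$.

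The only delicate point is bookkeeping in Step 2: signs in the cofactor expansion and the mixed-coordinate derivatives. Any sign slip there would turn the perfect square into $(\mathcal WG_4+K)^2$ or leave a cross term. I would check it first on the special case $\varphi=0$, where $\theta=w_3$.
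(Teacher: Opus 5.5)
Your proposal is correct and follows the paper's route. Both compute $D_1=\mathcal WG_4+K$ and $D_0=K\,\mathcal WG_4/c$ from the displayed formulas and then read off $X_1=-K/c$, $X_4=h$ and the suspension; the only difference is that the paper stays in $(w_1,w_2,w_3)$ and expands $D_0$ along the $G_4$-row (the $\partial_1G_4$-cofactor there is $-\mathcal WG_2=0$), a vanishing your adapted coordinates make automatic.

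There is one sign slip, which you share with the paper's statement and proof: the reparametrization $(\gamma,w_1,w_2,w_3)\mapsto(\gamma,w_1,\theta,w_2)$ is a unimodular shear followed by a transposition, so it has Jacobian determinant $-1$. Hence, in that order, $\det J(S)=-c\gamma^2$ and $\det J_{(\gamma,w_1,\theta)}(T)=-c\gamma$; check the case $\varphi=G_2=h=0$, $c=1$, where $T=(\gamma,\gamma w_1,\gamma w_1^2-\theta)$. The sign is harmless for the bottom-branch conclusion, but your sentence that the change of parameters does not alter the determinant is not right as stated.
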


\begin{proof}
All statements are direct computations from the displayed formulas for
$D_0,D_1,D_2$. With $H=-cw_3+\varphi$ one finds
$\partial_2H=\partial_2\varphi$, $\partial_3H=-c$, and, using
$\partial_2G_2=-((\partial_2\varphi)/c)\partial_\theta G_2$,
$\partial_3G_2=\partial_\theta G_2$:
the coefficient of $\partial_1G_4$ in $D_0$ is
$-(c\,\partial_2G_2+(\partial_2\varphi)\partial_3G_2)=-\mathcal WG_2=0$; the remaining
two coefficients of $D_0$ collapse to $(K/c)\,\mathcal W$, giving
$D_0=(K/c)\,\mathcal WG_4$; and $D_1=\mathcal WG_4+K$. Hence
$4cD_0-D_1^2=4K\,\mathcal WG_4-(\mathcal WG_4+K)^2=-(\mathcal WG_4-K)^2$.
Since $\mathcal W$ annihilates functions of $(w_1,\theta)$ and
$\mathcal W\bigl((w_2/c)K\bigr)=K$, the transport equation has the stated general
solution. Finally $X_1=-D_1/(2c)=-K/c$ depends on $(w_1,\theta)$ only, hence so
do $X_2=G_2+w_1X_1$ and
$X_3=G_3+w_1^2X_1=2w_1G_2+\varphi-cw_3+w_1^2X_1=2w_1G_2-c\theta+w_1^2X_1$, while
$X_4=G_4+w_2X_1=h$; and $\partial S_4/\partial w_2=\gamma$ with
$S_1,S_2,S_3$ independent of $w_2$ gives $\det J(S)=\gamma\det J(T)$, so
$\det J(T)=c\gamma$. All identities were also verified by exact expansion on
randomized data.
\end{proof}

\begin{remark}
The collapse is not confined to the locus $\mathcal WG_2=0$. The
complementary explicit families we have solved --- e.g.\ $\varphi=0$,
$G_2$ independent of $w_3$ and $G_4$ linear in $w_3$ with constant coefficient ---
also collapse, in a different mode ($w_3$ enters all components linearly through
constant coefficients, and a linear change of target coordinates splits off a
graph coordinate).
\end{remark}

\begin{problem}[rigidity of the middle branch]\label{prob:rigid5}
Does the master equation \eqref{eq:master5} admit a polynomial solution that is
not equivalent to a suspension of a four-dimensional sweep? We conjecture that
it does not. If so, this is a genuine rigidity phenomenon: for the mixed field
$\Delta=(1,w_1,w_1^2,w_2)^{\mathsf T}$ the middle escape $\det J(F_0)=c\gamma^3$ is
unrealizable by irreducibly five-dimensional data, even though it is
unobstructed at the level of branch counting.
\end{problem}

\subsection{The curve-type family: a uniform solution in every dimension}
\label{subsec:curvefam}

We now return to the first direction field of the classification above and
treat it uniformly for all $n\ge3$:
\[
\Delta\;=\;\bigl(1,\,w_1,\,w_1^2,\,\dots,\,w_1^{\,n-2}\bigr)^{\mathsf T},
\]
the rational normal curve, depending on the single parameter $w_1$. Here the
normalization system admits a \emph{complete} solution, and the solutions for
$n=3,4,5$ fall into a single pattern.

\begin{theorem}[reduction for the curve-type field]\label{thm:curvered}
Let $n\ge3$ and $\Delta=(1,w_1,\dots,w_1^{\,n-2})^{\mathsf T}$. With the
potentials $\Gamma=(G_2,\dots,G_{n-1})$ of \S\ref{subsec:forms}, set
\[
H_{i+1}\;:=\;G_{i+1}-i\,w_1^{\,i-1}G_2\qquad(2\le i\le n-2).
\]
Then $\widetilde P=\widehat\Delta{}'\,e_1^{\mathsf T}$ is of rank one, with
$\widehat\Delta{}'=(1,2w_1,\dots,(n-2)w_1^{\,n-3})^{\mathsf T}$, the pencil
is linear in $\mu$,
\[
\det\bigl(J(\Gamma)+\mu\widetilde P\bigr)\;=\;D_0+\mu D_1,
\qquad
D_0=\det J(\Gamma),
\qquad
D_1\;=\;\det J_{(w_2,\dots,w_{n-2})}\bigl(H_3,\dots,H_{n-1}\bigr),
\]
and the (unique) branch normalization $D_1\equiv c\neq0$ says precisely:
\begin{itemize}
\item[(i)] the family
$H(w_1;\,\cdot\,)=(H_3,\dots,H_{n-1})^{\mathsf T}\colon\C^{n-3}\to\C^{n-3}$ has constant
Jacobian determinant $c$ in the transverse parameters $(w_2,\dots,w_{n-2})$,
for every value of the spectator parameter $w_1$;
\item[(ii)] $G_2\in\C[w]$ is completely free.
\end{itemize}
Given such data, $X_1=-D_0/c$ and $X_{i+1}=G_{i+1}+w_1^{\,i}X_1$ yield
$\det J(F_0)=c\,\gamma^2$.
\end{theorem}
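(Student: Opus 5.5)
The proof is linear algebra in the pencil of \S\ref{subsec:forms}, followed by one column operation and the exact identity \eqref{eq:wedge}.

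\textbf{Step 1: the shape of $\widetilde P$.} Since $\delta_1=1$, \S\ref{subsec:forms} gives $\widetilde P=J(\widehat\Delta)$ with $\widehat\Delta=(w_1,w_1^2,\dots,w_1^{n-2})^{\mathsf T}$. Only $\partial_1$ acts nontrivially on $\widehat\Delta$, so $J(\widehat\Delta)=\widehat\Delta{}'\,e_1^{\mathsf T}$, where $\widehat\Delta{}'=(1,2w_1,\dots,(n-2)w_1^{n-3})^{\mathsf T}$. This matrix has rank one.

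\textbf{Step 2: linearity of the pencil and the formulas for $D_0,D_1$.} Only the first column of $J(\Gamma)+\mu\widetilde P$ involves $\mu$, and that column is $\partial_1\Gamma+\mu\widehat\Delta{}'$. Multilinearity in the first column gives
\[
\det\bigl(J(\Gamma)+\mu\widetilde P\bigr)=D_0+\mu D_1,\qquad D_0=\det J(\Gamma),\qquad D_1=\det\bigl(\widehat\Delta{}',\partial_2\Gamma,\dots,\partial_{n-2}\Gamma\bigr).
\]
To evaluate $D_1$, I apply row operations of determinant one: subtract $i\,w_1^{i-1}$ times the first row (the $G_2$-row) from the row of $G_{i+1}$, for $2\le i\le n-2$. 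This turns $\widehat\Delta{}'$ into $e_1$. In columns $j\ge2$ the modified row of $G_{i+1}$ becomes $\partial_jG_{i+1}-i w_1^{i-1}\partial_jG_2$. Because $w_1$ is a spectator for $\partial_j$ with $j\ge2$, this equals $\partial_j H_{i+1}$. Expanding along the first column then leaves exactly $D_1=\det J_{(w_2,\dots,w_{n-2})}(H_3,\dots,H_{n-1})$, and $G_2$ has dropped out.

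\textbf{Step 3: the branch conditions and the construction.} By \eqref{eq:wedge} with $\mu=X_1+\gamma$,
\[
\det J(F_0)=\gamma\bigl(D_0+(X_1+\gamma)D_1\bigr).
\]
Since $\operatorname{rank}\widetilde P=1$, only the bottom branch exists. Comparing coefficients in $\gamma$, that branch requires $D_1\equiv c\neq0$ (the $\gamma^2$ coefficient) and $D_0+X_1D_1=0$ (the $\gamma$ coefficient, which is tangency). Both conditions are met by taking $X_1=-D_0/c$.

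By Step 2, the condition $D_1\equiv c$ is a condition on $(H_3,\dots,H_{n-1})$ alone, namely statement~(i). No constraint falls on $G_2$, which gives (ii). The map $(G_2,H_3,\dots,H_{n-1})\mapsto\Gamma$ is triangular and invertible, so the data in (i) and (ii) are free and independent of each other.

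Finally, $X_{i+1}=G_{i+1}+\delta_{i+1}X_1=G_{i+1}+w_1^iX_1$ inverts the definition of the potentials. With this $X$ the displayed identity reduces to $\det J(F_0)=c\gamma^2$. Polynomiality of $X_1=-D_0/c$ is automatic because $c$ is a nonzero constant.

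The only delicate point is Step 2: the row reduction must be arranged so that the $\partial_1$-column is cleared and the transverse block becomes literally $J_{(w_2,\dots)}(H)$. The first-column cofactor expansion is what makes $\partial_1 G_2$ and $\partial_1 H$ irrelevant, so I would write that step out in full. A small additional check covers $n=3$, where the transverse block is empty, $D_1=1$ by convention, and the statement reduces to the plane case with $\widehat\Delta=(w_1)$.
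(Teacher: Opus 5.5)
Your proposal is correct and follows the paper's proof essentially step for step. The steps are the same: the rank-one form $\widetilde P=\widehat\Delta{}'e_1^{\mathsf T}$, linearity of the pencil through the first column, the row operations subtracting $i\,w_1^{\,i-1}$ times the $G_2$-row (which commute with $\partial_j$ for $j\ge2$) followed by first-column expansion, and finally $X_1=-D_0/c$ inserted into \eqref{eq:wedge}. Your added remark that $(G_2,H_3,\dots,H_{n-1})\mapsto\Gamma$ is triangular and invertible makes explicit the justification of ``says precisely'' that the paper leaves implicit.
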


\begin{proof}
Since $\Delta$ depends on $w_1$ alone, $J(\Delta)$ has only its first column
nonzero and $\widetilde P=N\,J(\Delta)=\widehat\Delta{}'\,e_1^{\mathsf T}$;
a rank-one summand makes the pencil determinant linear
in $\mu$, with $\mu$-coefficient $D_1=\det(\widehat\Delta{}',\partial_2\Gamma,\dots,
\partial_{n-2}\Gamma)$. Subtracting $i\,w_1^{\,i-1}$ times the first row from
the $i$-th row ($2\le i\le n-2$) reduces the column $\widehat\Delta{}'$ to $e_1$; these row
operations commute with $\partial_j$ for $j\ge2$ because the multipliers
depend on $w_1$ alone, and they replace $\partial_jG_{i+1}$ by
$\partial_jH_{i+1}$. Expanding along the first column leaves the stated
$(n-3)\times(n-3)$ Jacobian. Finally $X_1=-D_0/c$ gives
$\det J(S)=D_0+(X_1+\gamma)c=c\gamma$, and padding contributes the extra
$\gamma$. All identities were verified by exact expansion on randomized data.
\end{proof}

\begin{remark}
The frame viewpoint of \S\ref{subsec:n4} is available here too: the collection
$(\Delta,\Delta',\dots,\Delta^{(n-2)})$ of \emph{all} independent partial
derivatives of the rational normal field is a frame of constant volume
$\prod_{k=0}^{n-2}k!$ ($=2$ for $n=4$ and $12$ for $n=5$, the values already
met), and expanding the columns of $J(X)$ in it --- the coefficient of
$\Delta$ being $\partial_jX_1$, as always --- reproduces the pencil above; the
row reduction in the proof is precisely the triangular change between the
frame coefficients and the derivatives of the potentials. We phrase the
theorem through the potentials because the construction ultimately
\emph{integrates} the data: $G_2$ enters the solution undifferentiated, and
the fiber theory of Proposition~\ref{prop:curvefib} below is written directly
in terms of $\Gamma$.
\end{remark}

The pattern for small $n$ is now transparent. For $n=3$ there are no
transverse parameters: the condition (i) is empty ($D_1=1$, $c=1$), $G_2(w_1)$
is the only datum, and $X=(-G_2',\,G_2-w_1G_2')^{\mathsf T}$ sweeps the envelope of the
line family $\{Y_2=w_1Y_1+G_2(w_1)\}$ --- the classical Clairaut picture, with
$G_2$ the support function; the whole of \S\ref{sec:sweep} is the case $n=3$.
For $n=4$ the condition reads $\partial_2H_3=c$, i.e.\ $H_3=cw_2+\varphi(w_1)$;
taking $G_2$ \emph{linear} in $w_2$ recovers exactly the ruled surfaces of
Specialization~I, while a nonlinear $G_2$ gives non-ruled solutions that the
ruled ansatz of \S\ref{subsec:specI} does not see. For $n=5$ the condition
asks for a $w_1$-family of \emph{unit-Jacobian pairs}
$(H_3,H_4)$ in $(w_2,w_3)$ --- the first case where (i) is a genuine
Keller-type condition on the data.

\begin{proposition}[univariate fiber theory]\label{prop:curvefib}
Assume in addition that each $H(w_1;\,\cdot\,)$ is a polynomial automorphism
of $\C^{n-3}$ whose inverse depends polynomially on $w_1$ (e.g.\ any
triangular family). Then for every target $Y\in\C^{n-1}$ the fiber system
$S(\gamma,w)=Y$ triangularizes: $\gamma=Y_1-X_1$, the potential equations
become $G_{i+1}(w)=Y_{i+1}-w_1^{\,i}Y_1$, hence
\[
H\bigl(w_1;\,w_2,\dots,w_{n-2}\bigr)\;=\;h(w_1),
\qquad
h_{i+1}\;=\;Y_{i+1}-i\,w_1^{\,i-1}Y_2+(i-1)\,w_1^{\,i}Y_1 ,
\]
so $(w_2,\dots,w_{n-2})=H^{-1}(w_1;h(w_1))$ are polynomial functions of $w_1$,
and the last remaining equation is the univariate \emph{fiber polynomial}
\[
R_Y(w_1)\;:=\;G_2\bigl(w_1,\,H^{-1}(w_1;h(w_1))\bigr)-\bigl(Y_2-w_1Y_1\bigr)
\;=\;0 .
\]
The fibers of the twisted map over targets with nonzero first coordinate are
in bijection with the roots of $R_Y$ at which $\gamma\neq0$. For $n=3$, $R_Y$
is the tangency polynomial of \S\ref{sec:sweep}.
\end{proposition}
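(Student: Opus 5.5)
The plan is to follow the same triangularization already carried out for $F_6$ in the proof of Theorem~\ref{thm:F6}, now in general dimension, and then to transport the resulting bijection through the twist. First I solve the sweep system $S(\gamma,w)=Y$ with $S=X+\gamma\Delta$. Since $\Delta_1=1$, the first equation gives $\gamma=Y_1-X_1$, so $X_1+\gamma=Y_1$. Using $S_{i+1}=G_{i+1}+w_1^{\,i}(X_1+\gamma)$, which holds because $X_{i+1}=G_{i+1}+w_1^{\,i}X_1$, the remaining equations become exactly the potential equations
\[
G_{i+1}(w)=Y_{i+1}-w_1^{\,i}Y_1 .
\]
Next I form the combinations $H_{i+1}=G_{i+1}-i\,w_1^{\,i-1}G_2$ of Theorem~\ref{thm:curvered}. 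Substituting $G_2=Y_2-w_1Y_1$ gives
\[
H_{i+1}=Y_{i+1}-w_1^{\,i}Y_1-i\,w_1^{\,i-1}(Y_2-w_1Y_1)=h_{i+1}(w_1),
\]
which is the stated formula after collecting $(-1+i)w_1^{\,i}Y_1$. The original system $\{\gamma=Y_1-X_1,\ G_2=Y_2-w_1Y_1,\ G_{i+1}=\dots\}$ is therefore equivalent to $\{\gamma=Y_1-X_1,\ G_2=Y_2-w_1Y_1,\ H=h(w_1)\}$, because the passage from the $G$'s to the $H$'s is unitriangular with multipliers in $\C[w_1]$. This equivalence is the point I have to state carefully, since it is what gives a bijection and not just an inclusion.

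Second, I use the hypothesis that $H(w_1;\cdot)$ is an automorphism of $\C^{n-3}$ for each $w_1$, with inverse polynomial in $w_1$. Then $H(w_1;w')=h(w_1)$ holds if and only if $w'=H^{-1}(w_1;h(w_1))$, which is a polynomial in $w_1$. Substituting this into the remaining equation $G_2=Y_2-w_1Y_1$ gives $R_Y(w_1)=0$. The conclusion is that the map $w_1\mapsto(\gamma,w_1,w')$, with $w'$ and $\gamma=Y_1-X_1$ evaluated as polynomials in $w_1$, is a bijection from the distinct roots of $R_Y$ onto $S^{-1}(Y)$. Each root carries a unique $(w',\gamma)$, and every solution arises from its $w_1$-coordinate. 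For $n=3$ there are no transverse parameters, so $R_Y=G_2(w_1)-Y_2+w_1Y_1$. I compare this with \eqref{eq:tangency} via the Clairaut description $X=(-G_2',G_2-w_1G_2')$; the two agree up to the direction-field normalization $(1,w_1)$ versus $(2,w)$, that is, after rescaling.

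Third, I pass to the twisted map $F=(C,S_i/C^{e_i})$. Suppose the target has first coordinate $C_0\neq0$. A source point in its fiber has $x\neq0$ and $\gamma\neq0$, and its image under the monomial map, the stage (a polynomial automorphism by Lemma~\ref{lem:stage}) and the scalings $w_j=\gamma^{d_j}u_j$ is a sweep solution over $Y_i=C_0^{\,e_i}\cdot(\text{$i$-th target coordinate})$. Conversely, a sweep solution with $\gamma\neq0$ determines $x=C_0/\gamma$. It then determines $u_j=w_j/\gamma^{d_j}$ and the stage values, and the automorphism inverse recovers $v$ and hence $y_j=v_j/x^{m_j}$ uniquely. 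Solutions with $\gamma=0$ have no preimage, because $C=\gamma x\neq0$ forces $\gamma\neq0$.

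I expect the main obstacle to be bookkeeping rather than any idea. The points needing care are that the row operations on the $G$'s give an equivalent system and not merely a consequence, and that the inverse in the third step respects the scalings, which requires $\gamma\ne0$ exactly. I would handle the second point by writing the chain of maps explicitly, as in the composition diagram for $F_6$, and inverting each arrow on the open set $\{x\gamma\neq0\}$.
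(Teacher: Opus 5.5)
Your proposal is correct and follows essentially the paper's route: the triangularization ($\gamma=Y_1-X_1$, the potential equations, the unitriangular passage to $H=h(w_1)$, then inversion of $H(w_1;\cdot)$) is exactly what the statement encodes, and the transfer through the twist is the argument used in the proof of Theorem~\ref{thm:F6}, inverting each arrow of the composition on $\{\gamma x\neq0\}$ with $x=C_0/\gamma$. One small indexing slip: the sweep target is $Y_i=C_0^{\,e_i}$ times the $(i+1)$-st target coordinate, since the first target coordinate is $C_0$ itself.
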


\begin{remark}[counterexamples propagate along $n\mapsto n+3$]
Condition (i) of Theorem~\ref{thm:curvered} asks for a $w_1$-family of
constant-Jacobian --- Keller-type --- maps of $\C^{n-3}$. Any family of
(tame) polynomial automorphisms qualifies, and then
Proposition~\ref{prop:curvefib} applies. But for $n\ge6$ one may instead take
$H$ to be a \emph{constant} family equal to a Keller counterexample in $n-3$
variables: Alp\"oge's $F$ for $n=6$, the maps $F_4$ or $F_5$ for $n=7$, $F_6$
or $F_7$ for $n=8$, and so on --- the construction feeds on its own output,
three dimensions at a time. (The sweep-level statement is unconditional;
carrying out the monomial twist requires the same finitely many side
conditions as in the examples above.) For such non-invertible $H$ the fiber
system is no longer triangular, and its geometry is unexplored.
\end{remark}

\medskip\noindent\textbf{An explicit five-dimensional example.} Take $n=5$,
$c=1$, and the data
\[
H_3=w_2+w_1^3+w_1^2w_3,\qquad
H_4=w_3+w_1^4,\qquad
G_2=w_2^{\,2}+w_2+w_1w_3+w_1^3 ,
\]
so that $\det J_{(w_2,w_3)}(H_3,H_4)\equiv1$, the pair
$H(w_1;\,\cdot\,)$ is a triangular automorphism for every $w_1$, and $G_2$ is
genuinely nonlinear in the transverse parameters. Then $G_3=2w_1G_2+H_3$,
$G_4=3w_1^2G_2+H_4$, $X_1=-\det J(G_2,G_3,G_4)$, and the components
$X_1,\dots,X_4$ (degrees $7,8,9,10$) are reconstructed as in
Theorem~\ref{thm:curvered}; the sweep $S=X+\gamma\Delta$ satisfies
$\det J(S)=\gamma$ and passes through the origin at the base point,
$S(1,1,0,-1)=0$, with transversality constant $D_0(1,0,-1)=1$ (the pure-$w_1$
terms of $H_3$, $H_4$, $G_2$ are tuned for exactly this). Assign the weights
$(1,3,4)$ to $(w_1,w_2,w_3)$; every monomial of $X_i$ has weight $\ge i$, so
the scalings $w_1=\gamma u_1$, $w_2=\gamma^3u_2$, $w_3=\gamma^4u_3$ make
$E_i=S_i/\gamma^{\,i}$ polynomial, with base point
$(\gamma,u_1,u_2,u_3)=(1,1,0,-1)$ and gradients
\[
\nabla E_2=(-15,0,-3,4),\qquad
\nabla E_3=(-20,3,-1,6),\qquad
\nabla E_4=(-19,8,-1,7).
\]
On $\C^5\ni(x,y,z_1,z_2,z_3)$ with $v=(xy,\,x^2z_1,\,x^3z_2,\,x^4z_3)$ the
stage is
\[
\begin{aligned}
\gamma&=1-61v_1+9012v_1^2+\tfrac{238}{19}v_1v_2-\tfrac{35823670}{19}v_1^3+8v_3+v_4,\\
u_1&=1+59v_1+v_2-5178v_1^2+19v_3,\\
u_2&=-7v_1-27v_2,\\
u_3&=-1-234v_1-5v_2 ,
\end{aligned}
\]
built by the same rules as for $F_6$: the $v_1$-column $(-61,59,-7,-234)$
spans the common kernel of the three gradients, the $v_2$-column
$(0,1,-27,-5)$ lies in $\ker\nabla E_3\cap\ker\nabla E_4$, the $v_3$- and
$v_4$-columns $(8,19,0,0)$ and $e_\gamma$ satisfy $\nabla E_4\cdot(8,19,0,0)=0$,
and all correction vectors lie in the span of the last two columns, so the
stage is an elementary polynomial automorphism of constant Jacobian
determinant $119377$. With $C=\gamma x$ and
$S_i=X_i(w_1,w_2,w_3)+\gamma\,w_1^{\,i-1}$ substituted throughout, define
\[
\boxed{\;F_7\;=\;\Bigl(C,\;\;\frac{S_1}{C},\;\;\frac{S_2}{C^2},\;\;
\frac{S_3}{C^3},\;\;\frac{S_4}{C^4}\Bigr)^{\mathsf T}\;:\;\C^5\to\C^5 .\;}
\]

\begin{theorem}\label{thm:F7}
$F_7$ is a polynomial map $\C^5\to\C^5$ with components of degrees
$7,86,89,92,95$; its Jacobian determinant is identically $119377$; and its
generic fiber consists of exactly $12$ points.
\end{theorem}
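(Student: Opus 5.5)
The argument follows the pattern of Theorem~\ref{thm:F6}, in three parts: polynomiality, the Jacobian, and the generic fiber count.

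\emph{Polynomiality and degrees.} By construction the scalings $w_1=\gamma u_1$, $w_2=\gamma^3u_2$, $w_3=\gamma^4u_3$ make $E_i=S_i/\gamma^{\,i}$ polynomial, because every monomial of $X_i$ has weight $\ge i$ and $\Delta_i=w_1^{i-1}$ has weight $i-1$. So $F_7$ is polynomial exactly when $x^{\,i}\mid E_i\circ\text{stage}$, which is the triangular system \eqref{eq:side}. I would check this order by order in $x$. At order $0$ it is the base condition $S(1,1,0,-1)=0$. At order $1$ it says the $v_1$-column lies in the common kernel of $\nabla E_2,\nabla E_3,\nabla E_4$. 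At order $2$ it is the kernel conditions on the $v_2$-column, together with the conditions at $v_1^2$ that the corrections in $\gamma$ and $u_1$ are chosen to kill. At order $3$ the $v_3$-column must be orthogonal to $\nabla E_4$, and the corrections at $v_1v_2$ and $v_1^3$ handle the rest. All of this is finite exact linear algebra. The component degrees $7,86,89,92,95$ would then be read off the expanded polynomials.

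\emph{Jacobian.} I would apply the chain factorization \eqref{eq:chain}. The monomials contribute $x^{10}$ and the scalings $\gamma^8$. Theorem~\ref{thm:curvered} with $c=1$ gives $\det J(F_0)=\gamma^2$, and the twist contributes $(\gamma x)^{-10}$. The stage contributes $\det L=119377$. This value is constant by Lemma~\ref{lem:stage}, because every correction monomial involves only $v_1,v_2$ and every correction vector lies in $\mathrm{span}\{(8,19,0,0),e_\gamma\}$, the span of the $v_3$- and $v_4$-columns. The product is $119377$; computing $\det L$ from the four columns is a one-line check.

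\emph{Fibers.} Since $H(w_1;\cdot)$ is triangular, with $w_3=H_4-w_1^4$ and $w_2=H_3-w_1^3-w_1^2w_3$, Proposition~\ref{prop:curvefib} applies. For a target with $v_1\neq0$, set $Y=(v_1v_2,v_1^2v_3,v_1^3v_4,v_1^4v_5)$. Then $h_3=Y_3-2w_1Y_2+w_1^2Y_1$ and $h_4=Y_4-3w_1^2Y_2+2w_1^3Y_1$. These give $w_3=h_4-w_1^4$, of degree $4$ in $w_1$, and $w_2=h_3-w_1^3-w_1^2w_3$, of degree $6$, whose leading term is $w_1^6$. Substituting into $G_2=w_2^2+\dots$ produces the fiber polynomial $R_Y$, whose top term $w_1^{12}$ comes from $w_2^2$. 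Its leading coefficient is therefore the constant $1$, so there are $12$ roots with multiplicity for every $Y$. By the triangular stage and $x=v_1/\gamma$, fiber points correspond bijectively to roots of $R_Y$ with $\gamma=Y_1-X_1\neq0$. It then suffices to find one witness target at which $R_Y$ is squarefree and coprime to $X_1(w_1,w_2(w_1),w_3(w_1))-Y_1$; both are open conditions, so the generic count is $12$.

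The main obstacle is this last step. Producing $R_Y$ is easy, but showing it is squarefree and coprime to $\gamma$ requires an exact gcd computation at a rational witness point, on polynomials of degree $12$ and higher with large coefficients. If the first target fails (for instance, if it forces $\gamma=0$ at a root), I would try further random rational targets.
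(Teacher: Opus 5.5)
Your proposal is correct and follows essentially the same route as the paper. Both obtain the Jacobian from the chain factorization, with $\det L=119377$ kept constant by Lemma~\ref{lem:stage}. For the fibers, both use the triangular system of Proposition~\ref{prop:curvefib}, which gives a monic degree-$12$ fiber polynomial $R_Y$, certified squarefree and coprime to $\gamma$ at one rational witness target (the paper uses $Y=(1,2,-1,3)$) and then extended by openness.
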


The determinant is the chain factorization
$x^{10}\cdot119377\cdot\gamma^{8}\cdot\gamma^{2}\cdot(\gamma x)^{-10}$, exactly
as for $F_6$. As before, the proof yields more: $F_7$ fixes the axis
$\{y=z_1=z_2=z_3=0\}$ pointwise, and for $C_0\neq0$ the fiber over
$(C_0,0,0,0,0)$ consists of exactly $7$ points. In particular $F_7$ is a
five-dimensional non-injective Keller map of geometric degree twelve, not
equivalent to $\Phi\times\mathrm{id}$ for any four-dimensional Keller map
$\Phi$ of geometric degree $\neq12$.

\begin{proof}[Proof sketch]
The Jacobian and divisibility statements are verified exactly as for $F_6$
(the constancy was also confirmed at $8$ random rational points). For the
fibers, Proposition~\ref{prop:curvefib} gives the values of $w_2$ and $w_3$
along the fiber in closed form: writing them as $p_1(w_1)$ and $p_2(w_1)$,
\[
p_2(w_1)=Y_4-3Y_2w_1^2+2Y_1w_1^3-w_1^4,\qquad
p_1(w_1)=Y_3-2Y_2w_1+Y_1w_1^2-w_1^3-w_1^2\,p_2(w_1),
\]
\[
R_Y(w_1)\;=\;p_1(w_1)^2+p_1(w_1)+w_1\,p_2(w_1)+w_1^3+Y_1w_1-Y_2 ,
\]
of degree exactly $12$ with constant leading coefficient $1$ (the identity
$R_Y=$ residual of the second sweep equation was checked at $150$ random
$(Y,w_1)$). At the rational witness target $Y=(1,2,-1,3)$ the polynomial
$R_Y$ is squarefree and coprime to $\gamma(w_1)=Y_1-X_1(w_1,p_1(w_1),p_2(w_1))$
--- both certified by exact univariate gcd computations --- so that fiber has
exactly $12$ points, and by openness of both conditions the generic count is
$12$. Over $Y=0$,
\[
R_0(w_1)\;=\;w_1^{\,5}\,(w_1-1)\,
\bigl(w_1^6+w_1^5+w_1^4-w_1^3-w_1^2-w_1+1\bigr),
\]
the quintuple root $w_1=0$ has $\gamma=0$ and is deleted by the twist, while
$\gamma(1)=1$ and $\gcd(\gamma,R_0/w_1^5)=1$ exactly, so all seven remaining
roots survive: the fiber over $(C_0,0,0,0,0)$ has exactly $7$ points,
including the visible fixed point $(C_0,0,0,0,0)$.
\end{proof}

The first component is
\[
F_{7,1}\;=\;x-61x^2y+9012x^3y^2-\tfrac{35823670}{19}x^4y^3
+\tfrac{238}{19}x^4yz_1+8x^4z_2+x^5z_3 ,
\]
and the remaining components have between $14{,}000$ and $26{,}000$ terms;
they are reproduced, with all verification scripts, in the ancillary files.
Together, $F_6$ and $F_7$ illustrate the two mechanisms available in dimension
five: a mixed direction field on its flexible bottom branch, and the
curve-type field with transverse Keller-type data.

\section{Concluding remarks}\label{sec:conclusion}

The six maps $F$, $G$, $F_4$, $F_5$, $F_6$, $F_7$ studied here are instances of one principle: a
classical enveloping construction (tangent lines of a curve, a tangent direction
field on a surface) supplies an unramified-off-a-divisor, many-to-one polynomial
map whose Jacobian is essentially a coordinate; a monomial twist then launders
that coordinate into the constant Jacobian, at the cost of deleting the
ramification divisor --- so the colliding sheets escape to infinity rather than
merging. The fiber stratifications are correspondingly classical: contact orders
of tangent objects through the target point. For $F$ this gives fiber sizes
$\{3,1,0\}$ (Theorem~\ref{thm:strat3}); for $G$, Theorem~\ref{thm:fibG} proves
the sizes are exactly $\{4,3,2,1,0\}$ --- the value $2$ from double contact at
smooth curve points, $1$ at the cusps, $0$ at the node, and, unexpectedly, $3$
over the hyperplane where the shadow map degenerates; for $F_4$ and $F_5$ the
discriminantal geometry of the swept families is two-dimensional and the
complete stratifications are open, while for $F_6$ and $F_7$ the univariate
fiber polynomials $R_Y$ from the proofs of Theorems~\ref{thm:F6}
and~\ref{thm:F7} reduce the
stratifications to discriminants of a sextic and a degree-twelve polynomial in
the target coordinates, which we have not yet unwound. We will
return to the stratifications of $F_4$, $F_5$, $F_6$ and $F_7$, and to certified
Gr\"obner bases for their graph ideals in favourable orders, in a subsequent
version of this paper. The $M$-branch of \S\ref{subsec:specII} is largely
unexplored: every polynomial solution $(\rho,\lambda)$ of the linear equation
\eqref{eq:wave} is a candidate surface, and the fiber geometry of the resulting
degree-ten (and higher) maps remains to be understood.

Several questions suggest themselves. Which fiber-size sets are realizable by
Keller maps in dimension $n$? (For $F$ no fiber has size one less than the
generic size, yet $G$ does attain size $4-1=3$ --- over the degenerate hyperplane
rather than over a contact stratum --- so the gap phenomenon is not universal and
seems to be contact-geometric in nature.) Do
non-graph surfaces in $\C^4$ admit hyperbolic sweep normalizations, which would
give four-dimensional examples with two independent escape divisors? Is the
middle branch of the mixed direction field $(1,w_1,w_1^2,w_2)^{\mathsf T}$ genuinely
rigid --- that is, does the master equation \eqref{eq:master5} admit any
polynomial solution that is not a suspension of a four-dimensional sweep
(Theorem~\ref{thm:collapse5} and Problem~\ref{prob:rigid5})? More generally, for
which pairs (direction field, branch index) in dimension $n$ is the branch
realizable by irreducibly $n$-dimensional data? And what is
the minimal geometric degree of a Keller counterexample in dimension $n\ge4$?

\subsection*{Acknowledgements}
The computations reported here were carried out in exact rational arithmetic;
scripts are available from the author.

\appendix

\section{Graph ideals, Gr\"obner bases, and exact fiber sizes}\label{app:fibers}

For a polynomial map $\Phi=(\varphi_1,\dots,\varphi_n)^{\mathsf T}\colon\C^n\to\C^n$, the
\emph{graph ideal} is
\[
J_\Phi\;=\;\bigl(\varphi_1-v_1,\dots,\varphi_n-v_n\bigr)\;\subseteq\;
\Q[x_1,\dots,x_n,v_1,\dots,v_n],
\]
the kernel of the substitution homomorphism fixing the $x_i$ and sending
$v_i\mapsto\varphi_i$. Specializing $v$ at a point $v^0$ maps $J_\Phi$ into the
fiber ideal $(\varphi_1-v_1^0,\dots,\varphi_n-v_n^0)$, so any element of
$J_\Phi$ yields constraints valid on every fiber; and since our maps are \'etale,
all fibers are reduced, so ``fiber size'' is unambiguous. All computations below
are in exact rational arithmetic and were certified as indicated.

\subsection{The map $F$}\label{app:F}

For Alp\"oge's map $F$ of \S\ref{sec:first} the reduced lexicographic Gr\"obner
basis of $J_F$ with $z>y>x>v_1>v_2>v_3$ is remarkably small: it consists of six
polynomials $h_1,\dots,h_6$ (integer-scaled below; divide by the leading
coefficients $27,54,6,24,4,8$ for the monic reduced basis). Terms are grouped
by the monomials in $(z,y,x)$, in decreasing lexicographic order, so that each
coefficient is a polynomial in $v=(v_1,v_2,v_3)$:
{\small
\begin{align*}
h_1 ={}& \bigl(27v_1^2v_3^2-18v_1v_2v_3+16v_1+v_2^3v_3-v_2^2\bigr)x^3+\bigl(-3v_2v_3+4\bigr)x-2v_3\\[2pt]
h_2 ={}& \bigl(54v_1v_3^2-18v_2v_3+16\bigr)y\\
&+\bigl(729v_1^3v_3^3-567v_1^2v_2v_3^2+432v_1^2v_3+27v_1v_2^3v_3^2+27v_1v_2^2v_3-48v_1v_2-3v_2^4v_3+3v_2^3\bigr)x^2\\
&+\bigl(-162v_1^2v_3^2+108v_1v_2v_3-96v_1-6v_2^3v_3+6v_2^2\bigr)x\\
&-81v_1v_2v_3^2+72v_1v_3+15v_2^2v_3-16v_2\\[2pt]
h_3 ={}& \bigl(6v_2v_3-8\bigr)yx+6v_3y+\bigl(81v_1^2v_3^2-54v_1v_2v_3+48v_1+3v_2^3v_3-3v_2^2\bigr)x^2\\
&+\bigl(-18v_1v_3+2v_2\bigr)x-3v_2v_3\\[2pt]
h_4 ={}& \bigl(24v_1-2v_2^2\bigr)yx+\bigl(-18v_1v_3+4v_2\bigr)y\\
&+\bigl(-243v_1^3v_3^2+162v_1^2v_2v_3-144v_1^2-9v_1v_2^3v_3+9v_1v_2^2\bigr)x^2\\
&+\bigl(54v_1^2v_3-30v_1v_2+2v_2^3\bigr)x+27v_1v_2v_3-4v_2^2\\[2pt]
h_5 ={}& 4y^2+\bigl(-18v_1v_3-2v_2\bigr)y\\
&+\bigl(-243v_1^3v_3^2+162v_1^2v_2v_3-144v_1^2-9v_1v_2^3v_3+9v_1v_2^2\bigr)x^2\\
&+27v_1v_2v_3-2v_2^2\\[2pt]
h_6 ={}& 8z+\bigl(-108v_1v_3+12v_2\bigr)y\\
&+\bigl(-1458v_1^3v_3^2+243v_1^2v_2^2v_3^2+972v_1^2v_2v_3-864v_1^2-216v_1v_2^3v_3+198v_1v_2^2+9v_2^5v_3-9v_2^4\bigr)x^2\\
&+\bigl(-162v_1^2v_2v_3^2+108v_1v_2^2v_3-96v_1v_2-6v_2^4v_3+6v_2^3\bigr)x\\
&+108v_1^2v_3^2+90v_1v_2v_3-8v_1-23v_2^3v_3+20v_2^2
\end{align*}
}
The basis was certified independently of the algorithm that produced it: all
fifteen $S$-polynomials reduce to zero modulo the set; each $h_i$ vanishes
identically under $v_j\mapsto f_j$; each $f_i-v_i$ reduces to zero modulo the
set; and an independent computation over $\mathbb{F}_p$, $p=2147483629$,
reproduces the basis modulo $p$. The element $h_1$ generates the elimination
ideal $J_F\cap\Q[x,v]$: written by its $(z,y,x)$-grouping,
$h_1=c_3x^3+(4-3v_2v_3)x-2v_3$, and this is the elimination cubic referred to
in Theorem~\ref{thm:strat3}(1); the identities
$\operatorname{disc}_x(h_1)=-c_3a_2^2$ and $a_2^2-4(4-3v_2v_3)^3=108v_3^2c_3$
then drive the whole stratification, whose proof is completed by three lifting
certificates: exact identities $F(\text{candidate point})=v$ verified in
$\Q(v)[x]/(h_1)$, over the function field of $\{a_2=0\}$, and in
$\Q(v_2,v_3)[v_1]/(c_3)$ respectively. This yields
Theorem~\ref{thm:strat3}: fiber sizes $3,1,0$ across the strata, and never
$2$.

\subsection{The map $G$}\label{app:G}

For the degree-four map $G$ of \S\ref{sec:deg4} the behaviour of the
lexicographic basis changes dramatically. The computation, first abandoned
after modular experiments showed staircase elements of $v_1$-degree exceeding
$20$, has since been completed (Singular via Sage): the reduced lexicographic
basis of $J_G$, in the order $x>y>z>v_1>v_2>v_3$, has $174$ elements ---
against six for $F$ --- an interesting contrast, and a warning that the
tangent-sweep counterexamples need not have small Gr\"obner bases. Its profile:
all elements are monic; $20$ leading terms are $x$-headed of $x$-degree
$\le2$ (among them the reconstruction relations with leading terms $xy^2$ and
$xz$), $153$ are $y$-headed of $y$-degree $\le3$, and exactly one element is
free of both $x$ and $y$ --- the elimination quartic $Z_G$ below. Total
degrees range from $12$ to $93$, with exactly one element in each degree from
$61$ to $93$: a staircase ladder of $33$ elements of up to $2{,}787$ terms
(the basis occupies $139$\,MB in plain text), which is where the computation
spends its effort. The fiber analysis, however, does not require the full
basis. Write
$\sigma(v)=(X,Y)^{\mathsf T}:=(v_1v_2,\,v_1^2v_3)^{\mathsf T}$ for the shadow map ($X,Y$ are the values of
$p(w)+2\gamma$ and $q(w)+\gamma w$). The elimination ideal $J_G\cap\Q[x,v]$ is
principal, generated by the quartic (computed as a resultant, and monic in $w$
after the substitution $x=2v_1/(X-p(w))$):
\[
R_G(x;v)\;=\;E(X,Y)\,x^4\;-\;24v_1^2\bigl(3X^2+8X+16Y-24\bigr)x^2
\;-\;64v_1^3(X+4)\,x\;-\;16v_1^4 ,
\]
where
\[
E(X,Y)=27X^4+80X^3-672X^2Y+1536XY^2-512Y^3-144X^2-2304XY+4608Y^2+3456Y
\]
is the implicit equation of the swept quartic curve $\mathcal{K}_G=(p,q)$. As for
$F$, the leading coefficient is the curve equation:
$\operatorname{Res}_w\bigl(W,\,X-p(w)\bigr)=-\tfrac{1}{64}E$ and
$\operatorname{disc}_w(W)=-\tfrac{1}{64}E$ for the tangency quartic
$W=\tfrac14w^4-2w^3+3w^2-Xw+2Y$. The curve $\mathcal{K}_G$ has its node at
$(X,Y)=(-4,\tfrac12)$ --- where the tangency quartic becomes the perfect square
$W=\tfrac14(w^2-4w-2)^2$ --- and its two cusps at the parameters $w=2\pm\sqrt2$,
i.e.\ at $(X,Y)=(-4\mp4\sqrt2,\,-\tfrac{11}2\mp4\sqrt2)$.

The full basis completes this picture from the target side. Its unique
element free of $x$ and $y$ generates the elimination ideal
$J_G\cap\Q[z,v]$: a quartic in $z$ ($114$ terms, coefficients of degree up to
$21$ in $v$),
\[
Z_G(z;v)\;=\;v_1^{\,2}\,z^4\;+\;c_3(v)\,z^3\;+\;c_2(v)\,z^2
\;+\;c_1(v)\,z\;+\;c_0(v),
\]
\[
c_3(v)=-10v_1^3v_2^3+288v_1^3v_2v_3-450v_1^2v_2^2+1504v_1^2v_3
-3648v_1v_2-8208 .
\]
Three exact certificates tie $Z_G$ to the fiber theory below. First,
substituting the reconstruction formula $z=\gamma(6\gamma-4w-\gamma^2)/v_1^2$
of Theorem~\ref{thm:fibG} gives $Z_G\bigl(z(w);v\bigr)\equiv0 \pmod W$
identically (verified in exact arithmetic at random rational targets), so the
four roots of $Z_G$ are precisely the values of $z$ on the four sheets ---
the $z$-coordinate analogue of $R_G$, of degree equal to the geometric
degree. Second, the leading coefficient is exactly $v_1^{\,2}$, while
$c_3(0,v_2,v_3)=-8208\neq0$: over the hyperplane $v_1=0$ the quartic drops to
degree exactly $3$, matching the three-point fibers of
Theorem~\ref{thm:fibG} --- where $R_G$ sees the non-properness locus through
its leading coefficient $E(\sigma(v))$ (the curve side), $Z_G$ sees it
through $v_1^{\,2}$ (the hyperplane side). Third, the discriminant of $Z_G$
with respect to $z$ vanishes at targets with $\sigma(v)\in\mathcal K_G$ and
not at generic targets (verified exactly at sample points of each stratum):
the contact strata of the dual curve are read off the collisions of the
$z$-roots.

\begin{theorem}[Fiber sizes of $G$]\label{thm:fibG}
Every fiber of $G$ is reduced, and its cardinality is determined as follows.
For $v_1\ne0$, fiber points correspond bijectively to roots $w$ of $W$ with
$\gamma:=\tfrac12(X-p(w))\ne0$, via
\[
x=\frac{v_1}{\gamma},\qquad y=\frac{w-\gamma}{v_1},\qquad
z=\frac{\gamma\,(6\gamma-4w-\gamma^2)}{v_1^2},
\]
and these formulas satisfy $G(x,y,z)=v$ identically modulo $W$. Hence for
$v_1\neq0$ the fiber has
\[
\begin{cases}
4\ \text{points} & \text{if }\sigma(v)\notin\mathcal{K}_G,\\
2\ \text{points} & \text{if }\sigma(v)\in\mathcal K_G\text{ is a smooth, non-node point},\\
1\ \text{point} & \text{if }\sigma(v)\text{ is a cusp},\\
0\ \text{points} & \text{if }\sigma(v)\text{ is the node}.
\end{cases}
\]
For $v_1=0$ the quartic $R_G$ degenerates identically and the fiber splits into
the branch $x=0$, which contributes the single point
$\bigl(0,\tfrac{v_2}2,z^*\bigr)$ with $z^*$ determined linearly, and the branch
$\gamma=0$, which contributes the two points with $x^2=4/(v_2^2-24v_3)$; hence
the fiber has $3$ points if $v_2^2\ne24v_3$ and $1$ point if $v_2^2=24v_3$. In
particular $G$ attains every fiber size in $\{4,3,2,1,0\}$.
\end{theorem}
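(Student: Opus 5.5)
The plan is to work through the composition diagram of \S\ref{subsec:twist}, which is bijective away from $x=0$ and $\gamma=0$. This reduces the fiber count over $v_1\neq0$ to Proposition~\ref{prop:fibsizes}. The hyperplane $v_1=0$ is then handled by a direct computation on the two branches $x=0$ and $\gamma=0$ of $C=\gamma x$. Reducedness of every fiber is automatic: $\det J(G)\equiv2$ (Theorem~\ref{thm:deg4}) makes $G$ \'etale, so every fiber is a reduced zero-dimensional scheme.

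\emph{Step 1 ($v_1\neq0$).} Suppose $G(x,y,z)=v$ with $v_1=\gamma x\neq0$. Then $x\neq0$ and $\gamma\neq0$, and the sweep point $(X,Y)=(v_1v_2,v_1^2v_3)=\sigma(v)$ equals $S(\gamma,w)$ with $w=\gamma u=\gamma(1+xy)$. By Lemma~\ref{lem:fibroots}, $w$ is a root of $W_{X,Y}$ and $\gamma=\tfrac12(X-p(w))=W'(w)$. Conversely, take a root $w$ of $W$ with $\gamma\neq0$. I will invert the diagram: $x=v_1/\gamma$ and $xy=u-1=w/\gamma-1$, which gives $y=(w-\gamma)/v_1$; then $x^2z=2-\gamma-4xy$ (from $\gamma=2-4xy-x^2z$), which simplifies to $z=\gamma(6\gamma-4w-\gamma^2)/v_1^2$. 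I check $G(x,y,z)=v$ modulo $W$ by substituting into the explicit divisibility factorizations recorded after Theorem~\ref{thm:deg4}. The first two components hold by construction, and the third reduces to $W(w)=0$. Injectivity is clear: distinct roots $w$ give distinct points because $w=\gamma u$ is recovered from $(x,y)$.

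So the fiber has one point for each distinct root of $W$ with $W'\neq0$, that is, for each simple root. By the contact analysis preceding Proposition~\ref{prop:fibsizes}, multiple roots are exactly the points of contact, and $W$ has degree $4$ with constant leading coefficient. This gives the counts: $4$ off $\mathcal K_G$; $4-2=2$ at a smooth non-node point, where one double root is deleted; $1$ at a cusp, provided the cusp root has multiplicity exactly $3$, which I must check; and $0$ at the node, where $W=\tfrac14(w^2-4w-2)^2$ has two double roots. For the cusp I will verify that $W'''(w_0)=-\tfrac12p''(w_0)\neq0$ at $w_0=2\pm\sqrt2$: since $p''=6w-12$, this is $\pm6\sqrt2\neq0$. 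I also need the node to be the only point of $\mathcal K_G$ lying on two branches. This follows from the genus count $\delta+\kappa=3$ together with the explicit coordinates of the singular points.

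\emph{Step 2 ($v_1=0$).} Here $\gamma x=0$, and I split into two branches. On the branch $x=0$ we have $\gamma=2$. The second component at $x=0$ is $g_2=2y$, giving $y=v_2/2$. The third component at $x=0$ is $\tfrac92y^2+\tfrac12z$, which is linear in $z$ and so yields a unique $z^*$. On the branch $\gamma=0$ with $x\neq0$, I have $z=(2-4xy)/x^2$. The second and third components become polynomials in $x$ and $xy$, and I will use the factored forms $p(w)+2\gamma=\gamma(\cdots)$ to read them off at $\gamma=0$: $g_2=(6u+2)/x$ and $g_3=(\tfrac32u^2+u)/x^2$ with $u=1+xy$. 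Eliminating $u$ gives $u=(v_2x-2)/6$, and substituting into $v_3x^2=\tfrac32u^2+u$ yields the quadratic $(24v_3-v_2^2)x^2=-4$, i.e.\ $x^2=4/(v_2^2-24v_3)$. This produces two points, or none when $v_2^2=24v_3$. The two branches are disjoint since $x=0$ forces $\gamma=2$. Hence the fiber has $3$ points, or $1$ point when $v_2^2=24v_3$.

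The main obstacle is the careful bookkeeping in Step 1. I must confirm exact contact multiplicities, and that no point of $\mathcal K_G$ other than the node and the two cusps produces a root of multiplicity $\ge3$ or a second double root. The latter amounts to showing that the bitangent and flex loci reduce to the node, since the curve is inflection-free in the normal form. Concretely, I will factor $\operatorname{disc}$ of $W/(w-w_0)^2$ along the parametrization and check that its only zeros occur at the node parameters and the cusp parameters. Finally, exhibiting representative targets for sizes $4,3,2,1,0$ shows that every value in $\{4,3,2,1,0\}$ is attained.
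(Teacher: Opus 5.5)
Your proposal is correct and follows essentially the paper's route. The reconstruction formulas give a bijection between fiber points over $v_1\neq0$ and the simple roots of the tangency quartic (since $\gamma=W'$). The counts $4,2,1,0$ then come from the contact bookkeeping of Proposition~\ref{prop:fibsizes}, and the hyperplane $v_1=0$ is split into the branches $x=0$ and $\gamma=0$ just as in the statement. Your explicit multiplicity checks, which are sharper than the paper's sample-point verification, do go through. First, $W'''(w_0)=-\tfrac12p''(w_0)\neq0$ at the cusp parameters $w_0=2\pm\sqrt2$. Second, the residual quadratic $W/(w-w_0)^2$ has discriminant proportional to $w_0^2-4w_0-2$, which vanishes only at the node parameters $2\pm\sqrt6$.
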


All counts were verified exactly at rational (or explicitly algebraic) sample
points of each stratum, including the exact fiber point
$(0,\tfrac12,-\tfrac14)$ over $v=(0,1,1)$ and the identity
$W=\tfrac14(w^2-4w-2)^2$ at the node. The completed Gr\"obner basis provides
an independent certification: all $174$ elements vanish on the graph of $G$
(checked in modular arithmetic at random rational points of the graph), the
leading terms are pairwise non-divisible, and the quartic $Z_G$ re-derives
the counts over both strata of the target. Note the mechanism of the empty fiber: at
the node \emph{both} branches of the curve impose a double contact, so all four
sheets escape simultaneously --- the analogue for $G$ of the cusp mechanism in
$F$. The value $3$, impossible for $F$, appears over the hyperplane $v_1=0$,
where the shadow map degenerates.

\subsection{The maps $F_4$--$F_7$}\label{app:F4}

For the higher-dimensional maps the graph ideals live in $8$ or $10$
variables, with generators of degree up to $21$, $16$, $44$ and $95$
respectively, and their Gr\"obner bases are out of reach of our implementation
--- the growth from six elements for $F$ to $174$ for $G$ suggests they may be
out of reach altogether. The fiber theory, however, does not need them: for
$F_4$ the fibers are governed by the tangency quintic displayed in the proof
of Theorem~\ref{thm:F4}, whose leading coefficient is the \emph{constant}
$\tfrac29$ (so the number of tangency parameters never drops and all
degenerations pass through $\gamma=0$); for $F_5$ by a resultant of degree
$10$ (proof of Theorem~\ref{thm:F5}); and for $F_6$ and $F_7$ by the
univariate fiber polynomials $R_Y$, of degrees $6$ and $12$ with constant
leading coefficients, derived in closed form in the proofs of
Theorems~\ref{thm:F6} and~\ref{thm:F7}. All four generic counts are
established there by exact elimination and gcd certificates at rational
targets. The complete stratifications --- for $F_4$ a two-dimensional
family of plane curves, for $F_6$ and $F_7$ the discriminants of $R_Y$ ---
are left to a subsequent version.

\end{document}